\documentclass[12pt,a4paper,twoside,reqno]{amsart}
\usepackage[a4paper,top=3cm,bottom=3cm,inner=2.5cm,outer=2.5cm]{geometry}

\usepackage[utf8]{inputenc}
\usepackage{amsmath,amssymb,amsthm,amsfonts}
\usepackage[english]{babel}
\usepackage{multirow,enumitem,array,longtable}
\usepackage{xparse}
\usepackage{pst-node,placeins,xspace,fix-cm}
\usepackage{tikz-cd}
\usepackage{accents}
\usepackage[nomessages]{fp}
\usepackage[colorlinks,linkcolor=blue,citecolor=violet,urlcolor=darkgray,final,hyperindex,linktoc=page,hyperfootnotes=true]{hyperref}
\newcolumntype{F}{>{$}c<{\hspace{-0.9ex}$}}
\newcolumntype{:}{>{$}m{0.8ex}<{$}}
\newcolumntype{R}{>{$}r<{$}}
\newcolumntype{C}{>{$}c<{$}}
\newcolumntype{L}{>{$}l<{$}}
\newcolumntype{N}{@{}>{$}l<{$}}
\setlength{\tabcolsep}{3.5pt}

\newlength\horspace
\setlength{\horspace}{0.1ex}
\newcommand{\h}[1][1.0]{\hspace{#1\horspace}}
\newlength\verspace
\setlength{\verspace}{1mm}
\renewcommand{\v}[1][1.0]{\vspace{#1\verspace}\xspace}
\newlength\negverspace
\setlength{\negverspace}{-0.5mm}

\makeatletter
\newcommand\semiHuge{\@setfontsize\semiHuge{22.72}{27.38}}
\newcommand\semiLarge{\@setfontsize\semiLarge{12}{14}}
\makeatother
\tikzset{iso/.style={draw=none,every to/.append style={edge node={node [sloped, allow upside down, auto=false]{$\cong$}}}}}
\tikzset{adjunction/.style={draw=none,every to/.append style={edge node={node [sloped, allow upside down, auto=false]{$\dashv$}}}}}
\tikzset{simeq/.style={draw=none,every to/.append style={edge node={node [sloped, allow upside down, auto=false]{$\simeq$}}}}}
\tikzset{simeqS/.style={draw=none,every to/.append style={edge node={node [sloped, allow upside down, auto=false]{$\raisebox{0.8em}{$\simeq$}$}}}}}
\tikzset{aiso/.style={simeqS,preaction={draw,->}}}
\tikzset{dotdot/.style={dash pattern=on 0.25ex off 0.2ex, dash phase=0ex}}
\tikzset{RightA/.style={double distance=3.5pt,>={Implies},->},%
	triple/.style={-,preaction={draw,RightA}},%
	quadruple/.style={preaction={draw,RightA,shorten >=0pt},shorten >=1pt,-,double,double distance=0.2pt}}


\newtheorem{teor}{Theorem}[section]
\newtheorem{coroll}[teor]{Corollary}

\newtheorem{prop}[teor]{Proposition}

\theoremstyle{definition}
\newtheorem{defne}[teor]{Definition}

\newtheorem{rem}[teor]{Remark}
\newtheorem{exampl}[teor]{Example}

\newtheorem{cons}[teor]{Construction}
\newtheorem{notation}[teor]{Notation}

\def\nameit#1{\textrm{#1}~}
\def\defx{\nameit{Definition}}
\def\thex{\nameit{Theorem}}
\def\prox{\nameit{Proposition}}
\def\conx{\nameit{Construction}}
\def\remx{\nameit{Remark}}

\def\corx{\nameit{Corollary}}

\def\exax{\nameit{Example}}


\NewDocumentEnvironment{cd}{s O{7} O{7} b}{%
	\IfBooleanF{#1}{\begin{equation*}}\begin{tikzcd}[row sep=#2ex,column sep=#3ex,ampersand replacement=\&]
			#4
		\end{tikzcd}\IfBooleanF{#1}{\end{equation*}}\ignorespacesafterend}{}

\newenvironment{enum}{\begin{enumerate}[label=$($\hspace{0.12ex}\roman*\hspace{0.075ex}$)$]}{\end{enumerate}}
\newenvironment{enumT}{\begin{enumerate}[label=$($\hspace{-0.1ex}\roman*\hspace{0.13ex}$)$]}{\end{enumerate}}
\newenvironment{fun}{\[\begin{tabular}{F:RCL}}{\end{tabular}\]\ignorespacesafterend}
\newenvironment{eqD}[1]{\begin{equation}\label{#1}}{\end{equation}\ignorespacesafterend}
\newenvironment{eqD*}{\begin{equation*}}{\end{equation*}\ignorespacesafterend}


\def\:{\colon}
\providecommand\ordinarycolon{:}
\def\vcentcolon{\mathrel{\mathop\ordinarycolon}}
\newcommand{\deq}{\mathrel{\vcentcolon\mkern-1.2mu=}}
\def\phi{\varphi}
\def\eps{\varepsilon}
\def\O{\ensuremath{\Omega}\xspace}
\newcommand{\emp}{\emptyset}
\newcommand{\romanuppercase}[1]{\uppercase\expandafter{\romannumeral #1\relax}}

\def\set#1#2{\ensuremath{\left\{{#1}\left.\right|\,{#2}\right\}}}

\newcommand{\p}[1]{\big(\mkern1mu{#1}\mkern1mu\big)}
\newcommand{\pteor}[1]{\textup{(}{#1}\kern2pt\textup{)}}

\def\dfn#1{{\itshape #1}}
\def\predfn#1{{\itshape #1}}
\newcommand{\refs}[1]{\textup{(}\ref{#1}\textup{)}}

\newcommand{\scaleu}[2][1.2]{{\scalebox{#1}{$#2$}}}

\newcommand{\ov}[1]{\overline{#1}}
\def\t#1{\widetilde{#1}} 
\def\b{_{\bullet}}

\def\c{\circ}
\newcommand{\cont}{\subseteq}

\newcommand{\st}{^{\ast}}
\newcommand{\stb}{_{\ast}}

\newcommand{\slant}[2]{{\raisebox{.05em}{$#1$}\mkern-1mu\left/\raisebox{-.15em}{$#2$}\right.}}
\newcommand{\sliceslant}[2]{{\raisebox{.1em}{$#1$}\mkern-1mu\left/\raisebox{-.25em}{$#2$}\right.}}

\newcommand{\laxsliceslant}[2]{{\raisebox{.1em}{$#1$}\mkern-1mu\left/_{\mkern-4.1mu\operatorname{lax}}\right.\raisebox{-.25em}{$#2$}}}
\newcommand{\oplaxsliceslant}[2]{{\raisebox{.1em}{$#1$}\mkern-1mu\left/_{\mkern-4.1mu\operatorname{oplax}}\right.\raisebox{-.25em}{$#2$}}}

\makeatletter
\newcommand{\douwidehat}[2]{%
	\sbox0{$\m@th#1\widehat{\hphantom{#2}}$}%
	\sbox2{$\m@th#1x$}
	\sbox4{$\m@th#1#2$}
	\dimen0=\ht0
	\advance\dimen0 -.8\ht2
	\dimen2=\dp4
	\rlap{%
		\raisebox{\dimexpr\dimen0-\dimen2}{%
			\scalebox{1}[-1]{\box0}}}{#2}}
\makeatother


\DeclareFontFamily{OT1}{pzc}{}
\DeclareFontShape{OT1}{pzc}{m}{it}{<->s*[1.19]pzcmi7t}{}
\DeclareMathAlphabet{\mathpzc}{OT1}{pzc}{m}{it}

\DeclareFontFamily{U}{dutchcal}{\skewchar\font=45}
\DeclareFontShape{U}{dutchcal}{m}{n}{<->s*[1.05] dutchcal-r}{}
\DeclareMathAlphabet{\mathlcal}{U}{dutchcal}{m}{n}
\newcommand{\catfont}[1]{\ensuremath{\mathpzc{#1}}\xspace}

\newcommand{\A}{\catfont{A}}
\newcommand{\B}{\catfont{B}}
\newcommand{\C}{\catfont{C}}
\newcommand{\D}{\catfont{D}}
\newcommand{\E}{\catfont{E}}
\newcommand{\F}{\catfont{F}}
\newcommand{\K}{\catfont{K}}
\newcommand{\M}{\catfont{M}}

\newcommand{\V}{\catfont{V}}

\newcommand{\U}{\catfont{U}}
\newcommand{\W}{\catfont{W}}
\def\L{\catfont{L}}

\newcommand{\Y}{\ensuremath{\mathcal{Y}}\xspace}
\newcommand{\I}{\ensuremath{\mathcal{I}}\xspace}
\newcommand{\1}{\catfont{1}}
\newcommand{\2}{\catfont{2}}

\newcommand{\Set}{\catfont{Set}}

\newcommand{\Cat}{\catfont{Cat}}
\newcommand{\CAT}{\catfont{Cat}}
\newcommand{\CATlarge}{\catfont{CAT}}
\newcommand{\twoCAT}{2\h[2]\mbox{-}\CAT}

\NewDocumentCommand{\Fib}{t' t" t+ t? O{n} O{n} o}{
	\ensuremath{\ifx#5t{2\mbox{-}\Set\mbox{-}}\fi\catfont{\ifx#5d{D}\else{\ifx#5o{Op}\else{\ifx#5b{DOp}\else{\ifx#5t{Op}\else{\ifx#5c{Cl\h[-3]}\else{\ifx#5s{Sp}\fi}\fi}\fi}\fi}\fi}\fi{Fib}}\IfBooleanT{#3}{^{\h[3.7]\opn{s}\h[-3.2]}}\IfBooleanT{#4}{^{\h[3.7]\opn{P}\h[-3,7]}}{\IfBooleanTF{#1}{_{\h[0.4]\opn{cart}\ifx#6n{}\else{\h[-1.4],\h[0.4]{#6}}\fi}}{\IfBooleanTF{#2}{_{\h[0.4]\opn{clov}\ifx#6n{}\else{\h[-1.4],\h[0.4]{#6}}\fi}}{\ifx#6n{}\else{_{\h[0.4]{#6}}}\fi}}}\IfNoValueF{#7}{\h[-1]\left({#7}\right)}}
}

\NewDocumentCommand{\Sh}{o m}{
	\ensuremath{\catfont{Sh}\hspace{-0.15ex}\left({#2}\IfNoValueF{#1}{,{#1}}\right)}
}

\NewDocumentCommand{\St}{t+ o m}{
	\ensuremath{\catfont{St}\IfBooleanT{#1}{_{\opn{Ps}}}\hspace{-0.15ex}\left({#3}\IfNoValueF{#2}{,{#2}}\right)}
}

\NewDocumentCommand{\Alg}{t+ t' m}{
	\ensuremath{\IfBooleanT{#1}{\catfont{Ps}\mbox{-}}{#3}\mbox{-}\catfont{\IfBooleanT{#2}{Co}Alg}}
}

\newcommand{\slice}[2]{\sliceslant{#1}{#2}}

\newcommand{\laxslice}[2]{\laxsliceslant{#1}{#2}}
\newcommand{\oplaxslice}[2]{\oplaxsliceslant{#1}{#2}}


\newcommand{\Hom}[3][]{\operatorname{Hom}_{\mkern1mu #1}\mkern-1.5mu\left({#2},{#3}\right)}
\newcommand{\HomC}[3]{{#1}\left({#2},\h[1]{#3}\right)}
\newcommand{\m}[2]{\ensuremath{\left[#1,#2\right]}\xspace}

\newcommand{\Psm}[2]{\ensuremath{\opn{Ps}\left[#1,#2\right]}\xspace}

\newcommand{\moplaxn}[2]{\ensuremath{\left[#1,#2\right]_{\oplaxn}}\xspace}

\newcommand{\Sub}[1]{\operatorname{Sub}\hspace{-0.1ex}\left({#1}\right)}

\newcommand{\opn}[1]{\operatorname{#1}}
\newcommand{\y}[1]{\ensuremath{\operatorname{y}\hspace{-0.2ex}\left({#1}\right)}}

\DeclareMathOperator{\yy}{y}
\newcommand{\yyop}{\yy\op}
\newcommand{\id}[1]{\operatorname{id}_{#1}}
\newcommand{\Id}[1]{\operatorname{Id}_{#1}}

\newcommand{\op}{\ensuremath{^{\operatorname{op}}}}

\newcommand{\restr}[2]{{\left.\kern-\nulldelimiterspace {#1}\vphantom{\big|} \right|_{#2}}}
\newcommand{\ceil}[1]{\lceil #1 \rceil}
\newcommand{\dom}{\operatorname{dom}}

\newcommand{\pr}[1]{\operatorname{pr}_{#1}}

\newcommand{\Match}[2][]{\operatorname{Match}^{#1}\mkern-1.3mu\left(#2\right)}

\newcommand{\Union}[2]{\underset{#1}{\scaleu[1.27]{\bigcup}}\hspace{0.1ex}{#2}}

\DeclareMathOperator{\colim}{colim}

\DeclareMathOperator{\oplaxn}{oplax^{cart}}

\DeclareMathOperator{\pseudo}{pseudo}
\newcommand{\wlim}[2]{{\lim}^{#1}\h{#2}}
\newcommand{\wcolim}[2]{{\colim}^{#1}\h{#2}}

\newcommand{\oplaxncolim}[1]{\oplaxn\mbox{-}\h[1.5]\wcolim{\Delta 1}{#1}}

\newcommand{\G}[1]{\catfont{G}_{#1}}
\newcommand{\Gg}[1]{\catfont{\widehat{G}}_{#1}}

\newcommand{\dG}[2][]{\operatorname{G}_{#1}^{#2}}

\newcommand{\Int}[1]{\ensuremath{\int \hspace{-0.35ex} #1}}
\newcommand{\Intdiag}[1]{\ensuremath{\scaleu{\int} \hspace{-0.15ex} #1}}

\newcommand{\Groth}[1]{\Int{#1}}
\newcommand{\Grothdiag}[1]{\Intdiag{#1}}

\newcommand{\groth}[1]{\mathcal{G}\mkern-1.4mu\left(#1\right)}


\newcommand{\too}{\longrightarrow}
\newcommand{\mto}{\mapsto}
\newcommand{\mtoo}{\longmapsto}
\newcommand{\ito}{\hookrightarrow}

\makeatletter
\newcommand{\ar}[2][]{\xrightarrow[#1]{#2}}
\def\xlongrightarrowfill@{\arrowfill@\relbar\relbar\longrightarrow}
\newcommand{\arr}[2][]{%
	\ext@arrow 0099\xlongrightarrowfill@{#1}{#2}}
\newcommand{\aarr}[2][]{%
	\ext@arrow 0099\xlongrightarrowfill@{#1}{#2}} 
\newcommand{\arrr}[2]{%
	\begin{tikzcd}[column sep=#2ex,ampersand replacement=\&]
		\hspace{-0.75ex}\arrow[r,"{#1}"]\&\hspace{-0.75ex}
\end{tikzcd}}
\newcommand{\aR}[2][]{%
	\ext@arrow 0055{\Rightarrowfill@}{#1}{#2}}
\def\xLongrightarrowfill@{\arrowfill@\Relbar\Relbar\Longrightarrow}
\newcommand{\aRR}[2][]{%
	\ext@arrow 0099\xLongrightarrowfill@{#1}{#2}}
\def\aitofill@{\arrowfill@{\lhook\joinrel\relbar}\relbar\rightarrow}
\newcommand{\aito}[2][]{%
	\ext@arrow 3095\aitofill@{#1}{#2}}
\def\Longaitofill@{\arrowfill@{\lhook\joinrel\relbar\joinrel\relbar}\relbar\rightarrow}
\newcommand{\aitoo}[2][]{%
	\ext@arrow 0099\Longaitofill@{#1}{#2}}
\newcommand{\ffto}[1][]{\aito[\operatorname{f\h[0.1]f}]{#1}}

\newcommand{\al}[2][]{\xleftarrow[#1]{#2}}
\def\xlongleftarrowfill@{\arrowfill@\longleftarrow\relbar\relbar}
\newcommand{\all}[2][]{%
	\ext@arrow 0099\xlongleftarrowfill@{#1}{#2}}
\newcommand{\aL}[2][]{%
	\ext@arrow 0055{\Leftarrowfill@}{#1}{#2}}
\def\xLongleftarrowfill@{\arrowfill@\Longleftarrow\Relbar\Relbar}
\newcommand{\aLL}[2][]{%
	\ext@arrow 0099\xLongleftarrowfill@{#1}{#2}}
\def\xmapstofill@{\arrowfill@{\mapstochar\relbar}\relbar\rightarrow}
\newcommand{\am}[2][]{%
	\ext@arrow 0395\xmapstofill@{#1}{#2}}
\def\xlongmapstofill@{\arrowfill@\relbar\relbar\longmapsto}
\newcommand{\amm}[2][]{%
	\ext@arrow 0399\xlongmapstofill@{#1}{#2}}

\newcommand{\eqq}{\DOTSB\protect\Relbar\protect\joinrel\Relbar}
\def\xeqqfill@{\arrowfill@\Relbar\Relbar\eqq}
\newcommand{\aeqq}[2][]{%
	\ext@arrow 0099\xeqqfill@{#1}{#2}}
\def\xRrightarrowfill@{\arrowfill@\equiv\equiv\Rrightarrow}
\newcommand{\aM}[2][]{\ext@arrow 0359\xRrightarrowfill@{#1}{#2}}
\newcommand{\Llongrightarrow}{%
	\DOTSB\protect\equiv\protect\joinrel\Rrightarrow}
\def\xLlongrightarrowfill@{\arrowfill@\equiv\equiv\Llongrightarrow}
\newcommand{\aMM}[2][]{%
	\ext@arrow 0099\xLlongrightarrowfill@{#1}{#2}}
\makeatother

\newcommand{\aoplaxn}[1]{\aR[\oplaxn]{#1}}

\newcommand{\apseudo}[1]{\aR[\pseudo]{#1}}

\newcommand{\afull}[1]{\ar[\operatorname{full}]{#1}}

\newcommand{\iso}{\cong}

\newcommand{\aisoo}[1][]{\aarr[#1]{\scriptstyle\simeq}}
\newcommand{\aequi}{\ensuremath{\stackrel{\raisebox{-1ex}{\kern-.3ex$\scriptstyle\sim$}}{\rightarrow}}}
\newcommand{\aequii}{\ensuremath{\stackrel{\raisebox{-1ex}{\kern-.3ex$\scriptstyle\sim$}}{\longrightarrow}}}


\newcommand{\PB}[1]{\arrow[#1,phantom,"\scalebox{1.6}{\color{black}$\lrcorner$}",very near start]}
\newcommand{\Ar}[4][]{\arrow[#2,"{#3}"{#1},""{name=#4, anchor=center}]}
\newcommand{\Ars}[4][]{\arrow[#2,"{#3}"'{#1},""{name=#4, anchor=center}]}
\newcommand{\Arb}[6][]{\arrow[#2,"{#3}"{#1},from=#4,to=#5,shorten <= #6 em, shorten >= #6 em]}
\newcommand{\Arbs}[6][]{\arrow[#2,"{#3}"'{#1},from=#4,to=#5,shorten <= #6 em, shorten >= #6 em]}


\NewDocumentCommand{\fib}{O{n} O{2.3} mmm}{%
	\begin{cd}*[#2][5]
		{#3}\ifx#1n{\arrow[d,"{\,\scaleu{#4}}"]}\else{\ifx#1i{\arrow[d,hookrightarrow,"{\,\scaleu{#4}}"]}\else{\ifx#1e{\arrow[d,equal,"{\,\scaleu{#4}}"]}\else{\ifx#1R{\arrow[d,Rightarrow,"{\,\scaleu{#4}}"]}\fi}\fi}\fi}\fi\\
		{#5}\ifx#1o{\arrow[u,"{\,\scaleu{#4}}"']}\fi
	\end{cd}\xspace
}
\NewDocumentCommand{\fibdiag}{O{n} O{2.3} mmm}{%
	\begin{cd}*[#2][5]
		{#3}\ifx#1n{\arrow[d,"{\,{#4}}"]}\else{\ifx#1i{\arrow[d,hookrightarrow,"{\,{#4}}"]}\else{\ifx#1e{\arrow[d,equal,"{\,{#4}}"]}\else{\ifx#1R{\arrow[d,Rightarrow,"{\,{#4}}"]}\fi}\fi}\fi}\fi\\
		{#5}\ifx#1o{\arrow[u,"{\,{#4}}"']}\fi
	\end{cd}\xspace
}
	

\NewDocumentCommand{\sq}{s O{n} O{7} O{7} O{} O{2.7} O{2.2} O{0.5} O{n}}{%
	\def\foosq##1##2##3##4##5##6##7##8{%
		\IfBooleanTF{#1}{\begin{cd}*}{\begin{cd}}[#3][#4]
				{##1}\ifx#2p{\PB{rd}}\fi\arrow[r,"{##5}"]\ifx#9l{\arrow[d,equal,"{##6}"']}\else{\arrow[d,"{##6}"']}\fi\&{##2}\ifx#9r{\arrow[d,equal,"{##7}"]}\else{\arrow[d,"{##7}"]}\fi\ifx#2l{\arrow[ld,Rightarrow,shorten <=#6ex,shorten >=#7ex,"{#5}"{pos=#8}]}\fi\\
				{##3}\ifx#9d{\arrow[r,equal,"{##8}"']}\else{\arrow[r,"{##8}"']}\fi\ifx#2o{\arrow[ur,Rightarrow,shorten <=#6ex,shorten >=#7ex,"{#5}"{pos=#8}]}\fi\&{##4}
		\end{cd}}%
		\foosq}
\NewDocumentCommand{\sqs}{s O{n} O{7} O{7} O{} O{} O{} O{}}{%
	\def\foosqs##1##2##3##4##5##6##7##8{%
		\IfBooleanTF{#1}{\begin{cd}*}{\begin{cd}}[#3][#4]
				{##1}\ifx#2p{\PB{rd}}\fi\arrow[r,"{##5}"#5]\arrow[d,"{##6}"'#6]\&{##2}\arrow[d,"{##7}"#7]\\
				{##3}\arrow[r,"{##8}"'#8]\&{##4}
		\end{cd}}%
		\foosqs}
\NewDocumentCommand{\nat}{s O{n} O{7} O{7} O{2.7} O{2.2} O{0.5} O{n}}{%
	\def\foonat##1##2##3##4##5##6{%
		\IfBooleanTF{#1}{\sq*}{\sq}[#2][#3][#4][{##1}_{##4}][#5][#6][#7][#8]{{##2}\ifx#8l{}\else{({##5})}\fi}{{##3}\ifx#8r{}\else{({##5})}\fi}{{##2}\ifx#8l{}\else{({##6})}\fi}{{##3}\ifx#8r{}\else{({##6})}\fi}{{##1}_{##5}}{\ifx#8l{}\else{{##2}({##4})}\fi}{\ifx#8r{}\else{{##3}({##4})}\fi}{{##1}_{##6}}}%
	\foonat}
\NewDocumentCommand{\tr}{s O{4.5} O{6.5} O{0} O{0} O{n} O{0} O{} O{0}}{%
	\def\footr##1##2##3##4##5##6{%
		\IfBooleanTF{#1}{\begin{cd}*}{\begin{cd}}[#3][#2]
				{##1}\arrow[rr,"{##4}"]
				\Ars[inner sep =0.2ex]{dr}{##5}{A}\&[#4ex]\&[#5ex]{##2}\Ar[inner sep =0.2ex]{ld}{##6}{B}\\
				\&{##3}
				\ifx#6l{\Arb{Rightarrow,shift right=#7em}{#8}{A}{B}{#9}}\else{\ifx#6o{\Arbs{Rightarrow,shift right=#7em}{#8}{B}{A}{#9}}\else{\ifx#6i{\Arbs[inner sep=0.9ex]{iso,shift right=#7em}{#8}{A}{B}{#9}}\else{\ifx#6e{\Arb{equal,shift right=#7em}{#8}{A}{B}{#9}}\else{}\fi}\fi}\fi}\fi
		\end{cd}}%
		\footr}
\NewDocumentCommand{\trslice}{s t+ O{7} O{7}}{%
	\def\footrslice##1##2##3##4##5##6{%
		\IfBooleanTF{#1}{\begin{cd}*}{\begin{cd}}[#3][#4]
				{##1}\arrow[r,"{##4}"]\IfBooleanF{#2}{\arrow[d,"{##5}"']}\&{##2}\\
				{##3}\IfBooleanT{#2}{\arrow[u,"{##5}"]}\arrow[ru,"{##6}"']
		\end{cd}}%
		\footrslice}
\NewDocumentCommand{\tc}{s t+ O{7} O{30} O{} O{} O{} o}{
	\def\footc##1##2##3##4##5{%
		\FPmul\Mulresulttwo{#3}{#3}%
		\FPmul\Mulresult{0.0026}{\Mulresulttwo}%
		\IfBooleanTF{#1}{\begin{cd}*}{\begin{cd}}[#3][#3]
				{##1}\Ar[#5]{r,bend left=#4}{##3}{A}\Ars[#6]{r,bend right=#4}{##4}{B}\&{##2}
				\IfBooleanTF{#2}{\Arb[description,pos=0.49]}{\Arb}{Rightarrow #7}{\mkern1mu {##5}}{A}{B}{\IfNoValueTF{#8}{\Mulresult}{#8}}
		\end{cd}}%
		\footc}
\NewDocumentCommand{\tcwl}{s t+ O{7} O{30} O{} O{} O{} o O{-2}}{
	\def\footcwl##1##2##3##4##5##6##7{%
		\FPmul\Mulresulttwo{#3}{#3}%
		\FPmul\Mulresult{0.0026}{\Mulresulttwo}%
		\IfBooleanTF{#1}{\begin{cd}*}{\begin{cd}}[#3][#3]
				##6 \arrow[r,"{##7}"]\&[#9ex]{##1}\Ar[#5]{r,bend left=#4}{##3}{A}\Ars[#6]{r,bend right=#4}{##4}{B}\&{##2}\IfBooleanTF{#2}{\Arb[description,pos=0.49]}{\Arb}{Rightarrow #7}{\mkern1mu {##5}}{A}{B}{\IfNoValueTF{#8}{\Mulresult}{#8}}
		\end{cd}}%
		\footcwl}
\NewDocumentCommand{\tcwr}{s t+ O{7} O{30} O{} O{} O{} o O{-2}}{
	\def\footcwr##1##2##3##4##5##6##7{%
		\FPmul\Mulresulttwo{#3}{#3}%
		\FPmul\Mulresult{0.0026}{\Mulresulttwo}%
		\IfBooleanTF{#1}{\begin{cd}*}{\begin{cd}}[#3][#3]
				{##1}\Ar[#5]{r,bend left=#4}{##3}{A}\Ars[#6]{r,bend right=#4}{##4}{B}\&{##2}\arrow[r,"{##7}"]\&[#9ex]##6\IfBooleanTF{#2}{\Arb[description,pos=0.49]}{\Arb}{Rightarrow #7}{\mkern1mu {##5}}{A}{B}{\IfNoValueTF{#8}{\Mulresult}{#8}}
		\end{cd}}%
		\footcwr}
\NewDocumentCommand{\tcv}{s t' O{7} O{30} mmmmm}{
	\FPmul\Mulresulttwo{#3}{#3}%
	\FPmul\Mulresult{0.0026}{\Mulresulttwo}%
	\IfBooleanTF{#1}{\begin{cd}*}{\begin{cd}}[#3][#3]
			{#5}\IfBooleanTF{#2}{\Ars{d,leftarrow,bend right=#4}{#7}{A}\Ar{d,leftarrow,bend left=#4}{#8}{B}}{\Ars{d,bend right=#4}{#7}{A}\Ar{d,bend left=#4}{#8}{B}}\\{#6}
			\Arb{Rightarrow}{#9}{A}{B}{\Mulresult}
		\end{cd}}
\NewDocumentCommand{\twonats}{s O{2.2} O{8} O{7} O{1.05} O{3.45} O{2}}{%
	\def\footwonats##1##2##3##4##5##6##7##8##9{%
		\def\foofootwonats####1####2####3####4####5{%
			\IfBooleanTF{#1}{\begin{cd}*}{\begin{cd}}[#3][#4]
					##1 \Ar{r}{##9}{} \Ars{d,bend right=40}{##5}{A} \Ar{d,bend left=40}{##6}{B} \&
					##2 \Ars{d,bend left}{##8}{Q} \arrow[ld,Rightarrow,shift left=#7,"{####4}"{pos=0.48},shorten <=#5ex, shorten >=#6ex]\&[-2ex]
					##1 \Ar{r}{##9}{} \Ar{d,bend right}{##5}{R} \&
					##2 \Ars{d,bend right=40}{##7}{C} \Ar{d,bend left=40}{##8}{D} \arrow[ld,Rightarrow,shift right=#7,"{####5}"'{pos=0.52},shorten <=#6ex, shorten >=#5ex] \\
					##3 \Ars{r}{####1}{} \&
					##4 \&
					##3 \Ars{r}{####1}{} \& 
					##4
					\Arbs{Rightarrow}{\,{####2}}{B}{A}{0.3}
					\Arbs{Rightarrow}{\,{####3}}{D}{C}{0.3}
					\Arb{equal}{}{Q}{R}{#2}
			\end{cd}}%
			\foofootwonats}\footwonats}
\makeatletter
\newcommand{\DOpFiboi}[2][\@nil]{%
	\def\tmp{#1}%
	\ifx\tmp\@nnil{\ensuremath{\slant{\catfont{DOpFib}\left({#2}\right)}}{\iso}}%
	\else{\ensuremath{\slant{\catfont{DOpFib}_{\h[0.4]{#1}}\h[-1]\left({#2}\right)}{\iso}}}\fi}
\makeatother
\newcommand{\tcstwodim}[9][7]{%
	\FPmul\Mulresulttwo{#1}{#1}%
	\FPmul\Mulresult{0.0027}{\Mulresulttwo}%
	\begin{cd}[#1][#1]
		{#2}\Ar[#7]{r,Rightarrow,bend left}{#4}{A}\Ars[#8]{r,Rightarrow,bend right}{#5}{B}\&{#3}
		\Arb{triple #9}{\mkern1mu {#6}}{A}{B}{\Mulresult}
	\end{cd}}
\newcommand{\tcswltwodim}[9][7]{%
	\def\footcswl##1##2{%
		\FPmul\Mulresulttwo{#1}{#1}%
		\FPmul\Mulresult{0.0027}{\Mulresulttwo}%
		\begin{cd}[#1][#1]
			##1 \arrow[r,Rightarrow,"{##2}"]\&[-2ex]{#2}\Ar[#7]{r,Rightarrow,bend left}{#4}{A}\Ars[#8]{r,Rightarrow,bend right}{#5}{B}\&{#3}
			\Arb{triple #9}{\mkern1mu {#6}}{A}{B}{\Mulresult}
	\end{cd}}%
	\footcswl%
}

\def\pbsqdrp#1#2#3#4#5#6#7#8#9{%
	\def\foopbsqdrp##1##2##3##4{%
		\begin{cd}[6][7.5]
			#1 \PB{rd} \arrow[r,"{#7}"] \arrow[d,"{#9}"'] \& #2 \PB{rd} \arrow[r,"{#8}"] \arrow[d,"{##1}"'] \& #3 \arrow[d,"{##2}"] \\
			#4  \arrow[r,"{##3}"']  \& #5 \arrow[r,"{##4}"'] \& #6 
	\end{cd}}%
	\foopbsqdrp%
}
\def\pbsqdrpN#1#2#3#4#5#6#7#8#9{%
	\def\foopbsqdrpN##1##2##3##4{%
		\begin{cd}*[5.5][5.5]
			#1 \PB{rd} \arrow[r,"{#7}"] \arrow[d,"{#9}"'] \& #2 \PB{rd} \arrow[r,"{#8}"] \arrow[d,"{##1}"'] \& #3 \arrow[d,"{##2}"] \\
			#4  \arrow[r,"{##3}"']  \& #5 \arrow[r,"{##4}"'] \& #6 
	\end{cd}}%
	\foopbsqdrpN%
}
\newcommand{\modifopls}[9][2.3]{%
	\def\foomodifopls##1##2##3##4##5##6##7##8##9{%
		\def\foofoomodifopls####1####2####3####4####5####6####7####8####9{%
			\begin{cd}[####8][####9]
				#2 \Ars{d}{##1}{} \Ar[##7]{r,bend left}{#6}{} \&
				#3 \Ar{d}{##2}{Q} \&[-2ex]
				#2 \Ars{d}{##1}{R} \Ar[##7]{r,bend left}{#6}{A} \Ars[##8]{r,bend right}{#7}{B} \&
				#3 \Ar{d}{##2}{} \\
				#4 \Ar[##9]{r,bend left}{#8}{C} \Ars[####1]{r,bend right}{#9}{D} 
				\arrow[ru,Rightarrow, shift left = 0.8em,"{##5}"{####2}####6]
				\&
				#5 \&
				#4 \Ars[####1]{r,bend right}{#9}{} 
				\arrow[ru,Rightarrow, shift right = 0.77em,"{##6}"'{####3}####7]
				\& 
				#5
				\Arbs[####4]{Rightarrow}{\,{##3}}{B}{A}{0.3}
				\Arbs[####5]{Rightarrow}{\,{##4}}{D}{C}{0.3}
				\Arb{equal}{}{Q}{R}{#1}
		\end{cd}}%
		\foofoomodifopls}%
	\foomodifopls%
}
\newcommand{\trslicematchN}[8]{%
	\begin{cd}*[#7][#8]
		{#1}\arrow[d,"{#4}"']\arrow[rd,dotdot,"{#5}"]\\
		{#2}\arrow[r,dotdot,"{#6}"']\&{#3}
\end{cd}}
			

\begin{document}

\title[2-classifiers via dense generators and a universe in stacks]{2-classifiers via dense generators and Hofmann--Streicher universe in stacks}
\author[L. Mesiti]{Luca Mesiti}
\address{School of Mathematics, University of Leeds, Woodhouse, Leeds LS2~9JT, England, United Kingdom}
\email{mmlme@leeds.ac.uk}
\keywords{elementary topos, 2-category, classifier, stack, dense generator, fibration}
\subjclass[2020]{18B25, 18N10, 18F20, 18D30}

\begin{abstract}
	We expand the theory of 2-classifiers, that are a 2-categorical generalization of subobject classifiers introduced by Weber. The idea is to upgrade monomorphisms to discrete opfibrations. We prove that the conditions of 2-classifier can be checked just on a dense generator. The study of what is classified by a 2-classifier is similarly reduced to a study over the objects that form a dense generator. We then apply our results to the cases of prestacks and stacks, where we can thus look just at the representables. We produce a 2-classifier in prestacks that classifies all discrete opfibrations with small fibres. Finally, we restrict such 2-classifier to a 2-classifier in stacks. This is the main ingredient of a proof that Grothendieck 2-topoi are elementary 2-topoi. Our results also solve a problem posed by Hofmann and Streicher when attempting to lift Grothendieck universes to sheaves.
\end{abstract}

\maketitle

\setcounter{tocdepth}{1}
\tableofcontents
				
\section{Introduction}

The notion of 2-classifier has been proposed by Weber in~\cite{weber_yonfromtwotop}. It is a 2-categorical generalization of the concept of subobject classifier and thus the main ingredient of a 2-dimensional notion of elementary topos. In~\cite{weber_yonfromtwotop}, Weber proposes as well a definition of elementary 2-topos. Although 2-dimensional elementary topos theory is still at its beginning, we believe it has a great potential. Indeed, for example, elementary 2-topoi could pave the way towards a 2-categorical logic and offer the right tools to study it. We believe they could also be fruitful for theories of bundles in geometry.

In this paper, we contribute to expand 2-dimensional elementary topos theory. We substantially reduce the work needed to prove that something is a 2-classifier, and we present the main part of a proof that Grothendieck 2-topoi are elementary 2-topoi. The reason why we focus on 2-classifiers is that the rest of the definition of elementary 2-topos proposed by Weber is yet to be ascertained. We hope that this paper will contribute to reach a universally accepted notion of elementary 2-topos. Weber's idea for 2-categorical classifiers is that, moving to dimension 2, one can and wants to classify morphisms with higher dimensional fibres. So monomorphisms (or subobjects) are upgraded to discrete opfibrations in a 2-category, which have been introduced by Street in~\cite{street_fibandyonlemma}. Interestingly, a 2-classifier can also be thought of as a Grothendieck construction inside a 2-category, thanks to Weber's~\cite{weber_yonfromtwotop}, see also~\ref{subsection2class}. Indeed the archetypal example of 2-classifier is given by the Grothendieck construction (or category of elements), that exhibits the 2-category $\Cat$ of small categories as the archetypal elementary 2-topos. We introduce the notion of \dfn{good 2-classifier} in \defx\ref{defgood2clas}, which captures well-behaved 2-classifiers. The idea is to keep as classifier a morphism with domain the terminal object, and upgrade the classification process from one regulated by pullbacks to one regulated by comma objects. Good 2-classifiers are closer to the point of view of logic, as they can still be interpreted as the inclusion of a verum inside an object of generalized truth values. Moreover, a classification process regulated by comma objects is sometimes more natural and easier to handle. We also ask good 2-classifiers to classify all discrete opfibrations that satisfy a fixed pullback-stable property $\opn{P}$. In our examples, such a $\opn{P}$ will be the property of having small fibres. Of course, the construction of the category of elements, hosted by $\Cat$, is a good 2-classifier, classifying all discrete opfibrations (in $\Cat$) with small fibres. A problem with 2-classifiers is that it is quite hard and lengthy to prove that something is a 2-classifier.

We prove that both the conditions of 2-classifier and what gets classified by a 2-classifier can be checked just over the objects that form a dense generator. So that the whole study of a would-be 2-classifier is substantially reduced. We also give a concrete recipe to build the characteristic morphisms (i.e.\ the morphisms into the universe that encode what gets classified). This is organized in the three Theorems~\ref{faith},~\ref{fullness} and~\ref{esssurj}; see also Corollaries~\ref{corollsharperesssurj} and~\ref{corollgoodtwoclas}. Dense generators capture the idea of a family of objects that generate all the other ones via colimits in a nice way. The preeminent example is given by representables in categories of presheaves. To have a hint of the benefits offered by our theorems of reduction to dense generators, we can look at the case of $\Cat$. We have that the singleton category alone forms a dense generator of $\Cat$. All the major properties of the Grothendieck construction are hence deduced from the trivial observation that everything works well over the singleton category (\exax\ref{exampleredincat}). The proof of our theorems of reduction to dense generators uses our calculus of colimits in 2-dimensional slices, developed in~\cite{mesiti_colimitsintwodimslices}. Such calculus generalizes to dimension 2 the well-known fact that a colimit in a 1-dimensional slice is precisely the map from the colimit of the domains which is induced by the universal property. It is based on the reduction of weighted 2-colimits to cartesian-marked oplax conical ones, developed by Street in~\cite{street_limitsindexedbycatvalued} and recalled in~\ref{subsectioncartmarkedoplaxcolim}, and on $\F$-category theory (also called enhanced 2-category theory), for which we take as main reference Lack and Shulman's~\cite{lackshulman_enhancedtwocatlimlaxmor}.

We then apply our theorems of reduction of the study of 2-classifiers to dense generators to the case of 2-presheaves, i.e.\ prestacks. Our theorems allow us to just consider the classification over representables. Yoneda lemma determines up to equivalence the construction of a good 2-classifier in prestacks that classifies all discrete opfibrations with small fibres. We explain how this involves discrete opfibrations over representables, which offer a 2-categorical notion of sieve. Indeed, the philosophy is to upgrade monomorphisms to discrete opfibrations. And sieves, which can be characterized as subfunctors of representables, are hence upgraded to discrete opfibrations over representables. Exactly as sieves are a key element for the subobject classifier in presheaves, the 2-dimensional generalization of sieves described above is a key element for the 2-classifier in prestacks. The only problem is that taking discrete opfibrations over representables only gives a pseudofunctor $\O$ which is not a 2-functor and that a priori only lands in large categories. Thanks to our joint work with Caviglia~\cite{cavigliamesiti_indexedgrothconstr}, we can apply an indexed version of the Grothendieck construction to produce a nice concrete strictification of such pseudofunctor. Although it was already known before~\cite{cavigliamesiti_indexedgrothconstr} that any pseudofunctor can be strictified, by the theory developed by Power in~\cite{power_generalcoherenceresult} and later by Lack in~\cite{lack_codescentobjcoherence}, the work of ~\cite{cavigliamesiti_indexedgrothconstr} can be applied to produce an explicit and easy to handle strictification $\t{\O}$ of $\O$, which in addition lands in $\Cat$. Moreover, we will show in Section~\ref{sectiontwoclassinstacks} that such strictification can also be restricted in a natural way to a good 2-classifier in stacks. Explicitly, the 2-functor $\t{\O}$ that we obtain takes presheaves on slices (see \prox\ref{propstrictification}). In \thex\ref{teor2classinprestacks}, we prove that $\t{\O}$ gives a good 2-classifier in prestacks that classifies all discrete opfibrations with small fibres. A partial result on this direction is already in Weber's~\cite{weber_yonfromtwotop}. Our result is in line with Hofmann and Streicher's~\cite{hofmannstreicher_liftinggrothuniverses}, that uses a similar idea to lift Grothendieck universes to presheaves, in order to interpret Martin-L\"{o}f type theory in a presheaf topos. It is also in line with the recent Awodey's~\cite{awodey_onhofmannstreicheruniverses}, that captures the construction of the Hofmann and Streicher's universe in presheaves in a conceptual way. Our proof goes through the bicategorical classification process given by the pseudofunctor $\O$. We show that, over representables, such classification is exactly the Yoneda lemma. We then use this to prove that the strictification $\t{\O}$ is a good 2-classifier in prestacks. Although some points would be smoother in a bicategorical context, we believe that it is important to show how strict the theory can be. In the case of prestacks, the strict classification process, which involves an indexed Grothendieck construction, actually seems more interesting than the bicategorical one, which reduces to the Yoneda lemma. In future work, we will adapt the results of this paper to the bicategorical context, using a suitable bicategorical notion of classifier.

Finally, in \thex\ref{teor2classinstacks}, we restrict our good 2-classifier in prestacks to a good 2-classifier in stacks, that classifies again all discrete opfibrations with small fibres. We achieve this by proving a general result of restriction of good 2-classifiers to nice sub-2-categories (\thex\ref{teorfactorization}), involving factorization arguments and our theorems of reduction to dense generators. Stacks are a bicategorical generalization of sheaves and they were introduced by Giraud in~\cite{giraud_methodedeladescente}. Like sheaves, they are able to glue together families of objects that are compatible under descent. But such descent compatibilities are only asked up to isomorphism. And the produced global data then equally recovers the starting local data up to isomorphism. Stacks are the right notion to use to generalize Grothendieck topoi to dimension 2. Our result is thus the main part of a proof that Grothendieck 2-topoi are elementary 2-topoi. As explained in \remx\ref{remourstacks}, we consider strictly functorial stacks with respect to a subcanonical Grothendieck topology. So that they form a full sub-2-category of the 2-category of 2-presheaves. While our good 2-classifier in prestacks involves a 2-dimensional notion of sieves, our good 2-classifier in stacks involves a 2-dimensional notion of closed sieves. The idea is to select, out of all the presheaves on slices considered in the definition of $\t{\O}$, the sheaves with respect to the Grothendieck topology induced on the slices. This restriction of $\t{\O}$ is tight enough to give a stack $\O_J$, but at the same time loose enough to still host the classification process of prestacks. We prove that $\O_J$ is a good 2-classifier in stacks that classifies all discrete opfibrations with small fibres. Our result solves a problem posed by Hofmann and Streicher in~\cite{hofmannstreicher_liftinggrothuniverses}. Indeed, in a different context, they considered the same natural idea to restrict their analogue of $\t{\O}$ by taking sheaves on slices. However, this did not work for them, as it does not give a sheaf. Only with a different approach Streicher managed in~\cite{streicher_universesintoposes} to construct a universe in sheaves. Our results show that the natural restriction of $\t{\O}$ to take sheaves on slices yields nonetheless a stack and a good 2-classifier in stacks. The idea is that, in order to increase the dimension of the fibres of the morphisms to classify, one should also increase the dimension of the ambient. And thus stacks behave better than sheaves for the classification of small families.

\subsection*{Outline of the paper}

In Section~\ref{sectionpreliminaries}, we recall 2-classifiers (\ref{subsection2class}), dense generators (\ref{subsectiondensegen}), stacks (\ref{subsectionstacks}) and cartesian-marked oplax colimits (\ref{subsectioncartmarkedoplaxcolim}). We also introduce good 2-classifiers (\defx\ref{defgood2clas}).

In Section~\ref{sectionreductiontogenerators}, we present a reduction of the study of a 2-classifier to dense generators (Theorems~\ref{faith},~\ref{fullness} and~\ref{esssurj}; see also Corollaries~\ref{corollsharperesssurj} and~\ref{corollgoodtwoclas}). We then prove a general result of restriction of good 2-classifiers to nice sub-2-categories (\thex\ref{teorfactorization}).

In Section~\ref{sectiontwoclassintwopresheaves}, we apply our theorems of reduction of the study of a 2-classifier to dense generators to the case of prestacks. We thus produce a good 2-classifier in prestacks that classifies all discrete opfibrations with small fibres (\thex\ref{teor2classinprestacks}). We also show a concrete recipe for the characteristic morphisms (\remx\ref{remconcreterecipeforclasmorph}).

In Section~\ref{sectiontwoclassinstacks}, we apply \thex\ref{teorfactorization} to restrict our good 2-classifier in prestacks to a good 2-classifier in stacks, classifying again all discrete opfibrations with small fibres (\thex\ref{teor2classinstacks}).

\subsection*{Notations}

Throughout this paper, we fix Grothendieck universes $\U$, $\V$ and $\W$ such that $\U\in \V\in \W$. We denote as $\Set$ the category of $\U$-small sets, as $\Cat$ the 2-category of $\V$-small categories (i.e.\ categories such that both the collections of their objects and of their morphisms are $\V$-small) and as $\CATlarge$ the 2-category of $\W$-small categories. So that $\Set\in \Cat$ and the underlying category $\Cat_0$ of $\Cat$ is in $\CATlarge$. \predfn{Small category} will mean $\V$-small category. \predfn{Small fibres}, for a discrete opfibration in $\Cat$, will mean $\U$-small fibres. \predfn{2-category} will mean a $\W$-small $\Cat$-enriched category. \predfn{Small 2-category} will mean $\V$-small 2-category.

We fix an arbitrary $2$-category \L with pullbacks along discrete opfibrations (see \defx\ref{R}), comma objects and terminal object. We also fix a choice of such pullbacks in \L such that the change of base of an identity is always an identity. Following the proofs, it will be clear that some results of this paper involving 2-classifiers \`{a} la Weber work also without assuming comma objects, and that some results involving \predfn{good 2-classifiers} (see \defx\ref{defgood2clas}) work also without assuming pullbacks along discrete opfibrations.

We will use the following notations.
\begin{flushleft}
	\begin{longtable}{p{0.215\linewidth} p{0.75\linewidth}}
		$\m{\C\op}{\Cat}$ & the (strict) functor 2-category from $\C\op$ to $\Cat$\\
		$\moplaxn{\C\op}{\Cat}$ & the 2-category of 2-functors, cartesian-marked oplax natural transformations and modifications\\
		$\Psm{\C\op}{\Cat}$ & the 2-category of pseudofunctors, pseudo-natural transformations and modifications\\
		$\yy\:\C\to\m{\C\op}{\Cat}$ & the 2-categorical Yoneda embedding\\
		$\St[J]{\C}$ & the full sub-2-category of $\m{\C\op}{\Cat}$ on stacks with respect to the Grothendieck topology $J$\\
		$\groth{F}\:\Groth{F}\to \C$ & the 2-category of elements of a 2-functor $F\:\A\op\to \Cat$ (with $\A$ a 2-category), or also the classical Grothendieck construction\\
		$(\phi)_B$ & the fibre of an opfibration $\phi$ in $\Cat$ over an object $B$ of the base\\
		$\Fib[b][\L][F]$ & the category of discrete opfibrations in a 2-category $\L$ over $F$\\
		$\Fib?[b][n][F]$ & the full subcategory of $\Fib[b][\L][F]$ on the discrete opfibrations that satisfy a fixed pullback-stable property $\opn{P}$; we will denote as $\opn{s}$ the property of having small fibres\\
		$\slice{\C}{C}$ & the slice of a category $\C$ over $C\in \C$\\
		$\laxslice{\L}{M}$ & the lax slice of a 2-category $\L$ over $M\in \L$\\
		$\dom$ & the domain (2-)functor from a (lax) slice\\
		$1$ & the terminal object of a (2-)category; variant $\1$ for the singleton category\\
		$\Delta 1$ & the constant at 1 presheaf\\
		$F\apseudo{} G$ & a pseudo-natural transformation\\
		$F\aoplaxn{} G$ & a cartesian-marked oplax natural transformation\\
		$\wlim{W}{F}$ & the enriched limit of $F$ weighted by $W$; variant $\wcolim{W}{F}$ for colimits\\
		$\oplaxncolim{K}$ & the cartesian-marked oplax conical colimit of the 2-diagram $K$\\
		$f\c g$ & the composite of 1-cells or the vertical composition of 2-cells\\
		$\alpha\ast \beta$ & the horizontal composition of 2-cells; variants $\alpha\ast f$ and $f\ast \alpha$ for the whiskerings of a 2-cell $\alpha$ with a 1-cell $f$\\
		$\id{A}$ & the identity 1-cell on $A$; variants $\Id{C}$ for the identity (2-)functor on $\C$ and $\id{f}$ for the identity 2-cell on a 1-cell $f$\\
		$A\ffto B$ & a fully faithful morphism in a 2-category or a fully faithful 2-functor\\
		$\M\cont \L$ & a full sub-2-category, i.e.\ an injective on objects and fully faithful 2-functor\\
		$-$ & placeholder
	\end{longtable}
\end{flushleft}

\section{Preliminaries}\label{sectionpreliminaries}

In this section, we recall some important concepts and results that we will use throughout the paper. These include 2-classifiers, dense generators and stacks. As we will need them to prove our theorems of reduction to dense generators of the study of 2-classifiers, we also recall cartesian-marked oplax conical colimits. Moreover, we introduce the notion of \predfn{good 2-classifier} (\defx\ref{defgood2clas}).

In~\ref{subsection2class}, we recall the notion of 2-classifier. Weber's idea (in~\cite{weber_yonfromtwotop}) is that a 2-classifier should be a discrete opfibration classifier. The definition of opfibration in a $2$-category is due to Street~\cite{street_fibandyonlemma}, in terms of pseudo-algebras for a $2$-monad. It is known that opfibrations in a $2$-category can be equivalently defined by representability, as in Weber's~\cite[Section~2.2]{weber_yonfromtwotop}. In \defx\ref{defgood2clas}, we introduce the notion of \predfn{good 2-classifier}.

In~\ref{subsectiondensegen}, we briefly recall from Kelly~\cite[Chapter~5]{kelly_basicconceptsofenriched} (2-categorical) dense generators and the preeminent example of representables in 2-presheaves. The idea is that every object of a 2-category can be written as a nice colimit of a small family of objects.

In~\ref{subsectionstacks}, we recall the concept of stack, which is a bicategorical generalization of sheaves. Stacks have been introduced by Giraud in~\cite{giraud_methodedeladescente}. Like sheaves, they are able to glue together families of objects that are compatible under descent. But such descent compatibilities are only asked up to isomorphism. The produced global data then equally recovers the starting local data up to isomorphism.

Finally, in~\ref{subsectioncartmarkedoplaxcolim}, we recall that the theory of weighted 2-colimits is equivalent to the theory of cartesian-marked oplax conical colimits. We will use this to prove our theorems of reduction to dense generators. Indeed cartesian-marked oplax conical colimits are essentially conical and much easier to handle in our proofs. They are a particular case of a general notion of (co)limit introduced by Gray in~\cite[\defx{I,7.9.1}]{gray_formalcattheory}. Street proved in~\cite[\thex{14} and \thex{15}]{street_limitsindexedbycatvalued} that these colimits are weighted colimits and that any weighted colimit can be reduced to one of this form. We gave new, more elementary proofs of this in~\cite[\thex{2.18} and \thex{2.19}]{mesiti_twosetenrgrothconstrlaxnlimits}.

\subsection{2-classifiers}\label{subsection2class}

Weber's idea is to define 2-classifiers by looking at the following well-known equivalent definition of subobject classifier.

\begin{defne}\label{defsubclas}
	Let $\E$ be a category. A \dfn{subobject classifier} is a monomorphism $i\:I\aito{} \O$ in $\E$ such that for every $F\in \E$ the function
	$$G_{i,F}\:\Hom[\E]{F}{\O}\to \Sub{F}$$
	given by pulling back $i$ is a bijection, where $\Sub{F}$ is the set of subobjects of $F$. When this holds, $I$ is forced to be the terminal object of $\E$.
\end{defne}

Towards a notion of 2-classifier, Weber proposed in~\cite{weber_yonfromtwotop} to upgrade the concept of monomorphism to the one of discrete opfibration. The idea is that, moving to dimension 2, i.e.\ increasing by 1 the dimension of the ambient, we want to increase by 1 also the dimension of the fibres of the morphisms to classify. While injective functions have as fibres either the singleton or the empty set, discrete opfibrations have as fibres general sets. Exactly as the notion of injective function extends to the one of monomorphism in any category, the notion of discrete opfibration extends to the one inside any 2-category. This idea is closely connected with that of homotopy level in Voevodsky's univalent foundations, see~\cite[Chapter~7]{univalentfoundations_homotopytypetheory} and Voevodsky's~\cite{voevodsky_univalentfoundations}.

Exactly as $\Set$ is the archetypal elementary topos, $\Cat$ needs to be the archetypal elementary 2-topos. And $\Cat$ hosts indeed a nice classification of discrete opfibrations, given by the category of elements (or Grothendieck construction).

\begin{defne}[Weber~{\cite[Section~2.2]{weber_yonfromtwotop}}, Street~\cite{street_fibandyonlemma}]\label{R}
	A morphism $s:{G}\to{F}$ in $\L$ is a \dfn{discrete opfibration in \L $($over $F$$)$} if for every $X\in \L$ the functor
	$$\HomC{\L}{X}{G}\arrr{s\c -}{5}\HomC{\L}{X}{F}$$
	induced between the hom-categories is a discrete opfibration in \Cat.
	
	We denote as \Fib[b][\L][F] or just as \Fib[b][n][F] the full subcategory of the strict slice $\slice{\L}{F}$ on the discrete opfibrations over $F$. That is, a morphism between discrete opfibrations $s\:G\to F$ and $s'\:G'\to F$ is a morphism $G\to G'$ that makes the triangle with $s$ and $s'$ commute. We denote as $\Fib?[b][n][F]$ the full subcategory of \Fib[b][n][F] on the discrete opfibrations that satisfy a fixed pullback-stable property $\opn{P}$.
\end{defne}

\begin{rem}
	By definition, a discrete opfibration $s\:G\to F$ in $\L$ is required to lift every $2$-cell $\theta\:s\c a\aR{}b$ to a unique $2$-cell $\ov{\theta}^a\:a\aR{}\theta\stb a$. We can draw the following diagram to say that $s\c \theta\stb a=b$ and $s\star\ov{\theta}^a=\theta$.
	\begin{cd}[4.3][4.3]
		X \arrow[dd,equal]\arrow[rr,bend left=25,"{a}",""'{name=A}]\arrow[rr,bend right=25,"{\theta\stb a}"',""{name=B}]\& \& G \arrow[dd,"{s}"]\\[1ex]
		\& |[alias=W]| G \arrow[rd,bend left=20,"{s}"]\\[-5ex]
		X \arrow[ru,bend left=18,"{a}"]\arrow[rr,bend right=18,"{b}"',""{name=Q}] \&\hphantom{.}\& F
		\arrow[from=A,to=B,Rightarrow,"{\h[2]\ov{\theta}^a}"]
		\arrow[from=W,to=Q,Rightarrow,"{\h[2]\theta}"]
	\end{cd}
\end{rem}

\begin{rem}\label{rempullbackopfib}
	Discrete opfibrations in $\L$ are stable under pullbacks. Indeed $\HomC{\L}{X}{-}$ preserves pullbacks (as it preserves all limits, because it is a representable) and discrete opfibrations in $\Cat$ are stable under pullbacks.
\end{rem}

\begin{rem}\label{remrecoverusualdiscopf}
	Applying \defx\ref{R} to $\L=\CAT$, we obtain a notion that is equivalent to the usual one of discrete opfibration. This is essentially because for $\L=\CAT$ it suffices to require the above liftings for $X=\1$. We are then able to lift entire natural transformations $\theta$ componentwise.
\end{rem}

The following proposition from our joint work with Caviglia~\cite{cavigliamesiti_indexedgrothconstr} extends the idea of \remx\ref{remrecoverusualdiscopf} to prestacks. Let $\C$ be a small category and consider the functor 2-category $\m{\C\op}{\Cat}$ (i.e.\ the 2-category of prestacks).

\begin{prop}[{\cite[\prox{3.5}]{cavigliamesiti_indexedgrothconstr}}]\label{propdiscopfibinpresheaves}
	A morphism $s:G\to F$ in $\m{\C\op}{\Cat}$ \pteor{that is, a $2$-natural transformation $s$} is a discrete opfibration in $\m{\C\op}{\Cat}$ if and only if for every $C\in \C$ the component $s_C$ of $s$ on $C$ is a discrete opfibration in $\Cat$.
\end{prop}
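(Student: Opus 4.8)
The plan is to reduce everything to the componentwise situation in $\Cat$. For the ``only if'' direction, fix $C\in\C$ and use the $2$-categorical Yoneda lemma: it gives an isomorphism of categories $\HomC{\m{\C\op}{\Cat}}{\y{C}}{H}\iso H(C)$, $2$-natural in $H\in\m{\C\op}{\Cat}$, under which ``postcompose with $s$'' corresponds to $s_C\colon G(C)\to F(C)$; so I would apply \defx\ref{R} with $X=\y{C}$ to get that $s\c-$ on the hom-categories is a discrete opfibration in $\Cat$, hence so is $s_C$, this property being stable under conjugation by isomorphisms of categories.

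For the ``if'' direction, assume each $s_C$ is a discrete opfibration in $\Cat$, i.e.\ (the $2$-cell lifting property, cf.\ the remark after \defx\ref{R}, read in $\Cat$) for every functor $h\colon Y\to G(C)$ and natural transformation $\tau\colon s_C\c h\Rightarrow k$ there is a unique $\ov{\tau}\colon h\Rightarrow h'$ with $s_C\c h'=k$ and $s_C\ast\ov{\tau}=\tau$. I must show that for every $X\in\m{\C\op}{\Cat}$ the functor $s\c-\colon\HomC{\m{\C\op}{\Cat}}{X}{G}\to\HomC{\m{\C\op}{\Cat}}{X}{F}$ is a discrete opfibration in $\Cat$; unwinding, given $a\colon X\to G$ and a modification $\psi\colon s\c a\Rrightarrow b$, I need a unique modification $\ov{\psi}\colon a\Rrightarrow a'$ with $s\ast\ov{\psi}=\psi$ (which forces $s\c a'=b$). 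First I would build $\ov{\psi}$ and $a'$ componentwise: for each $C$, apply the lifting property of $s_C$ to $h=a_C$ and $\tau=\psi_C\colon s_C\c a_C\Rightarrow b_C$, getting a unique $\ov{\psi}_C\colon a_C\Rightarrow a'_C$ with $s_C\c a'_C=b_C$ and $s_C\ast\ov{\psi}_C=\psi_C$.

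It remains to check that the $a'_C$ assemble into a $2$-natural transformation $a'$ and the $\ov{\psi}_C$ into a modification $\ov{\psi}\colon a\Rrightarrow a'$; granting this, $s\c a'=b$ and $s\ast\ov{\psi}=\psi$ hold componentwise by construction, and uniqueness of $\ov{\psi}$ follows componentwise from uniqueness of $\Cat$-lifts. Both assembly facts follow from a single uniqueness argument: for $u\colon C'\to C$ in $\C$, the $2$-cells $G(u)\ast\ov{\psi}_C$ and $\ov{\psi}_{C'}\ast X(u)$ have the same source $G(u)\c a_C=a_{C'}\c X(u)$ (by $2$-naturality of $a$), and whiskering each on the left by $s_{C'}$ yields the same $2$-cell: by $2$-naturality of $s$ the first becomes $F(u)\ast\psi_C$ and the second becomes $\psi_{C'}\ast X(u)$, and $F(u)\ast\psi_C=\psi_{C'}\ast X(u)$ is exactly the modification axiom for $\psi$. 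Since $s_{C'}$ is a discrete opfibration in $\Cat$, uniqueness of lifts forces $G(u)\ast\ov{\psi}_C=\ov{\psi}_{C'}\ast X(u)$; comparing targets gives $G(u)\c a'_C=a'_{C'}\c X(u)$ (so $a'$ is $2$-natural), and the displayed equality is itself the modification axiom for $\ov{\psi}$ (there being no $2$-cells in $\C$ to worry about, as $\C$ is an ordinary category). I expect this coherence bookkeeping --- forcing the pointwise lifts to be compatible with the restriction functors $X(u)$ and $G(u)$ --- to be the only real content of the argument, and it is precisely the uniqueness clause of the $\Cat$-lifts that makes it automatic.
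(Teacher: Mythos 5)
Your proof is correct. Note that the paper does not actually prove this proposition --- it is imported without proof from the companion work \cite[\prox{3.5}]{cavigliamesiti_indexedgrothconstr} --- so there is no in-text argument to compare against; what you give is the expected proof: the ``only if'' direction via the $2$-Yoneda isomorphism $\HomC{\m{\C\op}{\Cat}}{\y{C}}{H}\iso H(C)$ (using that discrete opfibrations in $\Cat$ are stable under conjugation by isomorphisms of categories), and the ``if'' direction by lifting componentwise and then using the uniqueness clause of the $\Cat$-lifts, together with $2$-naturality of $s$ and the modification axiom for $\psi$, to force $2$-naturality of $a'$ and the modification axiom for $\ov{\psi}$. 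The one implicit ingredient you rely on --- that a usual discrete opfibration $s_C$ in $\Cat$ lifts whole natural transformations, not just single arrows --- is exactly \remx\ref{remrecoverusualdiscopf}, which the paper records, so nothing is missing.
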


Recall from \cite{cavigliamesiti_indexedgrothconstr} also that \prox\ref{propdiscopfibinpresheaves} allows us to define \predfn{having small fibres} for a discrete opfibration in $\m{\C\op}{\Cat}$.

\begin{defne}[{\cite[\defx{3.7}]{cavigliamesiti_indexedgrothconstr}}]\label{defhavingsmallfibres}
	A discrete opfibration $s\:G\to F$ in $\m{\C\op}{\Cat}$ \dfn{has small fibres} if for every $C\in \C$ the component $s_C$ of $s$ on $C$ has small fibres.
\end{defne}

\begin{rem}[{\cite[\remx{3.8}]{cavigliamesiti_indexedgrothconstr}}]\label{remhavingsmallfibresispbstable}
	The property of having small fibres for a discrete opfibration in $\m{\C\op}{\Cat}$ is stable under pullbacks. Indeed taking components on $C\in \C$ preserves 2-limits in 2-presheaves and discrete opfibrations in $\Cat$ with small fibres are stable under pullbacks.
	
	We denote as $\Fib+[b][\m{\C\op}{\Cat}][F]$ the full subcategory of $\Fib[b][\m{\C\op}{\Cat}][F]$ on the discrete opfibrations with small fibres (that is, denoting the property of having small fibres as $\opn{s}$).
\end{rem}

\begin{prop}\label{Gpf}
	Let $p\:{E}\to{B}$ be a discrete opfibration in \L. For every $F\in \L$, pulling back $p$ extends to a functor
	$$\G{p,F}\:\HomC{\L}{F}{B}\to \Fib[b][n][F].$$
\end{prop}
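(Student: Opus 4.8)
The plan is to construct $\G{p,F}$ by hand and then verify functoriality using the two tools at our disposal: uniqueness of $2$-cell liftings for the discrete opfibration $p$ (\defx\ref{R}) and the universal property of the chosen pullbacks in $\L$. On objects, for $f\in\HomC{\L}{F}{B}$ let $p_f\:G_f\to F$ be the chosen pullback of $p$ along $f$, with second projection $\bar f\:G_f\to E$, so that $p\c\bar f=f\c p_f$; by \remx\ref{rempullbackopfib} the morphism $p_f$ is a discrete opfibration over $F$, so $\G{p,F}(f)\deq p_f$ indeed lies in $\Fib[b][n][F]$. On morphisms, given a $2$-cell $\alpha\:f\aR{}g$ of $\HomC{\L}{F}{B}$, whisker it with $p_f$ to get $\alpha\ast p_f\:f\c p_f\aR{}g\c p_f$; since $f\c p_f=p\c\bar f$, this $2$-cell has the form $p\c\bar f\aR{}g\c p_f$, hence by \defx\ref{R} it lifts uniquely along $\bar f$ to a $2$-cell $\ov{\alpha\ast p_f}^{\bar f}\:\bar f\aR{}(\alpha\ast p_f)\stb\bar f$ with $p\c\big((\alpha\ast p_f)\stb\bar f\big)=g\c p_f$. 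The pair $\big((\alpha\ast p_f)\stb\bar f,\,p_f\big)$ is therefore a cone on the cospan $E\xrightarrow{p}B\xleftarrow{g}F$ defining $G_g$, and the universal property of the pullback yields a unique $\G{p,F}(\alpha)\:G_f\to G_g$ with $\bar g\c\G{p,F}(\alpha)=(\alpha\ast p_f)\stb\bar f$ and $p_g\c\G{p,F}(\alpha)=p_f$ (writing $\bar g,p_g$ for the data of $G_g$). The last equation says precisely that $\G{p,F}(\alpha)$ is a morphism over $F$, i.e.\ a morphism of $\Fib[b][n][F]$.

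It remains to check functoriality. That $\G{p,F}(\id f)=\id{G_f}$ follows from uniqueness: the lift of $\id f\ast p_f=\id{p\c\bar f}$ along $\bar f$ is $\id{\bar f}$, so the induced map is the unique endomorphism of $G_f$ over $\bar f$ and $p_f$, namely $\id{G_f}$. For a composable pair $\alpha\:f\aR{}g$, $\beta\:g\aR{}h$, I would first isolate two standard consequences of the uniqueness clause in \defx\ref{R}, valid for any discrete opfibration $p$: liftings respect vertical composition, $(\psi\c\theta)\stb a=\psi\stb(\theta\stb a)$, and are stable under whiskering by a $1$-cell on the source, $(\theta\ast k)\stb(a\c k)=(\theta\stb a)\c k$. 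Using functoriality of whiskering and the first fact (with $\theta=\alpha\ast p_f$, $\psi=\beta\ast p_f$) one gets $\big((\beta\c\alpha)\ast p_f\big)\stb\bar f=(\beta\ast p_f)\stb\big((\alpha\ast p_f)\stb\bar f\big)$, whose left-hand side is $\bar h\c\G{p,F}(\beta\c\alpha)$ by definition. Using the second fact (with $\theta=\beta\ast p_g$, $a=\bar g$, $k=\G{p,F}(\alpha)$), together with $\bar g\c\G{p,F}(\alpha)=(\alpha\ast p_f)\stb\bar f$ and $\beta\ast\big(p_g\c\G{p,F}(\alpha)\big)=\beta\ast p_f$, one gets $\big((\beta\ast p_g)\stb\bar g\big)\c\G{p,F}(\alpha)=(\beta\ast p_f)\stb\big((\alpha\ast p_f)\stb\bar f\big)$, whose left-hand side is $\bar h\c\big(\G{p,F}(\beta)\c\G{p,F}(\alpha)\big)$ by definition. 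So the two composites agree after composing with $\bar h$; since they also agree after composing with $p_h$ (both give $p_f$), the uniqueness part of the universal property of the pullback $G_h$ forces $\G{p,F}(\beta\c\alpha)=\G{p,F}(\beta)\c\G{p,F}(\alpha)$.

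Everything here is, modulo the two lifting-compatibility facts, just functoriality of whiskering and the universal property of pullbacks; the step deserving the most care is keeping the bookkeeping straight in the composition identity — in particular verifying the two compatibility facts (each a one-line appeal to uniqueness of lifts) and that all the projections to $F$ do match up. Should one wish to shorten the argument, one may instead apply $\HomC{\L}{X}{-}$ for every $X\in\L$: this preserves pullbacks and sends $p$ to a discrete opfibration in $\Cat$ (\defx\ref{R}), so there the construction is the familiar functoriality of change of base along a natural transformation between discrete opfibrations, and the statement in $\L$ follows by representability; the direct construction above is however preferable, since it produces $\G{p,F}$ on the nose, as needed later.
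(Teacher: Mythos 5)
Your construction is exactly the paper's: define $\G{p,F}$ on objects by the chosen pullback, and on a $2$-cell $\alpha$ by lifting the whiskered $2$-cell $\alpha\ast p_f$ along $p$ and then factoring through the universal property of the target pullback; the paper simply declares functoriality "straightforward" from the uniqueness of liftings and the pullback's universal property, which is precisely the verification you carry out in detail. Your argument is correct and matches the paper's proof.
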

\begin{proof}
	Given a morphism $z\:F\to B$ in \L, consider the chosen pullback in \L
	\sq[p][6][6]{G^z}{E}{F}{B}{\t{z}}{s^z}{p}{z}
	We define $\G{p,F}(z)$ to be $s^z$, which is a discrete opfibration in \L by \remx\ref{rempullbackopfib}.
	
	Given a $2$-cell $\alpha\:z\aR{}z'\:F\to B$, we induce $\G{p,F}(\alpha)$ by lifting the 2-cell
	\tcwl{F}{B}{z}{z'}{\alpha}{G^z}{s^z}
	to $\t{z}$ along the discrete opfibration $p$. We obtain a unique $2$-cell
	\tc{G^z}{E}{\t{z}}{v}{\t{\alpha}}
	such that $p\c v = z'\c s^z$ and $p \t{\alpha}=\alpha s^z$. We define $\G{p,F}(\alpha)$ to be the morphism from $G^z$ to $G^{z'}$ induced by $s^z$ and $v$ via the universal property of the pullback $G^{z'}$. We may represent this with the following diagram:
	\begin{eqD}{1}
		\begin{cd}*[4][6.55]
			G^z \arrow[rd,dashed,"{\G{p,F}(\alpha)}"'{pos=0.5,description,inner sep=-0.5ex}]\arrow[rrd,bend left,"{\t{z}}",""'{name=A,pos=0.55}] \arrow[dd,"{s^z}"'{pos=0.35}]\\[-4ex]
			\&[-2ex] G^{z'} \arrow[r,"{\t{z'}}"] \arrow[Rightarrow,from=A,shift left=0.6ex,"{\t{\alpha}}"{pos=0.3}]\& E\arrow[dd,"{p}"]\\
			F\arrow[rd,equal] \arrow[rrd,bend left,"{z}"{pos=0.53},""'{name=B,pos=0.55}]\\[-4ex]
			\& F \arrow[uu,leftarrow,crossing over,"{s^{z'}}"{pos=0.68}]\arrow[r,"{z'}"'] \arrow[Rightarrow,from=B,shift left=0.6ex,"{\alpha}"{pos=0.3}] \& B
		\end{cd}
	\end{eqD}
	It is straightforward to show that $\G{p,F}$ is a functor, using the universal property of the pullback and the uniqueness of the liftings through the discrete opfibration $p$.
\end{proof}

\begin{notation}
	Given a morphism $z\:F\to B$ in \L, we will denote as $\dG[p,F]{z}$ or just as $\dG{z}$ the domain of the discrete opfibration $\G{p,F}(z)$. We will also often draw the action of $\G{p,F}$ on morphisms as in the diagram of equation~\refs{1}. Sometimes, we will denote the functor $\G{p,F}$ as $\G{p}$.
\end{notation}

\begin{prop}[{\cite[\prox{3.9}]{cavigliamesiti_indexedgrothconstr}}]\label{DOpFib-}
	The assignment $F\in \L\mto \Fib[b][n][F]\in \CATlarge$ extends to a pseudofunctor 
	$$\Fib[b][n][-]\:\L\op\to \CATlarge.$$
	Moreover this restricts to a pseudofunctor $\Fib?[b][n][-]$ that sends $F\mto\Fib?[b][n][F]$.
\end{prop}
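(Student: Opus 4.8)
The plan is to define the pseudofunctor on $1$-cells by pullback. Given a $1$-cell $f\:F'\to F$ in $\L$, we let $f\st\:\Fib[b][n][F]\to\Fib[b][n][F']$ send a discrete opfibration $s\:G\to F$ to the projection over $F'$ in the chosen pullback of $s$ along $f$; this pullback exists because \L\ has pullbacks along discrete opfibrations (\defx\ref{R}), and $f\st(s)$ is again a discrete opfibration by \remx\ref{rempullbackopfib}, so it lands in $\Fib[b][n][F']$. On a morphism $h$ of $\Fib[b][n][F]$ from $s\:G\to F$ to $s'\:G'\to F$ (so $s'\c h=s$), we induce $f\st(h)$ over $F'$ by the universal property of the pullback defining $f\st(s')$, from the projection of $f\st(s)$ to $G$, postcomposed with $h$, together with the projection of $f\st(s)$ to $F'$. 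Uniqueness of maps induced into a pullback makes $f\st$ a functor. (For fixed $s$ this is the construction of \prox\ref{Gpf} evaluated at $f$; here instead we keep $f$ fixed and vary $s$.)

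The pseudofunctoriality constraints are the canonical comparison isomorphisms between iterated pullbacks. The unitor is an identity: by our standing choice of pullbacks, the change of base along $\id{F}$ is an identity, so $(\id{F})\st=\Id{\Fib[b][n][F]}$ strictly. For the compositor, given composable $f\:F''\to F'$ and $g\:F'\to F$, pasting the pullback square defining $f\st(g\st s)$ below the one defining $g\st s$ exhibits the domain of $f\st(g\st s)$ as a pullback of $s$ along $g\c f$; since $(g\c f)\st(s)$ is by definition another such pullback, there is a unique comparison isomorphism $f\st(g\st(s))\aiso(g\c f)\st(s)$ over $F''$, and these are natural in $s$, again by uniqueness of maps into a pullback compatible with the projections.

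The coherence axioms now reduce to uniqueness of universal maps. The unit triangles hold trivially because the unitors are identities. For the associativity pentagon attached to $f\:F'''\to F''$, $g\:F''\to F'$, $h\:F'\to F$ and a discrete opfibration $s$ over $F$, both composites are morphisms in $\Fib[b][n][F''']$ between $f\st g\st h\st(s)$ and $(h\c g\c f)\st(s)$ commuting with the canonical projections to $G$, and both sides are pullbacks of a common cospan, so the two composites coincide. The only thing to watch throughout is that every pullback that occurs is taken along a discrete opfibration, so that it belongs to our chosen class; this holds because pullbacks of discrete opfibrations are discrete opfibrations (\remx\ref{rempullbackopfib}).

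For the last sentence of the statement, pullback-stability of $\opn{P}$ means $f\st$ restricts to a functor $\Fib?[b][n][F]\to\Fib?[b][n][F']$, and since $\Fib?[b][n][F'']$ is full in $\Fib[b][n][F'']$, the compositor isomorphisms, being isomorphisms between objects of $\Fib?[b][n][F'']$, already lie in it; hence $\Fib?[b][n][-]$ inherits the pseudofunctor structure. The main obstacle is purely organisational: setting up the various comparison isomorphisms and invoking the pullback pasting lemma and the uniqueness of induced maps consistently. It is worth noting that one genuinely obtains a pseudofunctor and not a $2$-functor, since $f\st\c g\st$ and $(g\c f)\st$ differ in general; the strictification needed in the applications of this paper is achieved differently, via the indexed Grothendieck construction.
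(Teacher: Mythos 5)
Your construction at the level of objects and $1$-cells, together with the compositor isomorphisms and their coherence, matches what the paper does (the paper only sketches the assignment and defers details to the cited reference, but the ingredients are the same: chosen pullbacks, the standing convention that change of base along an identity is an identity, and uniqueness of maps induced into pullbacks). The observation about restricting to $\Fib?[b][n][-]$ via pullback-stability and fullness is also fine.

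There is, however, a genuine gap: $\L$ is a $2$-category, so a pseudofunctor $\Fib[b][n][-]\:\L\op\to\CATlarge$ must also be defined on the $2$-cells of $\L$, and your proposal never does this. For a $2$-cell $\alpha\:y\aR{}y'\:H\to F$ one must produce a natural transformation $\alpha\st\:y\st\aR{}(y')\st$ between the two pullback functors $\Fib[b][n][F]\to\Fib[b][n][H]$; the paper defines its component on a discrete opfibration $s\:G\to F$ to be $\G{s,H}(\alpha)$, i.e.\ the morphism of \prox\ref{Gpf} obtained by \emph{lifting} the whiskered $2$-cell $\alpha\ast s^y$ along the discrete opfibration $s$ and then factoring through the pullback defining $(y')\st s$. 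This is not a formal consequence of pullback universal properties alone — it genuinely uses the $2$-cell–lifting property of discrete opfibrations — and it brings further obligations you have not addressed: naturality of $\alpha\st$ in $s$, functoriality of $\alpha\mto\alpha\st$ under vertical composition, and compatibility of the $2$-cell action with the compositors under horizontal composition. These are exactly the points where ``uniqueness of liftings through a discrete opfibration'' (rather than uniqueness of maps into a pullback) is the operative tool. Without the $2$-cell action the statement as claimed — a pseudofunctor out of the $2$-category $\L\op$, which is what \prox\ref{pseudonatG} and \prox\ref{corollpseudonatG} later rely on — is not established.
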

\begin{proof}(Definition of the assignment). On the underlying category of $\L\op$, we define $\Fib[b][n][-]$ as the restriction of the pseudofunctor given by the pullback (thanks to \remx\ref{rempullbackopfib}). So, given a morphism $H\ar{y} F$ in $\L$, we define
	$$\Fib[b][n][y]\deq y\st\:\Fib[b][n][F]\to\Fib[b][n][H]$$
	
	Given a $2$-cell $\alpha\:y\aR{}y'\:H\to F$ in $\L$, we define $\Fib[b][n][\alpha]=\alpha\st\:y\st\to {(y')}\st$ as the natural transformation whose component on a discrete opfibration $s\:G\to F$ in $\L$ is $\G{s,H}(\alpha)$.
\end{proof}

\begin{prop}\label{pseudonatG}
	Let $p\:{E}\to{B}$ be a discrete opfibration in \L. The functors $\G{p,F}$ are pseudo-natural in $F\in \L$.
\end{prop}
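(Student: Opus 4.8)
The plan is to exhibit the functors $\G{p,F}$ of \prox\ref{Gpf} as the components of a pseudo-natural transformation
$$\HomC{\L}{-}{B}\apseudo{}\Fib[b][n][-]$$
from the representable $2$-functor $F\mapsto\HomC{\L}{F}{B}$ to the pseudofunctor $\Fib[b][n][-]$ of \prox\ref{DOpFib-}, both regarded as pseudofunctors $\L\op\to\CATlarge$. Since the components are already in hand, what remains is to produce, for each $1$-cell $h\:H\to F$ of $\L$, an invertible structure $2$-cell filling the naturality square — that is, a natural isomorphism $\gamma_h\:\G{p,H}\c(-\c h)\ \cong\ h\st\c\G{p,F}$ between functors $\HomC{\L}{F}{B}\too\Fib[b][n][H]$ — and then to verify the two coherence axioms of a pseudo-natural transformation, namely compatibility with composition and with identities of $1$-cells of $\L$.

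First I would construct $\gamma_h$. Fix $h\:H\to F$ and a $1$-cell $z\:F\to B$. The discrete opfibration $\G{p,F}(z)=s^z$ is the chosen pullback of $p$ along $z$, so $h\st\G{p,F}(z)$ is the chosen pullback of $s^z$ along $h$; whereas $\G{p,H}(z\c h)$ is the chosen pullback of $p$ along $z\c h$. By the standard pasting lemma for pullbacks, the outer rectangle formed by those two pullbacks also exhibits $\dom\big(h\st\G{p,F}(z)\big)$ as a pullback of $p$ along $z\c h$; hence there is a unique isomorphism $(\gamma_h)_z\:\dom\big(h\st\G{p,F}(z)\big)\cong\dG[p,H]{z\c h}$ commuting with the projections to $H$ and to $E$, and since it lies over $H$ it is a morphism in $\Fib[b][n][H]$. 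Next I would check that $(\gamma_h)_-$ is natural in $z$, i.e.\ with respect to $2$-cells $\alpha\:z\aR{}z'$ in $\HomC{\L}{F}{B}$: since $\dG[p,H]{z'\c h}$ is a pullback of $p$ along $z'\c h$, it suffices to verify that the two relevant composites agree after postcomposition with the projection to $H$ and with the projection to $E$. The former is immediate because every morphism in sight lies over $H$; the latter follows, after unwinding the construction of $\G{p,F}(\alpha)$ and $\G{p,H}(\alpha\ast h)$ in \prox\ref{Gpf} as the morphisms induced by universal properties of pullbacks once $\alpha$, respectively $\alpha\ast h$, has been lifted through $p$, from the uniqueness of the lift of the appropriate whiskering of $\alpha$ through the discrete opfibration $p$.

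Finally I would verify the coherence axioms. They hold because every isomorphism occurring in them — the $(\gamma_h)_z$, and the pseudofunctoriality constraints of $\Fib[b][n][-]$ supplied by \prox\ref{DOpFib-}, which are themselves comparison maps built by pasting chosen pullbacks — is the \emph{unique} comparison between iterated chosen pullbacks of $p$, pinned down by compatibility with the projections down to the base and across to $E$; so the two sides of each axiom, having the same source and target and both being such comparisons, must coincide. The unit axiom is in fact strict: by our standing convention the change of base of an identity is an identity, so $\gamma_{\id{H}}$ is an identity. The same argument restricts verbatim to $\Fib?[b][n][-]$, since $\opn{P}$ is pullback-stable. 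The one step that calls for genuine bookkeeping is the $2$-cell naturality of the $\gamma_h$, but even that is routine given the uniqueness of pullbacks and of opfibration lifts; conceptually, the whole proposition is just the bicategorical Yoneda lemma applied to the representable $\HomC{\L}{-}{B}$ and the object $p\in\Fib[b][n][B]$.
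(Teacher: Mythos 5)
Your proposal is correct and follows exactly the route the paper intends: the paper's own proof is only the one-line remark that the result is "straightforward, using the universal property of the pullback and the uniqueness of the liftings through a discrete opfibration," and your argument is precisely that sketch carried out in detail (structure cells from the pasting lemma for pullbacks, naturality in $z$ from uniqueness of lifts through $p$, coherence from uniqueness of comparison maps between iterated chosen pullbacks). The closing observation that this is an instance of the bicategorical Yoneda lemma is a nice conceptual gloss, though not needed once the direct verification is done.
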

\begin{proof}
	The proof is straightforward, using the universal property of the pullback and the uniqueness of the liftings through a discrete opfibration.
\end{proof}

\begin{defne}[Weber~{\cite[\defx{4.1}]{weber_yonfromtwotop}}]\label{def2clas}
	A \dfn{$2$-classifier in $\L$} is a discrete opfibration $p\:{E}\to{B}$ in \L such that for every $F\in \L$ the functor
	$$\G{p,F}\:\HomC{\L}{F}{B}\to \Fib[b][n][F]$$
	is fully faithful.
	
	In that case, we say that a discrete opfibration $s\:{G}\to{F}$ in \L is \dfn{classified by $p$} (or that \dfn{$p$ classifies $s$}) if $s$ is in the essential image of $\G{p,F}$, and we call \dfn{characteristic morphism of $s$} a morphism $z\:F\to B$ such that $\G{p,F}(z)\iso s$.
\end{defne}

\begin{rem}
	While \defx\ref{defsubclas} asks for a universal monomorphism, \defx\ref{def2clas} asks for a universal discrete opfibration. The classification process is kept to be regulated by pullbacks. The condition to have a bijection is upgraded in dimension 2 to ask $\G{p,F}$ to be an equivalence of categories with its essential image. Notice that \defx\ref{def2clas} allows for a classification of a smaller class of discrete opfibrations. In dimension $1$, this idea brought for example to the concept of quasitopos.
	
	However, \defx\ref{def2clas} loses the interpretation of the subobject classifier as picking a verum inside an object of generalized truth values. Indeed the domain of a 2-classifier is not forced at all to be the terminal object. In order to keep such point of view, which is useful for categorical logic, we propose to upgrade the 1-dimensional classification process, which is regulated by pullbacks, to one regulated by comma objects. This is slightly less general than \defx\ref{def2clas}, but with better properties (see the following two remarks).
\end{rem}

\begin{defne}\label{defgood2clas}
	Let $\opn{P}$ be a fixed pullback-stable property $\opn{P}$ for discrete opfibrations in $\L$. A \dfn{good $2$-classifier in $\L$} (with respect to $\opn{P}$) is a morphism $\omega\:1\to \O$ in $\L$ such that for every $F\in \L$ the functor
	$$\Gg{\omega,F}\:\HomC{\L}{F}{\O}\to \Fib[b][n][F]$$
	given by taking comma objects from $\omega$ is fully faithful and forms an equivalence of categories when restricting the codomain to $\Fib?[b][n][F]$. (In particular, we are asking that $\Gg{\omega,F}$ lands in $\Fib?[b][n][F]$).
\end{defne}

In the following two remarks, we show that $\Gg{\omega,F}$ is indeed a functor which lands in $\Fib[b][n][F]$ and that good 2-classifiers are 2-classifiers enjoying better properties.

\begin{rem}\label{remreplacement}
	Taking comma objects from $\omega$ is equivalent to pulling back the lax limit $\tau$ of the arrow $\omega$, which serves as a replacement.
	\begin{eqD*}
		\sq*[l][6][6][\h[-3]\v[2]\opn{comma}]{G}{1}{F}{\O}{}{\Gg{\omega,F}(z)}{\omega}{z} \quad = \quad 
		\begin{cd}*[5.8][5.8]
			G\PB{rd} \arrow[d,"{\G{\tau,F}(z)}"'] \arrow[r] \& \O\b \arrow[d,"{\tau}"'] \arrow[r]\& 1 \arrow[d,"{\omega}"]\arrow[ld,Rightarrow,"{\text{comma}}"{inner sep=0.3ex,pos=0.6},shorten <=2.7ex, shorten >=2.2ex] \\
			F\arrow[r,"z"'] \& \O \arrow[r,equal]\& \O
		\end{cd}
	\end{eqD*}
	Moreover, by Weber's~\cite[\thex{2.11}]{weber_yonfromtwotop}, the span with vertex $\O\b$ formed by $\tau$ and the map to $1$ is a bisided discrete fibration. And we get that $\tau$ is a discrete opfibration. (In $\L=\Cat$, it is also known that such a $\tau$ is the free opfibration on the functor $\omega$.) Explicitly, the lifting of $\theta\:\tau\c a\to b$ to $a$ is calculated by applying the universal property of the comma object. Indeed $\theta\stb a$ is induced by 
	\begin{cd}[5.8][5.8]
		X \arrow[rd,bend right=20,"{b}"',""{name=A}]\arrow[r,"{a}"]\& \O\b\arrow[d,"{\tau}"]\arrow[r,"{}"] \arrow[to=A,Rightarrow,"{\theta}"]\& 1\arrow[ld,Rightarrow,"{\opn{comma}}",shorten <=2.7ex,shorten >=2.2ex] \arrow[d,"{\omega}"]\\
		\& \O \arrow[r,equal] \& \O
	\end{cd}
	and $\ov{\theta}^a\:a\aR{}\theta\stb a$ is then induced by the pair of 2-cells formed by the identity (between $X$ and $1$) and $\theta$ (between $X$ and $\O$).
	
	By \remx\ref{rempullbackopfib}, it follows that $\Gg{\omega,F}$ lands in $\Fib[b][n][F]$. Moreover, $\Gg{\omega,F}$ lands in $\Fib?[b][n][F]$ if and only if $\tau$ satisfies $\opn{P}$. It is easy to see that, up to choosing appropriate representatives of the comma objects, $\Gg{\omega,F}=\G{\tau,F}$. So that if $\omega$ is a good 2-classifier, $\tau$ is a 2-classifier.
\end{rem}

\begin{rem}\label{remgood2clas}
	Good 2-classifiers enjoy better properties than 2-classifiers. They are closer to the point of view of categorical logic, as they can still be thought of as the inclusion of a verum inside an object of generalized truth values. Moreover, a classification process regulated by taking comma objects from a morphism that has the terminal object as domain is sometimes easier to handle. As an example, the assignment of $\Gg{\omega,F}$ on morphisms is just induced by the universal property of the comma object, while for $\G{p,F}$, in \prox\ref{Gpf}, we had to consider liftings along a discrete opfibration. Moreover such classification process regulated by comma objects offers another justification for the idea of upgrading subobjects to discrete opfibrations. Indeed discrete opfibrations are what gets produced by taking comma objects from a morphism that has the terminal object as domain.
	
	In our examples, $\opn{P}$ will be the property of having small fibres. In some sense, our good 2-classifiers will classify ``all possible morphisms", as the $\Gg{\omega,F}$'s are required to be equivalences of categories. 
\end{rem}

\begin{exampl}\label{excat}
	$\CAT$ is the archetypal example of $2$-category endowed with a (good) $2$-classifier. Consider indeed $\omega=1\:\1\to \Set$. For every $\B\in \Cat$, the functor
	$$\Gg{\omega,\B}\:\HomC{\Cat}{\B}{\Set}\to \Fib[b][n][\B]$$
	is precisely the Grothendieck construction (or category of elements). It is well-known that this forms an equivalence of categories when restricting the codomain to be the full subcategory $\Fib+[b][n][\B]$ of $\Fib[b][n][\B]$ on the discrete opfibrations with small fibres. So that $1\:\1\to \Set$ is a good 2-classifier in $\Cat$.
	\begin{cd}[6.75][6.75]
		\E \arrow[r,""] \arrow[d,"{\forall p\hphantom{.}}"'{pos=0.25},"\text{disc opfib}"'{pos=0.55},"\text{small fibres}"'{pos=0.75}] \& {\1}\arrow[ld,Rightarrow,"{\text{comma}}"{inner sep=0.3ex,pos=0.6},shorten <=2.7ex, shorten >=2.2ex] \arrow[d,"{\1}"] \\
		\B \arrow[r,dashed,"{\exists \chi_p}"',"{\text{taking fibres}}"'{inner sep = 2.4ex}] \& {\Set}
	\end{cd}\v[-5.5]
	
	\noindent Notice that the lax limit of the arrow $\omega$ is given by the forgetful functor $\tau\:\Set\b\to \Set$ from pointed sets to sets.
\end{exampl}

\begin{rem}
	In light of this archetypal example, we can think of a 2-classifier as a Grothendieck construction inside a 2-category.
\end{rem}

\begin{notation}
	We will often write as $\tau\:\O\b\to \O$ any 2-classifier or would-be 2-classifier, having in mind the archetypal example of $\Cat$.
\end{notation}

\begin{rem}\label{remisoclasses}
	Upgrading monomorphisms to discrete opfibrations, one could try to upgrade $\Sub{F}$ to a category of isomorphism classes of discrete opfibrations over $F$. It is possible to form such a category and almost the entire reduction to dense generators of the study of 2-classifiers would equally work (if accordingly adjusted). However, there is one point, in \thex\ref{fullness}, that does not seem to work well with this choice. We will give more details in \remx\ref{remfullness}. We believe it is more natural and fruitful to work without isomorphism classes.
\end{rem}

\subsection{Dense generators}\label{subsectiondensegen}

In Section~\ref{sectionreductiontogenerators}, we will reduce to dense generators the study of 2-classifiers. Here we briefly recall what a (2-dimensional) dense generator is. The main reference we take for this is Kelly's~\cite[Chapter~5]{kelly_basicconceptsofenriched}.

The basic idea behind the concept of generator of a 2-category $\L$ is that of a family of objects that builds all the objects of $\L$ via weighted 2-colimits.

\begin{defne}\label{defnaivegen}
	A fully faithful 2-functor $I\:\K\to \L$ is a \dfn{naive generator} if every $F\in \L$ is a weighted 2-colimit in $\L$ of a diagram which factors through $\K$.
\end{defne}

\begin{defne}[{\cite[Section~5.1]{kelly_basicconceptsofenriched}}]\label{defdense}
	A 2-functor $I\:\Y\to \L$ with $\Y$ small is \dfn{a dense generator} (or also just \dfn{dense}) if the restricted Yoneda embedding
	\begin{fun}
		\t{I} & \: & \L & \too & \m{\Y\op}{\CAT} \\[0.7ex]
		&& F &\mto& \HomC{\L}{I(-)}{F}
	\end{fun}
	is (2-)fully faithful.
\end{defne}

\begin{rem}
	Of course the Yoneda embedding is fully faithful. And we may interpret this by saying that considering all morphisms from any object into $F$ we get the whole information of $F$. The idea of a dense generator is that morphisms from a smaller family of objects are enough.
\end{rem}

\begin{defne}
	Let $I\:\Y\to \L$ be a 2-functor. A weighted 2-colimit in $\L$ is called \dfn{$I$-absolute} if it is preserved by $\t{I}\:\L\to \m{\Y\op}{\CAT}$.
\end{defne}

When $I\:\Y\to \L$ is fully faithful, we can characterize density of $I$ in terms of weighted 2-colimits in $\L$.

\begin{teor}[{\cite[\thex{5.19}]{kelly_basicconceptsofenriched}}]\label{densityintermsofcolim}
	Let $I\:\Y\to \L$ be a fully faithful 2-functor. The following are equivalent:
	\begin{enumT}
		\item $I$ is a dense generator;
		\item every $F\in \L$ is an $I$-absolute weighted 2-colimit in $\L$ of a diagram which factors through $\Y$.
	\end{enumT}
\end{teor}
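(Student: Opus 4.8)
\emph{Proof plan.} The plan is to deduce both implications from the familiar Kan-extension characterization of density (see~\cite[Chapter~5]{kelly_basicconceptsofenriched}): a $2$-functor $I\:\Y\to\L$ is a dense generator if and only if, for every $F\in\L$, the weighted $2$-colimit $\wcolim{\t{I}(F)}{I}$ of the diagram $I$ itself, weighted by $\t{I}(F)=\HomC{\L}{I(-)}{F}$, exists in $\L$ and the canonical cocone (namely the identity of $\HomC{\L}{I(-)}{F}$) exhibits its vertex as $F$; equivalently, the identity $2$-cell exhibits $\Id{\L}$ as the pointwise left Kan extension $\Lan{I}I$. I would combine this with two elementary facts of the calculus of weighted $2$-colimits: a colimit weighted by a representable is evaluation, $\wcolim{\y{y}}{D}\iso D(y)$ for every $D\:\Y\to\L$ and $y\in\Y$; and the interchange (``Fubini'') law, which computes a colimit weighted by an iterated colimit as an iterated colimit. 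Full faithfulness of $I$ will be used exactly once, through the isomorphism $\t{I}\c I\iso\yy$ with the Yoneda embedding, valid because $\HomC{\L}{I(y_1)}{I(y_2)}\iso\HomC{\Y}{y_1}{y_2}$.

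\emph{From (i) to (ii).} Assume $\t{I}$ is fully faithful and fix $F\in\L$. The characterization gives $F\iso\wcolim{\t{I}(F)}{I}$, a weighted $2$-colimit of the diagram $I\:\Y\to\L$, which factors through $\Y$ since it is indexed by $\Y$. It remains to check that this colimit is $I$-absolute, i.e.\ preserved by $\t{I}$. Since colimits in $\m{\Y\op}{\CAT}$ are computed pointwise, the colimit of $\t{I}\c I$ weighted by $\t{I}(F)$ there is $\wcolim{\t{I}(F)}{(\t{I}\c I)}\iso\wcolim{\t{I}(F)}{\yy}$, using $\t{I}\c I\iso\yy$; and by the co-Yoneda lemma $\wcolim{\t{I}(F)}{\yy}\iso\t{I}(F)=\t{I}\!\left(\wcolim{\t{I}(F)}{I}\right)$. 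Unwinding the canonical comparison map witnessing preservation by $\t{I}$ identifies it with this composite isomorphism, so $\t{I}$ preserves the colimit and (ii) holds.

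\emph{From (ii) to (i).} Conversely, assume every $F\in\L$ is an $I$-absolute weighted $2$-colimit $F\iso\wcolim{W_F}{(I\c H_F)}$ of a diagram factoring through $\Y$ via some $H_F\:\J\to\Y$ with $\J$ small. Applying the colimit-preserving $\t{I}$ and using $\t{I}\c I\iso\yy$ again yields $\t{I}(F)\iso\wcolim{W_F}{(\yy\c H_F)}$ in $\m{\Y\op}{\CAT}$. I then compute the canonical colimit at $F$: substituting this presentation of the weight, applying the Fubini law to move the $\J$-indexed colimit outward, and using $\wcolim{\y{H_F(j)}}{I}\iso I(H_F(j))$ for each $j\in\J$ gives
\begin{equation*}
	\wcolim{\t{I}(F)}{I}\;\iso\;\wcolim{W_F}{\left(j\mapsto\wcolim{\y{H_F(j)}}{I}\right)}\;\iso\;\wcolim{W_F}{(I\c H_F)}\;\iso\;F.
\end{equation*}
After checking that this isomorphism is the canonical cocone on $F$, the characterization yields that $I$ is a dense generator.

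The main obstacle sits in the two steps I compressed into ``unwinding'' and ``checking the canonical cocone'': verifying that the abstract isomorphisms produced above are the \emph{canonical} comparisons — the canonical cocone exhibiting $\wcolim{\t{I}(F)}{I}$ over $F$, and the canonical comparison map witnessing preservation by $\t{I}$. These reduce to uniqueness of maps induced by universal properties together with naturality of the co-Yoneda and Fubini isomorphisms, but the delicate point is to keep track coherently of \emph{all} the canonical $2$-cells, not merely the underlying objects. Full faithfulness of $I$ is needed only for the identification $\t{I}\c I\iso\yy$; without it $\t{I}\c I$ would fail to be pointwise representable and both reductions would break down.
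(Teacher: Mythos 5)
The paper offers no proof of this statement: it is quoted verbatim from Kelly (Theorem~5.19), so there is no in-paper argument to compare yours against, and the relevant question is only whether your reconstruction is sound. It is, in outline, the standard one. For (i)~$\Rightarrow$~(ii) you follow Kelly exactly: $F\iso\wcolim{\t{I}(F)}{I}$ by the Kan-extension characterization (Kelly's Theorem~5.1), and $I$-absoluteness is the pointwise computation $\wcolim{\t{I}(F)}{(\t{I}\c I)}\iso\wcolim{\t{I}(F)}{\yy}\iso\t{I}(F)$, with full faithfulness entering only through $\t{I}\c I\iso\yy$; your claim that the comparison map is this composite is correct and is the content of Kelly's~(5.17). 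For (ii)~$\Rightarrow$~(i) your Fubini computation does work, but routing it through Theorem~5.1 is what forces the delicate verification you flag, namely that the composite isomorphism $\wcolim{\t{I}(F)}{I}\iso F$ is induced by the identity cylinder. A slightly more economical route, closer to Kelly's own and matching the paper's \defx\ref{defdense} of density directly, is to prove full faithfulness of $\t{I}$ without ever forming $\wcolim{\t{I}(F)}{I}$: for $F\iso\wcolim{W}{(I\c H)}$ an $I$-absolute colimit and any $G\in\L$ one has
$$\HomC{\L}{F}{G}\iso\wlim{W}{\HomC{\L}{I(H(-))}{G}}\iso\wlim{W}{\HomC{\m{\Y\op}{\CAT}}{\yy(H(-))}{\t{I}(G)}}\iso\HomC{\m{\Y\op}{\CAT}}{\t{I}(F)}{\t{I}(G)},$$
using Yoneda in the middle step and $I$-absoluteness (i.e.\ $\t{I}(F)\iso\wcolim{W}{(\yy\c H)}$) in the last; one then checks only that this composite inverts $\t{I}_{F,G}$, a single naturality verification rather than a cocone identification. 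Either way, the steps you compress are genuine but routine, and your account of where full faithfulness is used is accurate.
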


\begin{rem}
	We can thus interpret density of a fully faithful $I\:\Y\to \L$ as the request that all objects of $\L$ are nice weighted 2-colimits of objects of $\Y$. So this is stronger than being a naive generator.
\end{rem}

\begin{exampl}\label{exarepresinprestacks}
	Let $\C$ be a small 2-category. The Yoneda embedding $\yy\:\C\to \m{\C\op}{\Cat}$ is a dense generator. That is, representables form a dense generator of the 2-category of 2-presheaves. Indeed it is well-known that every 2-presheaf is a weighted 2-colimit of representables, weighted by itself. And $\yy$-absoluteness is automatic as $\t{\yy}$ is essentially the identity.
	
	In particular the singleton category $\1$ is a dense generator of $\Cat$.
\end{exampl}

\subsection{Stacks}\label{subsectionstacks}

We consider $\Cat$-valued stacks that, for simplicity, have a $1$-category as domain. The stacks we consider have the usual gluing condition that gives an equivalence of categories between the image on an object $C$ and each category of descent data on $C$. We recall below the explicit gluing conditions.

We will use the language of sieves, rather than the one of covering families. This simplifies the form of the conditions of stack and will make it easier for us to prove that our 2-classifier in stacks is indeed a stack. Moreover, sieves are also what forms the subobject classifier of sheaves, in dimension 1. The less standard equivalent definition of stack that we write below can be obtained unravelling Street's~\cite[Section~2]{street_charactbicategoriesstacks} abstract definition of stack, in the (more usual) case of a $1$-dimensional Grothendieck topology.

\begin{defne}\label{defsieve}
	Let $\C$ be a category. A \dfn{sieve} $S$ on $C\in \C$ is a collection of morphisms with codomain $C$ that is closed under precomposition with any morphism of $\C$.
	
	Equivalently, a sieve $S$ on $C$ is a subfunctor of the representable $\y{C}$.
	
	The \dfn{maximal sieve} is the collection of all morphisms with codomain $C$, or equivalently the identity on $\y{C}$.
\end{defne}

\begin{notation}
	Given a pseudofunctor $F\:\C\op \to \Cat$ and a morphism $g\:D'\to D$ in $\C$, we denote as $g\st$ the functor $F(g)$.
\end{notation}

The following definition upgrades the concept of matching family to dimension $2$. The compatibility under descent of the local data is up to isomorphism.

\begin{defne}\label{defdescentdatum}
	Let $F\: \C\op \to \Cat$ be a pseudofunctor and let $S$ be a sieve on $C\in \C$. A \dfn{descent datum for $F$ with respect to $S$} is an assignment
	$$(D\ar{f}C)\in S \quad\am{m}\quad M_f\in F(D)$$
	together with, for all composable morphisms
	$D'\ar{g}D\ar{f}C$
	with $f\in S$, an isomorphism
	$${\phi^{f,g}} \: g\st M_f \aisoo M_{g\c f}$$ such that, for all composable morphisms
	$D''\ar{h} D' \ar{g}  D\ar{f} C$
	with $f\in S$, the following cocycle condition holds:
	\begin{cd}[5][5]
		h\st g\st M_f \arrow[d,"", iso] \arrow[r,"h\st \phi^{f,g}"] \&   h\st M_{f\c g} \arrow[d,"\phi^{f\c g,h}"] \\
		(g\c h)\st M_f \arrow[r,"\phi^{f,g\c h}"']\& {M_{f\c g\c h}.}
	\end{cd}
\end{defne}

The following definition upgrades the concept of amalgamation for a matching family to dimension 2. The global data produced only recovers the starting local data up to isomorphism.

\begin{defne}\label{defeffectivedescentdatum}
	In the notation of \defx\ref{defdescentdatum}, a descent datum $m$ for $F$ with respect to $S$ is \dfn{effective} if there exists an object $M\in F(C)$ together with, for every morphism $f\:D\to C$ in $S$, an isomorphism
	$$\psi^f\:f\st M \aisoo M_f$$
	such that, for all composable morphisms
	$D'\ar{g} D \ar{f} C$
	with $f\in S$
	\begin{cd}[5][5]
		g\st f\st M \arrow[d,"",iso] \arrow[r,"g\st \psi^{f}"] \&   {g\st M_f} \arrow[d,"\phi^{f,g}"] \\
		(f\c g)\st M\arrow[r,"\psi^{f\c g}"']\& {M_{f\c g}.}
	\end{cd}
\end{defne}

\begin{rem}
	Notice that the square in \defx\ref{defeffectivedescentdatum} is very similar to the one of \defx\ref{defdescentdatum}. An object $M$ that makes a descent datum $m$ effective plays the role of an $M_{\id{}}$, although the identity belongs to the sieve if and only if the sieve is maximal.
\end{rem}

\begin{defne}\label{defstack}
	Let $\C$ be a category equipped with a Grothendieck topology $J$. A pseudofunctor $F\: \C\op \to \Cat$ is a \dfn{stack \pteor{with respect to $J$}} if it satisfies the following three conditions for every $C\in \C$ and covering sieve in $J$ on $C$:
	\begin{enum}
		\item (gluing of objects) every descent datum for $F$ with respect to $S$ is effective;
		\vspace{2.5mm}
		\item ({gluing of morphisms}) for all $X,Y\in F(C)$ and for every assignment to each $f\: D\to C$ in $S$ of a morphism $h_f\: f\st X \to f\st Y$ in $F(D)$ such that $g\st(h_f)=h_{f\c g}$ for all composable morphisms $D'\ar{g}D\ar{f}C$, there exists a morphism $h\: X \to Y$ such that $f\st h =h_f$;
		\vspace{2.5mm}
		\item ({uniqueness of gluings of morphisms}) for all $X,Y\in F(C)$ and morphisms $h,k:X\to Y$ such that $f\st h=f\st k$ for every $f\: D\to C$ in $S$, it holds that $h=k$.
	\end{enum}
\end{defne}

\begin{rem}
	Conditions $(ii)$ and $(iii)$ of \defx\ref{defstack} may be interpreted as saying that $F$ is a sheaf on morphisms.
\end{rem}

\begin{teor}[Street~{\cite[Section~2]{street_charactbicategoriesstacks}}]\label{stacksarebireflective}
	Stacks form a bireflective sub-2-category of the $2$-category $\Psm{\C\op}{\Cat}$ of pseudofunctors, pseudo-natural transformations and modifications.
\end{teor}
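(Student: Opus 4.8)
The plan is to exhibit the sub-$2$-category of stacks as a bireflective localization of $\Psm{\C\op}{\Cat}$: I would produce a left biadjoint to the inclusion --- the \emph{stackification} --- by first recognizing stacks as the objects local with respect to a small set of $1$-cells, and then invoking the bicategorical theory of reflective localizations, whose explicit model here is the iterated plus construction. Two preliminary observations set the stage: $\Psm{\C\op}{\Cat}$ admits all small weighted bicolimits, computed pointwise from the bicocomplete $\Cat$; and the inclusion of stacks is full on $1$-cells (pseudo-natural transformations) and on $2$-cells (modifications), so that only the existence of a reflection is at issue.

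To recast being a stack as a locality condition, fix $C\in \C$ and a covering sieve $S\in J(C)$, and regard $S$ --- a subfunctor of $\y{C}$, see \defx\ref{defsieve} --- as a $\Cat$-valued prestack with discrete values. Writing $S$ as a $2$-colimit of representables indexed by its category of elements (\exax\ref{exarepresinprestacks}) and applying the Yoneda lemma in its bicategorical form, one obtains equivalences of categories, pseudo-natural in $G\in \Psm{\C\op}{\Cat}$, identifying $\HomC{\Psm{\C\op}{\Cat}}{\y{C}}{G}$ with $G(C)$ and $\HomC{\Psm{\C\op}{\Cat}}{S}{G}$ with the category $\opn{Desc}(\restr{G}{S})$ of descent data for $G$ with respect to $S$ (\defx\ref{defdescentdatum}) together with their morphisms. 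Under these identifications, the functor induced by the inclusion $S\ito \y{C}$ becomes the canonical comparison $G(C)\to \opn{Desc}(\restr{G}{S})$, and conditions $(i)$, $(ii)$, $(iii)$ of \defx\ref{defstack} assert exactly that this comparison is essentially surjective (\defx\ref{defeffectivedescentdatum}), full, and faithful --- that is, an equivalence of categories. Hence $G$ is a stack if and only if $G$ is local with respect to the small set $\Sigma\deq \set{S\ito \y{C}}{C\in \C,\ S\in J(C)}$ of $1$-cells of $\Psm{\C\op}{\Cat}$, and the sub-$2$-category of stacks is precisely the full sub-$2$-category of $\Sigma$-local objects.

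The $\Sigma$-local objects of a bicocomplete bicategory form a bireflective sub-$2$-category whenever $\Sigma$ is small, and concretely the reflection is built by the plus construction. For a prestack $F$, set $F^{+}(C)\deq \colim_{S\in J(C)}\opn{Desc}(\restr{F}{S})$, the filtered pseudo-colimit over the directed poset of covering sieves on $C$ ordered by reverse inclusion --- directedness being stability of covering sieves under finite intersection --- with transition functors restricting descent data along inclusions $S'\cont S$, and with the canonical pseudo-natural $\eta_F\:F\to F^{+}$ induced by the restriction functors $F(C)\to \opn{Desc}(\restr{F}{S})$. One checks that $F^{+}$ is again a prestack (functoriality in $C$ from pullback of sieves and of descent data), that $F\mapsto F^{+}$ together with $\eta$ is pseudo-natural in $F$, and that a fixed finite number $n$ of iterations produces a stack $aF\deq F^{(n)}$. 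For the universal property: if $G$ is a stack then each comparison $G(C)\to \opn{Desc}(\restr{G}{S})$ is an equivalence, hence $\eta_G$ is an equivalence; and $G$, being $\Sigma$-local, is orthogonal to every $\eta_{F^{(k)}}$, so precomposition with $\eta_{F^{(k)}}$ is an equivalence $\HomC{\Psm{\C\op}{\Cat}}{F^{(k+1)}}{G}\to \HomC{\Psm{\C\op}{\Cat}}{F^{(k)}}{G}$; composing these for $k=0,\dots,n-1$ yields an equivalence $\HomC{\Psm{\C\op}{\Cat}}{aF}{G}\to \HomC{\Psm{\C\op}{\Cat}}{F}{G}$ implemented by $\eta_F$. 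This exhibits the sub-$2$-category of stacks as bireflective in $\Psm{\C\op}{\Cat}$, which is the assertion; the argument is, in substance, Street's in~\cite{street_charactbicategoriesstacks}.

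The hard part will be the technical core of the plus construction: showing that the filtered pseudo-colimit defining $F^{+}$ is compatible with the formation of descent categories --- i.e.\ that the assignment $F\mapsto \opn{Desc}(\restr{F}{S})$ preserves filtered pseudo-colimits of prestacks, which in turn rests on filtered pseudo-colimits commuting in $\Cat$ with the finite bilimits out of which a descent category is assembled, with the usual care required when covering families are infinite --- together with the coherence bookkeeping that upgrades $F\mapsto F^{+}$ to a genuine pseudofunctor with $\eta$ pseudo-natural, and the verification that the number $n$ of iterations needed can be chosen independently of $F$. Once these are in hand, the orthogonality argument for the universal property is purely formal.
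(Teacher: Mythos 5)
The paper does not prove this statement; it is quoted from Street, and Street's own characterization is in substance the route you take: a pseudofunctor $G$ is a stack iff restriction along each covering sieve inclusion $S\ito \y{C}$ induces an equivalence $\HomC{\Psm{\C\op}{\Cat}}{\y{C}}{G}\to \HomC{\Psm{\C\op}{\Cat}}{S}{G}$, so that stacks are exactly the local objects for a small set of $1$-cells and the bireflection is stackification. Your identification of $\HomC{\Psm{\C\op}{\Cat}}{S}{G}$ with the descent category, via the colimit-of-representables presentation of $S$ over its category of elements and the bicategorical Yoneda lemma, and your reading of conditions $(i)$--$(iii)$ of \defx\ref{defstack} as essential surjectivity, fullness and faithfulness of the comparison $G(C)\to \opn{Desc}(\restr{G}{S})$, are both correct and are exactly how the definition is meant to be unravelled. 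So the outline is sound and faithful to the cited argument.

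Two cautions about the part you defer, one of which is a step that would fail as stated. The claim you single out as the technical core --- that $F\mto \opn{Desc}(\restr{F}{S})$ preserves filtered pseudo-colimits --- is false in general: the descent category is a bilimit over the (typically infinite) category of elements of $S$, and filtered colimits in $\Cat$ commute only with \emph{finite} bilimits. The plus construction does not need this commutation; the standard argument instead verifies directly, by manipulating descent data, that each application of $(-)^{+}$ improves the comparison functors by one level (faithful, then full, then essentially surjective), so that finitely many iterations land in stacks --- compare the concrete $Z^{++}$ argument in the proof of \prox\ref{propOJisastack}, which is the $\Set$-valued shadow of exactly this verification. Relatedly, ``$G$ is $\Sigma$-local, hence orthogonal to every $\eta_{F^{(k)}}$'' is not purely formal: one must either show that $\eta_F$ lies in the saturation of $\Sigma$ under the colimits used to build $F^{+}$, or check directly that precomposition with $\eta_F$ is an equivalence into any local $G$. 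Neither point breaks the plan, but both replace the step you flagged with a different, more elementary verification.
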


\begin{rem}\label{remourstacks}
	As the notion of 2-classifier is rather strict, we will consider strictly functorial stacks, so that they form a full sub-2-category of the functor 2-category $\m{\C\op}{\Cat}$. We will also take a subcanonical Grothendieck topology, so that all representables are sheaves (and hence stacks). We keep however the usual gluing conditions written above, that give an equivalence of categories between $F(C)$ and each category of descent data on $C$ with respect to a covering sieve $S$. 
	
	In future work, we will produce a suitable classifier for the usual pseudofunctorial stacks.
\end{rem}

\subsection{Cartesian-marked oplax conical colimits}\label{subsectioncartmarkedoplaxcolim}

In Section~\ref{sectionreductiontogenerators}, to prove our theorems of reduction to dense generators, we will need a calculus of colimits in 2-dimensional slices. We explored such a calculus in~\cite{mesiti_colimitsintwodimslices}. A key ingredient is the reduction of weighted 2-colimits to cartesian-marked oplax conical ones, that we now recall from Street's~\cite[\thex{15}]{street_limitsindexedbycatvalued}. See also our~\cite[\thex{2.19}]{mesiti_twosetenrgrothconstrlaxnlimits} for new, more elementary proofs. Such reduction is regulated by the 2-category of elements construction, which is a natural extension of the Grothendieck construction that admits $\Cat$-valued presheaves on 2-categories (see~\cite{street_limitsindexedbycatvalued,mesiti_twosetenrgrothconstrlaxnlimits}).

Cartesian-marked oplax conical colimits are a particular case of a general notion of (co)limit introduced by Gray in~\cite[\defx{I,7.9.1}]{gray_formalcattheory}. More recently, Descotte, Dubuc and Szyld brought attention to (a pseudo version of) this concept in~\cite{descottedubucszyld_sigmalimandflatpseudofun}, with the name \dfn{sigma limits}.

\begin{defne}
	Let $W\:\A\op\to \CAT$ be a $2$-functor with $\A$ small, and consider $2$-functors $M,N\:{\left(\Groth{W}\right)}\op\to \D$, where $\groth{W}\:\Groth{W}\to \A$ is the 2-category of elements of $W$. A \dfn{cartesian-marked oplax natural transformation $\alpha$ from $M$ to $N$}, denoted $\alpha\:M\aoplaxn{}N$,\v is an oplax natural transformation $\alpha$ from $M$ to $N$ such that the structure $2$-cell on every morphism $\p{f,\id{}}\:(A,W(f)(X))\al{} (B,X)$ in ${\left(\Groth{W}\right)}\op$ is the identity.
\end{defne}

\begin{defne}\label{defoplaxnormalconical}
	Let $W\:\A\op\to \CAT$ be a $2$-functor with $\A$ small, and let $F\:\Groth{W}\to \C$ be a $2$-functor. The \dfn{cartesian-marked oplax conical $2$-colimit of $F$}, which we will denote as $\oplaxncolim{F}$, is (if it exists) an object $C\in \C$ together with a $2$-natural isomorphism of categories
	$$\HomC{\C}{C}{U}\iso \HomC{\moplaxn{{\left(\Grothdiag{W}\right)}\op}{\CAT}}{\Delta 1}{\HomC{\C}{F(-)}{U}},$$
	where $\moplaxn{{\left(\Grothdiag{W}\right)}\op}{\CAT}$ is the 2-category of 2-functors from ${\left(\Grothdiag{W}\right)}\op$ to $\CAT$, cartesian-marked oplax natural transformations and modifications.
	
	\noindent When $\oplaxncolim{F}$ exists, the identity on $C$ provides a cartesian-marked oplax natural transformation $\mu\:\Delta 1 \aoplaxn{}\HomC{\C}{F(-)}{C}$
	called the \dfn{universal cartesian-marked oplax cocone}.
\end{defne}

\begin{teor}[Street~{\cite[\thex{15}]{street_limitsindexedbycatvalued}}, new proof in our~{\cite[\thex{2.19}]{mesiti_twosetenrgrothconstrlaxnlimits}}]\label{redoplaxnormalconical}
	Every weighted $2$-colimit can be reduced to a cartesian-marked oplax conical one. Given $2$-functors $F\:\A\to \C$ and $W\:\A\op\to\CAT$ with $\A$ small,
	$$\wcolim{W}{F}\iso \oplaxncolim{\left(F\c \groth{W}\right)}$$
	where $\groth{W}\:\Groth{W}\to\A$ is the $2$-category of elements of $W$.
\end{teor}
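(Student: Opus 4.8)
The plan is to show that whenever $\oplaxncolim{\left(F\c\groth{W}\right)}$ exists it represents the same $2$-functor $\C\to\CAT$ that $\wcolim{W}{F}$ represents, so that the two objects are canonically isomorphic by the $2$-categorical Yoneda lemma; running the same argument backwards shows that one side exists as soon as the other does. Concretely, fix $U\in\C$ and abbreviate $G\deq\HomC{\C}{F(-)}{U}\:\A\op\to\CAT$. By the defining universal property of a weighted $2$-colimit, $\HomC{\C}{\wcolim{W}{F}}{U}=\HomC{\m{\A\op}{\CAT}}{W}{G}$, while by \defx\ref{defoplaxnormalconical} applied to $F\c\groth{W}$ we have $\HomC{\C}{\oplaxncolim{\left(F\c\groth{W}\right)}}{U}=\HomC{\moplaxn{{\left(\Grothdiag{W}\right)}\op}{\CAT}}{\Delta 1}{G\c\groth{W}}$. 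So it suffices to produce an isomorphism of categories
$$\HomC{\m{\A\op}{\CAT}}{W}{G}\ \iso\ \HomC{\moplaxn{{\left(\Grothdiag{W}\right)}\op}{\CAT}}{\Delta 1}{G\c\groth{W}}$$
which is $2$-natural in $U$.

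First I would build the comparison on objects. Given a $2$-natural transformation $\sigma\:W\aR{}G$, define $\widehat{\sigma}\:\Delta 1\aoplaxn{}G\c\groth{W}$ by taking its component at $(A,x)\in\Groth{W}$ to be the element $\sigma_A(x)\in G(A)=(G\c\groth{W})(A,x)$, i.e.\ the corresponding functor $1\to G(A)$. Every $1$-cell of ${\left(\Grothdiag{W}\right)}\op$ decomposes into a cartesian one $\p{f,\id{}}$ and a \emph{vertical} one (a $1$-cell lying over an identity of $\A$, internal to a fibre $W(A)$); on vertical $1$-cells the structure $2$-cell of $\widehat{\sigma}$ is read off from the action of the functor $\sigma_A\:W(A)\to G(A)$, and on cartesian $1$-cells it is the identity — which is legitimate precisely because $2$-naturality of $\sigma$ makes the relevant square commute strictly, i.e.\ $\widehat{\sigma}$ is cartesian-marked. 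One checks that the oplax-naturality axioms for $\widehat{\sigma}$ reduce to functoriality of the $\sigma_A$ together with $2$-naturality of $\sigma$. Conversely, from a cartesian-marked oplax cocone $\tau\:\Delta 1\aoplaxn{}G\c\groth{W}$ one recovers $\sigma$ by $\sigma_A(x)\deq\tau_{(A,x)}$, defining $\sigma_A$ on the morphisms of $W(A)$ from the structure $2$-cells of $\tau$ on vertical $1$-cells; the $2$-naturality of $\sigma$ in the $\A$-variable is exactly the vanishing of the structure $2$-cells of $\tau$ on cartesian $1$-cells. These two assignments are mutually inverse.

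Then I would extend the correspondence to morphisms and check $2$-naturality. A morphism in $\HomC{\m{\A\op}{\CAT}}{W}{G}$ is a modification between $2$-natural transformations, and unravelling the modification axioms it matches bijectively — again with the cartesian-marking automatic on cartesian $1$-cells — with a modification between the associated cartesian-marked oplax cocones; this makes the comparison full and faithful, hence an isomorphism of categories. Both constructions are manifestly compatible with postcomposing $G$ by the map $G\to G'$ induced by a $1$-cell $U\to U'$ of $\C$, so the isomorphism is $2$-natural in $U$. Applying the $2$-categorical Yoneda lemma to the $2$-functors $\HomC{\C}{\wcolim{W}{F}}{-}$ and $\HomC{\C}{\oplaxncolim{\left(F\c\groth{W}\right)}}{-}$ yields $\wcolim{W}{F}\iso\oplaxncolim{\left(F\c\groth{W}\right)}$.

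The main obstacle is the bookkeeping in the middle step: one must verify that the decomposition of an arbitrary $1$-cell of ${\left(\Grothdiag{W}\right)}\op$ into cartesian and vertical parts is compatible with the structure $2$-cells, so that prescribing the latter on the two kinds of generators really determines a coherent cartesian-marked oplax transformation, and that this compatibility persists at the level of $2$-cells, where the modification and oplax-naturality identities must be matched against the $2$-dimensional strictness of $\sigma$. Keeping every $\opn{op}$, every whiskering and every coherence constraint in the right place here is the real work; the cartesian markers are exactly the device that converts the a priori merely oplax data over $\Groth{W}$ into strict $2$-natural data over $\A$. An alternative, less hands-on route would write $W$ as a weighted colimit of representables, use that $\groth{-}$ turns this presentation into the appropriate colimit of the $\Groth{(-)}$'s, and reduce to the Yoneda case; but the direct comparison above is the more elementary one.
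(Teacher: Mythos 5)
This theorem is recalled in the paper without proof: the text explicitly attributes it to Street and to the author's companion paper, so there is no in-paper argument to compare against. Your representability argument — reduce both sides to the hom-category isomorphism $\HomC{\m{\A\op}{\CAT}}{W}{G}\iso\HomC{\moplaxn{{\left(\Grothdiag{W}\right)}\op}{\CAT}}{\Delta 1}{G\c\groth{W}}$ established by factoring every $1$-cell of $\Groth{W}$ into a vertical part and a cartesian part, then conclude by Yoneda — is correct and is essentially the ``more elementary proof'' strategy of the cited reference, with the cartesian marking doing exactly the job you describe of forcing strict naturality in the $\A$-direction. The one point you should make explicit when filling in the bookkeeping is the $2$-cells of $\Groth{W}$ (present whenever $\A$ is a genuine $2$-category): the oplax-naturality axiom on these must be matched against the $2$-dimensional component of the $2$-naturality of $\sigma$, in addition to the $1$-cell generators you treat.
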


\begin{prop}[{\cite[\remx{2.20}]{mesiti_twosetenrgrothconstrlaxnlimits}}]\label{propstillpreserved}
	A weighted $2$-colimit is preserved or reflected precisely when its associated cartesian-marked oplax conical colimit is so.
\end{prop}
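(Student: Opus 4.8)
The plan is to exploit that the reduction of Theorem~\ref{redoplaxnormalconical} is not merely an abstract isomorphism of colimit objects, but is implemented by a transformation natural in the diagram being colimited. Write $K\deq F\c\groth{W}\:\Groth{W}\to\C$ for the diagram whose cartesian-marked oplax conical colimit is at stake. The proof of Theorem~\ref{redoplaxnormalconical} in our~\cite[\thex{2.19}]{mesiti_twosetenrgrothconstrlaxnlimits} produces, for every $2$-functor $H\:\A\op\to\CAT$, a $2$-natural isomorphism
$$\Theta_H\:\HomC{\m{\A\op}{\CAT}}{W}{H}\aiso\HomC{\moplaxn{{\left(\Grothdiag{W}\right)}\op}{\CAT}}{\Delta 1}{H\c\groth{W}},$$
which is moreover $2$-natural in $H\in\m{\A\op}{\CAT}$ and encodes the universal property of the $2$-category of elements $\groth{W}\:\Groth{W}\to\A$; concretely $\Theta_H$ sends a $W$-weighted cocone over a diagram with ``codomain presheaf'' $H$ to the cartesian-marked oplax cocone obtained by reindexing along $\groth{W}$. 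Evaluating at $H=\HomC{\C}{F(-)}{U}$ and letting $U$ vary recovers the comparison $\wcolim{W}{F}\iso\oplaxncolim{K}$ of Theorem~\ref{redoplaxnormalconical}; along this isomorphism, $\Theta$ carries the universal $W$-weighted cocone $\lambda$ of $\wcolim{W}{F}$ to the universal cartesian-marked oplax cocone $\mu$ of $\oplaxncolim{K}$, so that $\mu=\Theta(\lambda)$.

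Fix a $2$-functor $G\:\C\to\E$. The crucial point is that $\Theta$ is compatible with whiskering by $G$. Indeed, for each object $C$ of $\C$ the $2$-functor $G$ induces a morphism $\HomC{\C}{F(-)}{C}\to\HomC{\E}{(G\c F)(-)}{G(C)}$ of $\CAT$-valued presheaves on $\A$, and similarly one with $K$ and $G\c K$ in place of $F$ and $G\c F$; the $2$-naturality of $\Theta_{(-)}$ at this morphism asserts precisely that whiskering a $W$-weighted cocone for $F$ by $G$ and then applying $\Theta$ gives the same cartesian-marked oplax cocone as first applying $\Theta$ (obtaining a cocone for $K$) and then whiskering by $G$. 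Applying the same $2$-naturality to the presheaf morphisms $h\c-\:\HomC{\E}{(G\c F)(-)}{V}\to\HomC{\E}{(G\c F)(-)}{U}$ induced by $1$-cells $h\:V\to U$ of $\E$ shows, in the same way, that $\Theta$ is also compatible with whiskering cocones by $1$-cells between vertices.

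These compatibilities yield the statement. Set $L=\wcolim{W}{F}\iso\oplaxncolim{K}$, with universal cocones $\lambda$ and $\mu=\Theta(\lambda)$. By definition, $G$ preserves $\wcolim{W}{F}$ iff $G\c\lambda$, regarded as a $W$-weighted cocone for $G\c F$ with vertex $G(L)$, is universal, i.e.\ iff for every $U\in\E$ whiskering with $G\c\lambda$ is an isomorphism $\HomC{\E}{G(L)}{U}\to\HomC{\m{\A\op}{\CAT}}{W}{\HomC{\E}{(G\c F)(-)}{U}}$; likewise $G$ preserves $\oplaxncolim{K}$ iff for every $U$ whiskering with $G\c\mu$ is an isomorphism $\HomC{\E}{G(L)}{U}\to\HomC{\moplaxn{{\left(\Grothdiag{W}\right)}\op}{\CAT}}{\Delta 1}{\HomC{\E}{(G\c K)(-)}{U}}$. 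For each fixed $U$ these two whiskering maps have common domain $\HomC{\E}{G(L)}{U}$ and, by the compatibilities above (namely $G\c\mu=\Theta(G\c\lambda)$, together with compatibility of $\Theta$ with whiskering by $1$-cells $h\:G(L)\to U$), are intertwined by the isomorphism $\Theta_{\HomC{\E}{(G\c F)(-)}{U}}$ on their codomains; hence one is an isomorphism iff the other is. Therefore $G$ preserves the weighted colimit iff it preserves the associated cartesian-marked oplax conical one. The statement for reflection is obtained by the very same argument with $L$ replaced by an arbitrary vertex: given a $W$-weighted cocone $\sigma$ for $F$, one has $\Theta(G\c\sigma)=G\c\Theta(\sigma)$ and $\Theta(\sigma)$ is universal iff $\sigma$ is, so $G\c\sigma$ witnesses universality of $\sigma$ exactly when $G\c\Theta(\sigma)$ witnesses universality of $\Theta(\sigma)$.

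The main obstacle is the first step: one has to know that the reduction of Theorem~\ref{redoplaxnormalconical} is genuinely $2$-natural in the variable $H$, so that it commutes with whiskering by an arbitrary $2$-functor. This is the heart of the proof of~\cite[\thex{2.19}]{mesiti_twosetenrgrothconstrlaxnlimits}, where $\Theta$ is built from the universal property of the $2$-category of elements; granting it, the remaining argument is a formal diagram chase with the two naturalities of $\Theta$ (in $H$ and in the vertex).
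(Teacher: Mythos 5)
Your proposal is correct and is essentially the intended argument: the paper itself gives no proof here (it cites \cite[\remx{2.20}]{mesiti_twosetenrgrothconstrlaxnlimits}), and the content of that remark is exactly the formal observation you make, namely that the isomorphism of \thex\ref{redoplaxnormalconical} is natural in the target presheaf $H$, hence commutes both with whiskering by a $2$-functor $G$ and with postcomposition by $1$-cells out of the vertex, so the two comparison functors are intertwined by an isomorphism and universality of the one cocone is equivalent to universality of the other. The only ingredient you defer — $2$-naturality of $\Theta_H$ in $H$ — is indeed supplied by the cited \thex{2.19}, so no gap remains.
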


\begin{exampl}\label{univoplaxnormalcoconepresheaves}
	Every $2$-presheaf $W\:\A\op\to\CAT$ with $\A$ small can be expressed as
	$$W\iso \wcolim{W}{\yy}\iso \oplaxncolim{\left(\yy\c\h{\groth{W}}\right)}.$$
	The universal cartesian-marked oplax cocone is given by
	$$\forall \fib[o][2.4]{(B,X')}{(f,\nu)}{(A,X)} \text{ in } {\Grothdiag{W}} \quad\quad\quad{
		\begin{cd}*[3.8][1.8]
			\y{A} \arrow[rr,"{\ceil{X}}",""'{name=Q}] \arrow[d,"{\y{f}}"'] \&\& W\\
			\y{B} \arrow[rru,bend right,"{\ceil{X'}}"']\& \phantom{.} \arrow[Rightarrow,from=Q,"{\ceil{\nu}}"{pos=0.36},shift right=1.5ex,shorten <=1ex,shorten >= 2.7ex]
	\end{cd}}$$
	
	In particular, taking $\A=\1$, $W$ is a small category $\D$ and $\groth{W}$ is ${\D}\to{\1}$. We obtain that $\1$ is ``cartesian-marked oplax conical dense", building $\D$ with universal cocone
	$$\forall \fib[o]{D}{f}{C} \text{ in } \D \quad\quad\quad {\begin{cd}*[4][2]
			\1 \arrow[rr,"C",""'{name=Q}] \arrow[d,equal] \&\& \D\\
			\1 \arrow[rru,bend right,"D"']\& \phantom{.} \arrow[Rightarrow,from=Q,"{f}"{pos=0.355},shift right=1.5ex,shorten <=0.8ex,shorten >= 2.45ex]
	\end{cd}}$$
\end{exampl}

\section{Reduction of 2-classifiers to dense generators}\label{sectionreductiontogenerators}

In this section, we present a novel reduction of the study of a 2-classifier to dense generators. This is organized in the three Theorems~\ref{faith},~\ref{fullness}~and~\ref{esssurj}.
 More precisely, we prove that all the conditions of 2-classifier (see \defx\ref{def2clas}) can just be checked on those objects $F$ that form a dense generator (see \defx\ref{defdense}). The study of what is classified by a 2-classifier is similarly reduced to a study over the objects that form a dense generator. We also give a concrete recipe to build the characteristic morphisms.

 This result offers great benefits. For example, applied to $\Cat$, it reduces all the major properties of the Grothendieck construction to the trivial observation that everything works well over the singleton category (see \exax\ref{exampleredincat}).

We will apply our theorems of reduction to dense generators of this section to the cases of 2-presheaves (i.e.\ prestacks) and stacks, in Sections~\ref{sectiontwoclassintwopresheaves} and~\ref{sectiontwoclassinstacks}. This will allow us to find a good 2-classifier in 2-presheaves that classifies all discrete opfibrations with small fibres and to restrict it to a good 2-classifier in stacks.

In \thex\ref{teorfactorization}, we prove a general result of restriction of good 2-classifiers to nice sub-2-categories, involving factorization arguments and our theorems of reduction to dense generators. This is what we will use to produce our good 2-classifier in stacks.

Throughout this section, we fix a discrete opfibration $\tau\:{\O\b}\to{\O}$ in \L. Recall from \defx\ref{def2clas} that $\tau$ is a 2-classifier if for every $F\in \L$ the functor
$$\G{\tau,F}\:\HomC{\L}{F}{\O}\to \Fib[b][n][F]$$
is fully faithful. And that, provided that this is the case, the essential image of such functors precisely represents which discrete opfibrations are classified.

The following proposition will often be useful.

\begin{prop}\label{corollpseudonatG}
	Let $p\:{E}\to{B}$ be a discrete opfibration in \L. For every pair of composable morphisms $H\ar{y} F\ar{z} B$ in $\L$, the pseudo-naturality of $\G{p,-}$ \pteor{see \prox\ref{pseudonatG}} gives isomorphisms
	\begin{equation}\label{eqiso}
		\G{p,H}(z\c y)\iso y\st \G{p,F}(z) = \G{\G{p,F}(z),H}(y),
	\end{equation}
	where $y\st$ is the functor $\Fib[b][n][y]$ defined in the proof of \prox\ref{DOpFib-}.
	
	Moreover, given a diagram\v[-2]
	\begin{cd}
		H\arrow[r,bend left,"{y}",""'{name=A}]\arrow[r,bend right,"{y'}"',""{name=B}]\&F \arrow[r,bend left,"{z}",""'{name=C}]\arrow[r,bend right,"{z'}"',""{name=D}]\&B
		\arrow[from=A,to=B,Rightarrow,"{\alpha}"]
		\arrow[from=C,to=D,Rightarrow,"{\beta}"]
	\end{cd}
	in \L, the isomorphisms above form the following two commutative squares:
	\begin{eqD*}
		\begin{cd}*[5.2][5.2]
			\G{p,H}(z\c y) \arrow[d,"{\G{p,H}(z\h\alpha)}"'] \arrow[r,aiso] \& \G{\G{p,F}(z),H}(y) \arrow[d,"{\G{\G{p,F}(z),H}(\alpha)}"] \\
			\G{p,H}(z\c y') \arrow[r,aiso]\& \G{\G{p,F}(z),H}(y')
		\end{cd}\quad
		\begin{cd}*[5.2][5.2]
			\G{p,H}(z\c y) \arrow[d,"{\G{p,H}(\beta\h y)}"'] \arrow[r,aiso] \& \G{\G{p,F}(z),H}(y) \arrow[d,"{y\st \G{p,F}(\beta)}"] \\
			\G{p,H}(z'\c y) \arrow[r,aiso]\& \G{\G{p,F}(z'),H}(y)
		\end{cd}
	\end{eqD*}
\end{prop}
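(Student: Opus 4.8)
The plan is to derive everything from the pseudo-naturality of $\G{p,-}$ established in \prox\ref{pseudonatG}, together with the explicit description of the functors $y\st$ and of $\G{p,F}$ on 2-cells from the proofs of \prox\ref{DOpFib-} and \prox\ref{Gpf}. First I would recall that pseudo-naturality of the family $\fami{\G{p,F}}{F}$ means we have, for each morphism $y\:H\to F$ in $\L$, an invertible 2-cell (a natural isomorphism of functors)
\begin{cd}*
	\HomC{\L}{F}{B} \arrow[r,"{\G{p,F}}"]\arrow[d,"{y^*}"'] \& \Fib[b][n][F] \arrow[d,"{\Fib[b][n][y]}"]\\
	\HomC{\L}{H}{B} \arrow[r,"{\G{p,H}}"'] \& \Fib[b][n][H]
\end{cd}
whose component at an object $z\:F\to B$ is exactly the isomorphism $\G{p,H}(z\c y)\iso y\st\G{p,F}(z)$ claimed in~\refs{eqiso}; here I use that the left vertical is precomposition $z\mapsto z\c y$ by definition of the Yoneda-type action, and the right vertical is $\Fib[b][n][y]=y\st$ by \prox\ref{DOpFib-}. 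The equality $y\st\G{p,F}(z)=\G{\G{p,F}(z),H}(y)$ is immediate by definition of $\G{-,-}$: pulling back the discrete opfibration $\G{p,F}(z)$ along $y$ is exactly computing $\G{\G{p,F}(z),H}(y)$, since both are the chosen pullback of that discrete opfibration along $y\:H\to F$. That disposes of~\refs{eqiso}.

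For the two commutative squares, I would invoke naturality of the pseudo-naturality isomorphism in the two slots separately. For the first square: the natural isomorphism displayed above is, by definition of pseudo-naturality, a 2-cell in the relevant functor 2-category, hence in particular a \emph{natural} transformation between the two composite functors $\HomC{\L}{F}{B}\to\Fib[b][n][H]$. Applying this naturality to the 2-cell $\alpha\:y\aR{}y'$ — which acts on the left vertical legs by sending $z^*\Rightarrow (y')^*$ via $\G{p,F}$-whiskering, i.e.\ $\G{p,H}(z\h\alpha)$ on the bottom-left, and $\G{p,F}(z)^*\Rightarrow\G{p,F}(z)^*$... no: more carefully, the first square is naturality of $z\mapsto\bigl(\G{p,H}(z\c y)\iso y^*\G{p,F}(z)\bigr)$ in the \emph{morphism} $y$, evaluated at the 2-cell $\alpha\:y\aR{}y'$; pseudofunctoriality/pseudo-naturality packages this as the statement that the square commutes, where the vertical arrows are the actions of $\alpha$ on both sides. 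Concretely $\G{p,H}(z\h\alpha)$ is the image under the functor $\G{p,H}$ of the whiskered 2-cell $z\star\alpha\:z\c y\aR{}z\c y'$, and $\G{\G{p,F}(z),H}(\alpha)$ is the component of the natural transformation $\Fib[b][n][\alpha]=\alpha^*$ at the discrete opfibration $\G{p,F}(z)$, which by the proof of \prox\ref{DOpFib-} is precisely $\G{\G{p,F}(z),H}(\alpha)$. So the first square is an instance of the 2-naturality of the pseudo-naturality datum, unwound through these definitional identifications.

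For the second square I would argue dually: fixing $y\:H\to F$, the pseudo-naturality isomorphism $z\mapsto\bigl(\G{p,H}(z\c y)\iso y^*\G{p,F}(z)\bigr)$ is a natural transformation in $z$ between two functors $\HomC{\L}{F}{B}\to\Fib[b][n][H]$, namely $z\mapsto\G{p,H}(z\c y)$ and $z\mapsto y^*\G{p,F}(z)$. Its naturality applied to the 2-cell $\beta\:z\aR{}z'$ yields exactly the second square: on the left the functor sends $\beta$ to $\G{p,H}\bigl((\beta\star y)\bigr)=\G{p,H}(\beta\h y)$, and on the right it sends $\beta$ to $y^*$ applied to $\G{p,F}(\beta)$, i.e.\ $y^*\G{p,F}(\beta)$. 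The only genuine content to check is the compatibility of $\G{p,-}$ on 2-cells with this identification — i.e.\ that the liftings-along-$p$ construction defining $\G{p,F}$ on 2-cells in \prox\ref{Gpf} and the pullback-pasting construction defining $\Fib[b][n][\beta]$ in \prox\ref{DOpFib-} agree with the pseudo-naturality data — but this is exactly the coherence already built into \prox\ref{pseudonatG}, whose proof is "straightforward, using the universal property of the pullback and the uniqueness of the liftings through a discrete opfibration". The main obstacle, and the only place where care is needed, is precisely bookkeeping these definitional translations (precomposition $=$ left vertical; $\Fib[b][n][y]=y^*$; $y^*\G{p,F}(z)=\G{\G{p,F}(z),H}(y)$; $\G{p,H}$ of a whiskering $=$ the appropriate composite), after which the two squares are literally the 2-naturality of a pseudo-natural transformation in each of its two arguments; so I would present the proof as "unwind the definitions and apply \prox\ref{pseudonatG}", with the uniqueness of liftings guaranteeing there is nothing further to verify.
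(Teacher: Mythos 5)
Your proposal is correct and follows essentially the same route as the paper: the paper's proof likewise identifies the isomorphisms of equation~\refs{eqiso} with the components of the pseudo-naturality data of $\G{p,-}$, obtains the first square as the $2$-dimensional part of that pseudo-naturality applied to $\alpha$, and the second square as the naturality in $z$ of those isomorphisms. Your version merely spells out the definitional identifications (precomposition, $\Fib[b][n][y]=y\st$, $y\st\G{p,F}(z)=\G{\G{p,F}(z),H}(y)$) that the paper leaves implicit.
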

\begin{proof}
	The proof is straightforward. The first square is given by the $2$-dimensional part of the pseudo-naturality of $\G{p,-}$ applied to the $2$-cell $\alpha$, whereas the second square is given by the naturality in $z$ of the isomorphisms of equation~\refs{eqiso}.
\end{proof}

	We now present the first of our three theorems of reduction to dense generators. This reduces the study of the faithfulness of the functors $\G{\tau,F}$. Such first theorem is much easier than the other two and actually works with any naive generator.

	We also notice that injectivity on objects of the functors $\G{\tau,F}$ can be reduced in a similar way, although this is less interesting for us.

\begin{teor}\label{faith}
	Let \Y be a full subcategory of \L. If for every $Y\in \Y$
	$$\G{\tau,Y}\:\HomC{\L}{Y}{\O}\to \Fib[b][n][Y]$$
	is faithful, then for every $F$ in the closure of $\Y$ in $\L$ under weighted $2$-colimits, also
	$$\G{\tau,F}\:\HomC{\L}{F}{\O}\to \Fib[b][n][F]$$
	is faithful.
	
	In particular, if $\Y$ is a naive generator of $\L$ and $\G{\tau,Y}$ is faithful for every $Y\in \Y$, then $\G{\tau,F}$ is faithful for every $F\in \L$.
\end{teor}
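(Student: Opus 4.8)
The plan is to show that the full subcategory of $\L$ on the objects $F$ for which $\G{\tau,F}$ is faithful is closed under weighted $2$-colimits in $\L$. Since $\Y$ sits inside this subcategory by hypothesis, its closure under weighted $2$-colimits does too, which is the first assertion; and the ``in particular'' is then immediate, because a naive generator exhibits \emph{each} object of $\L$ as a single weighted $2$-colimit of a diagram factoring through $\Y$ (see \defx\ref{defnaivegen}). So fix $W\:\A\op\to\CAT$ with $\A$ small and $K\:\A\to\L$ such that $F\iso\wcolim{W}{K}$ and $\G{\tau,K(A)}$ is faithful for every $A\in\A$; we must prove that $\G{\tau,F}$ is faithful.

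First I would reduce this weighted $2$-colimit to a cartesian-marked oplax conical one via \thex\ref{redoplaxnormalconical}: setting $\t{K}\deq K\c\groth{W}\:\Groth{W}\to\L$, we get $F\iso\oplaxncolim{\t{K}}$, with universal cocone legs $\ell_E\:\t{K}(E)\to F$ for $E\in\Groth{W}$. The only structural fact needed from this is that, by the ($2$-dimensional part of the) universal property in \defx\ref{defoplaxnormalconical}, the whiskering functors $(-)\ast\ell_E\:\HomC{\L}{F}{\O}\to\HomC{\L}{\t{K}(E)}{\O}$ are \emph{jointly faithful on $2$-cells}: two parallel $2$-cells $\beta,\beta'\:z\aR{}z'\:F\to\O$ agree as soon as $\beta\ast\ell_E=\beta'\ast\ell_E$ for every $E\in\Groth{W}$. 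Concretely, under the isomorphism of categories of \defx\ref{defoplaxnormalconical} a $2$-cell out of $F$ corresponds to a modification between the associated cartesian-marked oplax natural transformations, whose component at $E$ is (identified with) the whiskering $\beta\ast\ell_E$, and modifications are tested componentwise; carrying out this unwinding is the one slightly delicate point of the argument, and the step I expect to need the most care.

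With this in hand, suppose $\G{\tau,F}(\beta)=\G{\tau,F}(\beta')$ for $\beta,\beta'\:z\aR{}z'\:F\to\O$. Fix $E\in\Groth{W}$ and apply \prox\ref{corollpseudonatG} with $p=\tau$, $H=\t{K}(E)$ and $y=\ell_E$: its second commutative square identifies, through the fixed isomorphisms of equation~\refs{eqiso} (at $z$ and at $z'$), the morphism $\G{\tau,\t{K}(E)}(\beta\ast\ell_E)$ with $\ell_E\st\G{\tau,F}(\beta)$, and likewise with $\beta$ replaced by $\beta'$. Applying the functor $\ell_E\st$ to the equality $\G{\tau,F}(\beta)=\G{\tau,F}(\beta')$ gives $\ell_E\st\G{\tau,F}(\beta)=\ell_E\st\G{\tau,F}(\beta')$, whence $\G{\tau,\t{K}(E)}(\beta\ast\ell_E)=\G{\tau,\t{K}(E)}(\beta'\ast\ell_E)$ since the flanking isomorphisms do not depend on $\beta$. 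Now $\t{K}(E)=K(\groth{W}(E))$ is one of the objects on which $\G{\tau,-}$ is assumed faithful, so $\beta\ast\ell_E=\beta'\ast\ell_E$; as $E$ was arbitrary, joint faithfulness of the legs yields $\beta=\beta'$. Hence $\G{\tau,F}$ is faithful, establishing the closure claim and therefore the theorem; everything beyond the universal-property bookkeeping of the previous paragraph is a direct application of the pseudo-naturality of $\G{\tau,-}$ packaged in \prox\ref{corollpseudonatG}.
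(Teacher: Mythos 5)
Your proof is correct and follows essentially the same route as the paper's: reduce the equality of two $2$-cells out of $F$ to the equality of their whiskerings along the colimit cocone via the $2$-dimensional universal property, transport the hypothesis $\G{\tau,F}(\alpha)=\G{\tau,F}(\alpha')$ along each leg using the second square of \prox\ref{corollpseudonatG} (whose flanking isomorphisms are independent of the $2$-cell), and conclude by faithfulness over the generating objects. The only difference is that you first pass to the cartesian-marked oplax conical presentation via \thex\ref{redoplaxnormalconical}, whereas the paper works directly with the weighted cocylinder $\Lambda$ and tests the corresponding modifications on the components $\Lambda_i(X)$ --- which are exactly your legs $\ell_E$ for $E=(i,X)\in\Groth{W}$ --- so the detour is harmless but unnecessary for this particular theorem.
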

\begin{proof}
	Let $K\:\I \to \L$ be a $2$-diagram which factors through $\Y$ and has a weighted $2$-colimit $F$ in \L, with weight $W:\I\op \to \CAT$. Call $\Lambda\: W\aR{} \HomC{\L}{K(-)}{F}$ the universal cocylinder of such colimit.
	
	In order to prove that $\G{\tau,F}$ is faithful, take two arbitrary $2$-cells in \L
	\begin{eqD*}
		\tc*{F}{\O}{z}{z'}{\alpha} \quad\text{ and }\quad \tc*{F}{\O}{z}{z'}{\alpha'}
	\end{eqD*}
	such that $\G{\tau,F}(\alpha)=\G{\tau,F}(\alpha')\:\dG{z}\to \dG{z'}$. We prove that $\alpha=\alpha'$.
	
	As $F=\wcolim{W}{K}$ with universal cocylinder $\Lambda$, it suffices to show that the two modifications
	\tcswltwodim[6]{\HomC{\L}{K(-)}{F}}{\HomC{\L}{K(-)}{\O}}{z\c-}{z'\c-}{\alpha\ast-}{}{}{}{W}{\Lambda}
	and $(\alpha'\ast-)\Lambda$ are equal, as we then conclude by the ($2$-dimensional) universal property of the weighted $2$-colimit $F$.
	
	It then suffices to prove that, given arbitrary $i\in \I$ and $X\in W(i)$,
	\begin{eqD*}
		\left(\tcwl*{F}{\O}{z}{z'}{\alpha}{K(i)}{\Lambda_i(X)}\right) \quad = \quad \left(  \tcwl*{F}{\O}{z}{z'}{\alpha'}{K(i)}{\Lambda_i(X)}\right)
	\end{eqD*}
	But as $K(i)\in \Y$ and $\G{\tau,K(i)}$ is faithful by assumption, it then suffices to show that
	\begin{equation}\label{eqDi}
		\G{\tau,K(i)}\left(\alpha\,\Lambda_i(X)\right)=\G{\tau,K(i)}\left(\alpha'\,\Lambda_i(X)\right).
	\end{equation}
	By \prox\ref{corollpseudonatG}, we can write $\G{\tau,K(i)}\left(\alpha\,\Lambda_i(X)\right)$ as the composite
	$$\G{\tau,K(i)}(z\c \Lambda_i(X))\iso \Lambda_i(X)\st\left(\G{\tau,F}(z)\right)\ar{\Lambda_i(X)\st\left(\G{\tau,F}(\alpha)\right)} \Lambda_i(X)\st\left(\G{\tau,F}(z')\right)\iso\G{\tau,K(i)}(z'\c \Lambda_i(X))$$
	and analogously for $\alpha'$. Since such composites for $\alpha$ and for $\alpha'$ are equal, we conclude that equation~\refs{eqDi} holds.
\end{proof}

\begin{rem}
	In order to prove our second and third theorems of reduction to dense generators, we apply our calculus of colimits in 2-dimensional slices, that we explored in~\cite{mesiti_colimitsintwodimslices}. Such calculus generalizes to dimension 2 the well-known fact that a colimit in a 1-dimensional slice is precisely the map from the colimit of the domains of the diagram which is induced by the universal property. It is based on the reduction of weighted 2-colimits to cartesian-marked oplax conical ones, recalled in~\ref{subsectioncartmarkedoplaxcolim}, and on $\F$-category theory (also called enhanced 2-category theory), for which we take as main reference Lack and Shulman's~\cite{lackshulman_enhancedtwocatlimlaxmor}. 
	
	We first need to compare, given $p$ a discrete opfibration in $\L$, the functor $\G{p,F}$ with the 2-functor $p\st$ of change of base between lax slices introduced in~\cite{mesiti_colimitsintwodimslices}.
\end{rem}

\begin{prop}[\cite{mesiti_colimitsintwodimslices}]\label{proptaustextendstoatwofunctor}
	Let $p\:E\to B$ be a discrete opfibration in $\L$. Then pulling back along $p$ extends to a $2$-functor
	$$p\st\:\laxslice{\L}{B}\to \laxslice{\L}{E}$$
	
	Moreover, considering the canonical $\F$-category structure on the lax slice, with the tight part given by the strict slice, $\tau\st$ is an $\F$-functor.
\end{prop}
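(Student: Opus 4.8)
The plan is to construct $p\st$ explicitly, the only nontrivial ingredient being the lifting of $2$-cells along the discrete opfibration $p$. On objects, send $f\:X\to B$ in $\laxslice{\L}{B}$ to the chosen pullback of $p$ along $f$, with projections $q_f\:P_f\to E$ and $r_f\:P_f\to X$, so that $p\c q_f=f\c r_f$; the object $p\st f$ of $\laxslice{\L}{E}$ is $q_f$. On a $1$-cell $(h,\lambda)\:f\to g$ of $\laxslice{\L}{B}$ — say $h\:X\to Y$ and $\lambda\:f\aR{}g\c h$ — whisker $\lambda$ with $r_f$ to obtain $f\c r_f=p\c q_f\aR{}g\c(h\c r_f)$ and lift this along $p$; write $\bar\lambda\:q_f\aR{}\lambda_!$ for the lift, so $p\c\lambda_!=g\c h\c r_f$ and $p\ast\bar\lambda=\lambda\ast r_f$. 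Then $\lambda_!$ and $h\c r_f$ satisfy $p\c\lambda_!=g\c(h\c r_f)$, hence induce by the universal property of $P_g$ a unique $\bar h\:P_f\to P_g$ with $q_g\c\bar h=\lambda_!$ and $r_g\c\bar h=h\c r_f$; set $p\st(h,\lambda)\deq(\bar h,\bar\lambda)$, which is a genuine $1$-cell $q_f\to q_g$ of $\laxslice{\L}{E}$ because $\bar\lambda\:q_f\aR{}\lambda_!=q_g\c\bar h$.

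On a $2$-cell $\sigma\:(h,\lambda)\aR{}(h',\lambda')$ of $\laxslice{\L}{B}$ — i.e.\ $\sigma\:h\aR{}h'$ with $(g\ast\sigma)\c\lambda=\lambda'$ — I would lift the $2$-cell $g\ast(\sigma\ast r_f)\:p\c\lambda_!\aR{}g\c h'\c r_f$ along $p$, getting $\mu\:\lambda_!\aR{}\lambda'_!$: its codomain must be $\lambda'_!$ since $\mu\c\bar\lambda$ then lifts $\big((g\ast\sigma)\c\lambda\big)\ast r_f=\lambda'\ast r_f$ and $\bar\lambda'$ is the unique such lift, and likewise $\mu\c\bar\lambda=\bar\lambda'$. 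The $2$-cells $\mu$ (on the $E$-leg) and $\sigma\ast r_f$ (on the $Y$-leg) become equal after postcomposition with $p$ and $g$ respectively, so the two-dimensional universal property of $P_g$ yields a unique $p\st\sigma\:\bar h\aR{}\bar h'$ with $q_g\ast(p\st\sigma)=\mu$ and $r_g\ast(p\st\sigma)=\sigma\ast r_f$; the lax-slice-over-$E$ constraint $\big(q_g\ast(p\st\sigma)\big)\c\bar\lambda=\bar\lambda'$ is then precisely $\mu\c\bar\lambda=\bar\lambda'$.

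It then remains to check that $p\st$ preserves identity $1$-cells and $2$-cells and respects vertical and horizontal composition of $2$-cells. This is a routine verification in the spirit of the proofs of \prox\ref{Gpf}, \prox\ref{DOpFib-} and \prox\ref{pseudonatG}: one uses repeatedly the uniqueness of liftings through $p$, the one- and two-dimensional universal properties of the chosen pullbacks, and our standing convention that the change of base of an identity is an identity — it is this last point that makes $p\st$ strictly $2$-functorial rather than merely pseudofunctorial. I expect the one genuinely fiddly step to be functoriality on $1$-cells: for composable $(h,\lambda)\:f\to g$ and $(h',\lambda')\:g\to g''$, one must match the lift attached to the composite triangle $\big(h'\c h,\,(\lambda'\ast h)\c\lambda\big)$ with the pasting of the two individual lifts transported through the nested pullbacks $P_f$, $P_g$ and $P_{g''}$, which rests on a ``composition of liftings'' lemma for discrete opfibrations. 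It is still a purely formal argument, so ``obstacle'' here only means ``where the bookkeeping is heaviest''.

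Finally, for the $\F$-category assertion: an $\F$-functor is a $2$-functor carrying tight $1$-cells to tight $1$-cells, and the tight part of the lax slice is the strict slice, so the tight $1$-cells $f\to g$ are the strictly commuting triangles $(h,\id{f})$ with $f=g\c h$. If $\lambda=\id{f}$ then $\lambda\ast r_f$ is an identity $2$-cell, so — identities being their own lifts along $p$ — we get $\lambda_!=q_f$ and $\bar\lambda=\id{q_f}$, whence $p\st(h,\id{f})=(\bar h,\id{q_f})$ with $\bar h$ induced by $q_f$ and $h\c r_f$, again a strictly commuting triangle. Hence $\tau\st$ — and in fact $p\st$ for every discrete opfibration $p$ — is an $\F$-functor, which completes the plan.
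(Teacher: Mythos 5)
Your construction is correct and matches the paper's proof essentially step for step: objects go to chosen pullbacks, a lax triangle $(h,\lambda)$ is handled by lifting $\lambda\ast r_f$ along $p$ and then factoring through the target pullback, and tightness is preserved because identity $2$-cells lift to identities. The only cosmetic difference is that the paper defines the action on $2$-cells as the unique lift along the pulled-back discrete opfibration $r_{g}=\G{p,F'}(z')$ rather than via the two-dimensional universal property of $P_g$ after lifting along $p$; these give the same $2$-cell by uniqueness of liftings.
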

\begin{proof}(Definition of the assignment). Given $z\:F\to B$ in $\L$, we define $p\st z$ as the upper morphism of the chosen pullback square in $\L$ on the left below. 
	
	Given then a morphism $(\widehat{\alpha},\alpha)\:z\to z'$ in $\laxslice{\L}{B}$ as in the middle below, we can lift the $2$-cell in $\L$ on the right below
	\begin{eqD*}
		\sq*[p][6][7]{\dG{z}}{E}{F}{B}{p\st z}{\G{p,F}(z)}{p}{z}
		\qquad\quad \tr*[3.8][5.2][0][0][l][-0.3][\alpha][0.85]{F}{F'}{B}{\widehat{\alpha}}{z}{z'}
		\qquad\quad
		\begin{cd}*[3.1][3.7]
			\dG{z} \arrow[rr,"{p\st z}"]\arrow[d,"{\G{p,F}(z)}"'] \&\& E \arrow[d,"{p}"] \\[-0.4ex]
			F \arrow[rd,"{\widehat{\alpha}}"'{inner sep=0.3ex}]\arrow[rr,"{z}",""'{name=A}] \&\& B \\
			\& F' \arrow[from=A,Rightarrow,"{\alpha}"{pos=0.4},shorten <=0.5ex,shorten >=0.9ex]\arrow[ru,"{z'}"'{inner sep =0.5ex}]
		\end{cd}
	\end{eqD*}
	along the discrete opfibration $p$, producing the unique $2$-cell $p\st{\alpha}\:p\st z\aR{}v\:\dG{z}\to E$ such that $p\c v= z'\c \widehat{\alpha}\c \G{p,F}(z)$ and $p\ast p\st{\alpha}=\alpha\ast \G{p,F}(z)$. By the universal property of the pullback $\dG{z'}$ of $p$ and $z'$, we factorize $v$ through $p\st z'$, obtaining a morphism $\widehat{p\st \alpha}\:\dG{z}\to \dG{z'}$. We define $p\st (\widehat{\alpha},\alpha)$ to be the upper triangle in the following commutative solid:
	\begin{eqD}{pstar}
	\begin{cd}*[4][6.5]
		\dG{z} \arrow[rd,dashed,"{\widehat{p\st \alpha}}"'{pos=0.5,description,inner sep=-0.5ex}]\arrow[rrd,bend left,"{p\st z}",""'{name=A,pos=0.55}] \arrow[dd,"{\G{p,F}(z)}"'{pos=0.35}]\\[-4ex]
		\&[-2ex] G^{z'} \arrow[r,"{p\st {z'}}"] \arrow[Rightarrow,from=A,shift left=0.6ex,"{p\st {\alpha}}"'{pos=0.75}]\& E\arrow[dd,"{p}"]\\
		F\arrow[rd,"{\widehat{\alpha}}"'] \arrow[rrd,bend left,"{z}"{pos=0.53},""'{name=B,pos=0.55}]\\[-4ex]
		\& F' \arrow[uu,leftarrow,crossing over,"{\G{p,F'}(z')}"{pos=0.68}]\arrow[r,"{z'}"'] \arrow[Rightarrow,from=B,shift left=0.6ex,"{\alpha}"{pos=0.3}] \& B
	\end{cd}
	\end{eqD}
	
	Given a $2$-cell $\delta\:(\widehat{\alpha},\alpha)\to (\widehat{\beta},\beta)\:z\to z'$ in $\laxslice{\L}{B}$, we define $p\st \delta$ to be the unique lifting of the $2$-cell $\delta\ast \G{p,F}(z)$ along the Grothendieck opfibration $\G{p,F'}(z')$.
\end{proof}

\begin{rem}\label{remcomparedompstdomgp}
	In order to apply the results of~\cite{mesiti_colimitsintwodimslices}, we need to compare $\dom\c\h p\st$ with $\dom\c\G{p,F}$, given $p\:E\to B$ a discrete opfibration in $\L$.
	
	For every $z\:F\to B$ in $\L$,
	$$\dom(p\st z)=\dom(\G{p,F}(z)).$$
	Given $(\id{},\alpha)\:z\to z'$ in $\laxslice{\L}{B}$, which is just $\alpha\:z\aR{}z'\:F\to B$, we have that
	$$\dom(p\st (\id{},\alpha))=\dom(\G{p,F}(\alpha))$$
	by comparing the diagrams of equations \refs{1} and~\refs{pstar}.
	
	Given a general $(\widehat{\alpha},\alpha)\:z\to z'$ in $\laxslice{\L}{B}$, we can still express $\dom(p\st (\widehat{\alpha},\alpha))=\widehat{p\st \alpha}$ in terms of $\dom(\G{p,F}(\alpha))$. Indeed consider the total pullback $R$ of $p$ with the composite $z'\c\widehat{\alpha}$ and the composite pullback $S$ as below
	\pbsqdrp{S}{\dG{z'}}{E}{F}{F'}{B}{\G{p}(z')\st \widehat{\alpha}}{p\st z'}{\G{\G{p}(z')}(\widehat{\alpha})}{\G{p}(z')}{p}{\widehat{\alpha}}{z'}
	Call $i$ the induced isomorphism between $R$ and $S$. Comparing the diagrams of equations \refs{1} and~\refs{pstar} we obtain that
	$$\dom(p\st (\widehat{\alpha},\alpha))=\G{p,F}(z')\st \widehat{\alpha}\c i\c\dom(\G{p,F}(\alpha))$$
\end{rem}

The following construction will be useful to prove our second and third theorems of reduction to dense generators. It is based on our calculus of colimits in 2-dimensional slices, explored in~\cite{mesiti_colimitsintwodimslices}.

\begin{cons}\label{lemma}
	Let $I\:\Y\to \L$ be a fully faithful dense generator of $\L$, and let $F\in \L$. By \thex\ref{densityintermsofcolim}, there exist a $2$-diagram $J\:\I\to \L$ which factors through $\Y$ and a weight $W\:\I\op\to \CAT$ such that $$F=\wcolim{W}{J}$$ in \L and this colimit is $I$-absolute. By \thex\ref{redoplaxnormalconical}, the $2$-diagram $K\deq J\c \groth{W}\:\Groth{W}\to \L$ factors through $\Y$ and is such that
	$$F=\oplaxncolim{K}.$$
	Moreover this colimit is still $I$-absolute by \prox\ref{propstillpreserved}. Call
	$$\Lambda\: \Delta 1 \aoplaxn{} \HomC{\L}{K(-)}{F}$$
	the universal cartesian-marked oplax cocone that presents such colimit. 
	
	Consider now a discrete opfibration $p\:E\to B$ in $\L$, a morphism $z\:F\to B$ and the chosen pullback in $\L$
	\sq[p][6.2][6.2]{\dG{z}}{E}{F}{B}{p\st z}{\G{p,F}(z)}{p}{z}
	We want to exhibit $\dG{z}$ as a cartesian-marked oplax conical colimit of a diagram constructed from $K$ and $\Lambda$.
	
	By~\cite{mesiti_colimitsintwodimslices}, we can construct from $K$ and $\Lambda$ a $2$-diagram $K'_z=K'\:\Groth{W}\to \laxslice{\L}{B}$ and a universal cartesian-marked oplax cocone $\Lambda'_z=\Lambda'$ which exhibits
	$$\fib{F}{z}{B}=\fib{\oplaxncolim{K}}{z}{B}=\oplaxncolim{K'}$$
	in the lax slice $\laxslice{\L}{B}$. Explicitly, $K'$ is the 2-diagram that corresponds to the cartesian-marked oplax cocone
	$$\lambda^z\:\Delta 1 \aoplaxn{} \HomC{\L}{K(-)}{B}\:{\left(\Grothdiag{W}\right)}\op\to \CAT$$
	associated to $z$. That is,
	\begin{fun}
		K' & \: & \Grothdiag{W}\hphantom{...} & \too & \hphantom{...} \laxslice{\E}{B} \\[1.3ex]
		&& \fibdiag{(C,X)}{(f,\nu)}{(D,X')} & \mto & \tr*[2][3.2][0][0][l][-0.235][\lambda^z_{f,\nu}][1]{K(C,X)}{K(D,X')}{B}{K(f,\nu)}{\lambda^z_{(C,X)}}{\lambda^z_{(D,X')}}\\[1.3ex]
		&& \delta \hphantom{.....}& \mto & \hphantom{.....} F(\delta)
	\end{fun}
	Considering the canonical $\F$-category structure on $\Groth{W}$, with the tight part given by the morphisms of type $(f,\id{})$, we have that $K'$ is an $\F$-diagram.
	
	The universal cartesian-marked oplax cocone $\Lambda'$ has component on $(C,X)$ given by the identity filled triangle (which is thus a tight morphism in $\laxslice{\L}{B}$)
	\tr[2.3][3.5][-1]{K(C,X)}{F}{B}{\Lambda_{(C,X)}}{\lambda^z_{(C,X)}}{z}
	Then $\Lambda'_{f,\nu}=\Lambda_{f,\nu}$ for every morphism $(f,\nu)$ in $\Groth{W}$.
	
	By~\cite{mesiti_colimitsintwodimslices}, the $2$-functor
	$${p}\st\: \laxslice{\L}{B}\to \laxslice{\L}{E}$$
	preserves the colimit $z=\oplaxncolim{K'}$, since $K'$ is an $\F$-diagram, $\Lambda'$ has tight components and the domain of such colimit is $F=\oplaxncolim{K}$, which is $I$-absolute. Then again by~\cite{mesiti_colimitsintwodimslices} the domain $2$-functor $\dom\:\laxslice{\L}{E}\to\L$ preserves the latter colimit $p\st(z)$, since $p\st$ is an $\F$-functor. We obtain that $\dom\c {p}\st\c \Lambda'$ is a universal cartesian-marked oplax cocone that presents
	$$\dG{z}=\oplaxncolim{\left(\dom\c \h{p}\st \c K'\right)}.$$
	
	Explicitly, given $(D,X')\al{(f,\h\nu)}(C,X)$ in $\Groth{W}$, we have that $\dom\p{p\st\p{\Lambda'_{(C,X)}}}$ is the unique morphism into the pullback $\dG{z}$ induced by
	\begin{cd}[3.35][7.3]
		Q^{(C,X)}\arrow[rd,dashed,"{}",shorten <=-0.75ex] \arrow[dd,"{\G{p}\left(K'(C,X)\right)}"']\arrow[rrd,bend left=25,"{p\st {K'(C,X)}}"{inner sep=0ex}]\&[-5.5ex]\\[-3.5ex]
		\& \dG{z}\arrow[r,"{p\st z}"] \& E \arrow[dd,"{p}"]\\
		K(C,X)\arrow[rd,"{\Lambda_{(C,X)}}"'{inner sep=0.1ex}]\arrow[rrd,bend left=25,"{K'(C,X)}"{inner sep=0ex}] \\[-3.5ex]
		\& F\arrow[uu,leftarrow,"{\G{p}(z)}"{pos=0.82,inner sep=0.2ex},crossing over] \arrow[r,"{z}"']\& B
	\end{cd}
	Then $\dom\p{{p}\st\p{\Lambda'_{f,\nu}}}$ is the unique lifting along the discrete opfibration $\G{p,F}(z)$ of the $2$-cell $\Lambda_{f,\nu}\ast {\G{p,K(C,X)}\left(K'(C,X)\right)}$ to $\dom\p{p\st\p{\Lambda'_{(C,X)}}}$.
	
	Notice that in particular this construction can be applied to $z=\id{F}$, exhibiting the domain of any discrete opfibration over $F$ as a cartesian-marked oplax conical colimit (starting from $K$ and $\Lambda$). Remember that we chose pullbacks in $\L$ such that the change of base of an identity is always an identity. Notice also that, given $z\:F\to B$,
	$$\lambda^z=(z\c -)\c \lambda^{\id{}}$$
	Whence we can express the $\F$-functor $K'_z$ constructed from $z$ in terms of the one constructed from $\id{}$:
	$$K'_z=(z\c -)\c K'_{\id{}}$$
	Then the components and the structure 2-cells of $\Lambda'_z$ and $\Lambda'_{\id{}}$ have the same domains, which determine them.
\end{cons}

	We now present the second of our three theorems of reduction to dense generators. This reduces the study of the fullness of the functors $\G{\tau,F}$, provided that we already know their faithfulness. Indeed such a theorem builds over \thex\ref{faith}.

\begin{teor}\label{fullness}
	Let $I\:\Y\to \L$ be a fully faithful dense generator of $\L$. If for every $Y\in \Y$
	$$\G{\tau,I(Y)}\:\HomC{\L}{I(Y)}{\O}\to \Fib[b][n][I(Y)]$$
	is fully faithful, then for every $F\in \L$ also
	$$\G{\tau,F}\:\HomC{\L}{F}{\O}\to \Fib[b][n][F]$$
	is full, and hence fully faithful by \thex\ref{faith}, so that $\tau$ is a 2-classifier in $\L$.
\end{teor}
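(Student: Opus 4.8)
The faithfulness of every $\G{\tau,F}$ is already granted by \thex\ref{faith}: a fully faithful dense generator is in particular a naive generator, and each $\G{\tau,I(Y)}$ is faithful by hypothesis. So the content of the statement is the fullness. I fix $F\in\L$, morphisms $z,z'\:F\to\O$ and a morphism $\sigma\:\G{\tau,F}(z)\to\G{\tau,F}(z')$ in $\Fib[b][n][F]$, and I look for a $2$-cell $\alpha\:z\aR{}z'$ with $\G{\tau,F}(\alpha)=\sigma$. The plan is to apply Construction~\ref{lemma} to the dense generator $I$: this produces a $2$-diagram $K\:\Groth{W}\to\L$ factoring through $\Y$, a presentation $F=\oplaxncolim{K}$ with universal cartesian-marked oplax cocone $\Lambda$, and, applying it to $z$ and to $z'$, the induced presentations $\dG{z}=\oplaxncolim{(\dom\c\tau\st\c K'_z)}$ and $\dG{z'}=\oplaxncolim{(\dom\c\tau\st\c K'_{z'})}$ with universal cocones $\mu^z=\dom\c\tau\st\c\Lambda'_z$ and $\mu^{z'}=\dom\c\tau\st\c\Lambda'_{z'}$, where the domain of $\mu^z_{(C,X)}$ is the chosen pullback $Q^{(C,X)}_z$ of $\tau$ along $\lambda^z_{(C,X)}=z\c\Lambda_{(C,X)}$ and $\mu^z_{(C,X)}\:Q^{(C,X)}_z\to\dG{z}$ is the comparison into the pullback $\dG{z}$.

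For each object $(C,X)$ of $\Groth{W}$, I restrict $\sigma$ along the cocone leg $\Lambda_{(C,X)}\:K(C,X)\to F$. Writing $K(C,X)=I(Y)$ with $Y\in\Y$, the change-of-base functor $\Lambda_{(C,X)}\st=\Fib[b][n][\Lambda_{(C,X)}]$ of \prox\ref{DOpFib-}, composed with the pseudo-naturality isomorphisms $\Lambda_{(C,X)}\st\G{\tau,F}(z)\iso\G{\tau,K(C,X)}(\lambda^z_{(C,X)})$ of \prox\ref{corollpseudonatG} (equation~\refs{eqiso}), carries $\sigma$ to a morphism $\sigma_{(C,X)}\:\G{\tau,K(C,X)}(\lambda^z_{(C,X)})\to\G{\tau,K(C,X)}(\lambda^{z'}_{(C,X)})$ in $\Fib[b][n][K(C,X)]$. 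Since $\G{\tau,I(Y)}$ is fully faithful by hypothesis, there is a unique $2$-cell $\alpha_{(C,X)}\:\lambda^z_{(C,X)}\aR{}\lambda^{z'}_{(C,X)}$ with $\G{\tau,K(C,X)}(\alpha_{(C,X)})=\sigma_{(C,X)}$.

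Next I would check that the family $(\alpha_{(C,X)})_{(C,X)}$ constitutes a modification $\lambda^z\to\lambda^{z'}$ between the cartesian-marked oplax cocones $\lambda^z=(z\c-)\c\Lambda$ and $\lambda^{z'}=(z'\c-)\c\Lambda$ valued in $\HomC{\L}{K(-)}{\O}$. Each modification axiom, as well as the cartesian-marked normalization, is an equality of $2$-cells living over an object of $I(\Y)$, hence it may be tested after the faithful functor $\G{\tau,K(C,X)}$; there, by functoriality of $\G{\tau,K(C,X)}$ on pasted $2$-cells together with the naturality squares of \prox\ref{corollpseudonatG}, both sides get identified with one and the same restriction of $\sigma$ along composites of the cocone legs $\Lambda_{(-)}$ and the structure maps $K(f,\nu)$, and therefore coincide (here one uses that $\sigma$ is a genuine morphism of $\Fib[b][n][F]$, commuting strictly with $\G{\tau,F}(z),\G{\tau,F}(z')$ and compatible with the cocone structure coming from Construction~\ref{lemma}). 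Through the isomorphism of categories $\HomC{\L}{F}{\O}\iso\HomC{\moplaxn{{\left(\Grothdiag{W}\right)}\op}{\CAT}}{\Delta 1}{\HomC{\L}{K(-)}{\O}}$ presenting $F=\oplaxncolim{K}$, this modification corresponds to a $2$-cell $\alpha\:z\aR{}z'$ in $\L$, uniquely characterized by $\alpha\ast\Lambda_{(C,X)}=\alpha_{(C,X)}$ for all $(C,X)$.

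Finally I would verify $\G{\tau,F}(\alpha)=\sigma$. Both are morphisms $\G{\tau,F}(z)\to\G{\tau,F}(z')$ in $\Fib[b][n][F]$, and since $\dG{z}=\oplaxncolim{(\dom\c\tau\st\c K'_z)}$ with universal cocone $\mu^z$, it suffices to compare the two $1$-cells after precomposition with each $\mu^z_{(C,X)}$; the whiskerings with the structure $2$-cells of $\mu^z$ then necessarily agree, being determined, as liftings along the discrete opfibration $\G{\tau,F}(z')$, by their source $1$-cells. By \prox\ref{corollpseudonatG} the restriction of $\G{\tau,F}(\alpha)$ along $\Lambda_{(C,X)}$ is $\G{\tau,K(C,X)}(\alpha\ast\Lambda_{(C,X)})=\G{\tau,K(C,X)}(\alpha_{(C,X)})=\sigma_{(C,X)}$, which is precisely the restriction of $\sigma$; transporting this back through $\mu^z_{(C,X)}$ yields $\G{\tau,F}(\alpha)\c\mu^z_{(C,X)}=\sigma\c\mu^z_{(C,X)}$, whence $\G{\tau,F}(\alpha)=\sigma$. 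Thus $\G{\tau,F}$ is full for every $F$, and with \thex\ref{faith} fully faithful, so $\tau$ is a $2$-classifier in $\L$. The main obstacle is the third paragraph: showing that $(\alpha_{(C,X)})$ is a genuine modification requires a careful unwinding of how $\sigma$ interacts with the colimit presentations of $\dG{z}$ and $\dG{z'}$ furnished by Construction~\ref{lemma}, and it is exactly here that working with honest morphisms rather than isomorphism classes is indispensable (cf.\ \remx\ref{remfullness}).
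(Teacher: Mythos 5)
Your proposal is correct and follows essentially the same route as the paper's proof: restrict $\sigma$ along the cocone legs via \prox\ref{corollpseudonatG}, lift through the fully faithful $\G{\tau,K(C,X)}$, verify the modification axioms using faithfulness, induce $\alpha$ by the universal property, and confirm $\G{\tau,F}(\alpha)=\sigma$ against the colimit presentation of $\dG{z}$ from \conx\ref{lemma}. The only (immaterial) divergence is that you present $\dG{z}$ by applying \conx\ref{lemma} to $z$ with $p=\tau$, whereas the paper applies it to $\id{F}$ with $p=\G{\tau,F}(z)$; the two cocones agree up to the canonical isomorphisms of equation~\refs{eqiso} already built into your $\sigma_{(C,X)}$.
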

\begin{proof}
	Let $F\in \L$; we prove that $\G{\tau,F}$ is full. Consider then two morphisms $z,z'\: F\to \O$ in \L and a morphism $h\: \G{\tau,F}(z)\to\G{\tau,F}(z')$ in $\Fib[b][n][F]$. We search for a $2$-cell $\alpha\:z\aR{}z'\:F\to \O$ such that $\G{\tau,F}(\alpha)=h$. The idea is to write $F$ as a colimit and define $\alpha$ by giving its ``components", which we can produce by the fullness of the functors $\G{\tau,I(Y)}$ with the $Y$'s in \Y that generate $F$.
	
	By \thex\ref{densityintermsofcolim}, \thex\ref{redoplaxnormalconical} and \prox\ref{propstillpreserved}, there exists a $2$-diagram $K\: \Groth{W}\to\L$ which factors through $\Y$ and a universal cartesian-marked oplax cocone 
	$$\Lambda\: \Delta 1 \aoplaxn{} \HomC{\L}{K(-)}{F}$$
	that exhibits $F$ as the $I$-absolute cartesian-marked oplax conical colimit of $K$.
	$$F=\oplaxncolim{K}$$
	
	Then, in order to produce $\alpha$, it suffices to give a modification
	\tcstwodim[7.5]{\Delta 1}{\HomC{\L}{K(-)}{\O}}{(z\c-)\c \Lambda}{(z'\c-)\c \Lambda}{\h\theta}{pos=0.51}{pos=0.51}{,outer sep=0.2ex}
	Given $(C,X)\in \Groth{W}$, we will have that
	\begin{eqD*}
		\left(\tcwl*{F}{\O}{z}{z'}{\alpha}{K(C,X)}{\Lambda_{(C,X)}}\right) =\h \theta_{(C,X)} 
	\end{eqD*}
	So, we will need to have that
	$$\G{\tau,K(C,X)}(\theta_{(C,X)})=\G{\tau,K(C,X)}(\alpha\h \Lambda_{(C,X)})$$
	and, by \prox\ref{corollpseudonatG} and the request that $\G{\tau,F}(\alpha)=h$, the right hand side of such equation is equal to the composite
	$$\G{\tau,K(C,X)}(z\c\Lambda_{(C,X)})\iso {\Lambda_{(C,X)}}\st \left(\G{\tau,F}(z)\right) \ar{ {\Lambda_{(C,X)}}\st \left(h\right) }  {\Lambda_{(C,X)}}\st \left(\G{\tau,F}(z')\right)\iso \G{\tau,K(C,X)}(z'\c\Lambda_{(C,X)})$$
	Call $h^{C,X}$ such composite morphism in $\Fib[b][n][K(C,X)]$. Since $K(C,X)\in \Y$, the functor $\G{\tau,K(C,X)}$ is fully faithful by assumption, and hence there exists a unique $2$-cell
	\tc[9][30][pos=0.45][pos=0.45][,shift right=0.8ex]{K(C,X)}{\O}{z\c\Lambda_{(C,X)}}{z'\c\Lambda_{(C,X)}}{\gamma^{C,X}}
	in \L such that $\G{\tau,K(C,X)}(\gamma^{C,X})=h^{C,X}$. We define the component $\theta_{(C,X)}$ of $\theta$ on $(C,X)$ to be such $2$-cell $\gamma^{C,X}$. The faithfulness of the functors $\G{\tau,K(C,X)}$ guarantees that $\theta$ becomes a modification. Indeed, given a morphism $(f,\nu)\:(D,X')\to(C,X)$ in $\Groth{W}$, we need to prove that
	\modifopls[3.35]{1}{\HomC{\L}{K(C,X)}{\O}}{1}{\HomC{\L}{K(D,X')}{\O}}
	{z'\c\Lambda_{(C,X)}}{z\c\Lambda_{(C,X)}}{z'\c\Lambda_{(D,X')}}{z\c\Lambda_{(D,X')}}
	{}{-\c K(f,\h\nu)}
	{\theta_{(C,X)}}{\theta_{(D,X')}}{z'\Lambda_{(f,\nu)}}{z\Lambda_{(f,\h\nu)}}{pos=0.53}{pos=0.53}{pos=0.614,inner sep=0.25ex}{pos=0.53}{outer sep = -0.25ex}{outer sep =-0.25ex}{description,pos=0.44}{description,pos=0.44}{,shorten >= 1.45em, shorten <= 2em}{,shorten >= 1.8em, shorten <= 1.45em}{6.9}{9.2}
	But since $\G{\tau,K(D,X')}$ is faithful, it suffices to prove that
	$$\G{\tau,K(D,X')}\p{z'\Lambda_{(f,\h\nu)}\c \theta^{D,X'}}=\G{\tau,K(D,X')}\p{\theta_{(C,X)}K(f,\h\nu)\c z\h\Lambda_{(f,\h\nu)}}$$
	and hence that
	$$\G{\tau,K(D,X')}\p{z'\Lambda_{(f,\h\nu)}}\c\G{\tau,K(D,X')}\p{ \theta^{D,X'}}=\G{\tau,K(D,X')}\p{\theta_{(C,X)}K(f,\h\nu)}\c\G{\tau,K(D,X')}\p{ z\h\Lambda_{(f,\h\nu)}}$$
	At this point, it is straightforward to see that such equality is given by the naturality square of $\Fib[b][n][\Lambda_{(f,\h\nu)}]$ (see \prox\ref{DOpFib-}) obtained considering the morphism $h\: \G{\tau,F}(z)\to\G{\tau,\O}(z')$ in $\Fib[b][n][F]$. For this, one needs the pseudo-naturality of $\G{\tau,-}$ (\prox\ref{pseudonatG}), the equation $\G{\tau,K(C,X)}(\theta_{(C,X)})=k^{C,X}$ and the analogous one for $(D,X')$, together with both the squares of \prox\ref{corollpseudonatG}. The square on the right of \prox\ref{corollpseudonatG} allows us to calculate\v $\G{\tau,K(D,X')}\p{\theta_{(C,X)}K(f,\nu)}$, whereas the square on the left allows us to compare the components of $\Fib[b][n][\Lambda_{(f,\h\nu)}]$ on $\G{\tau,F(z)}$ and $\G{\tau,F(z')}$ with $\G{\tau,K(D,X')}\p{ z\h\Lambda_{(f,\h\nu)}}$ and the analogous one with $z'$ instead of $z$.\v
	
	Thus we conclude that $\theta$ is a modification, and by the universal property of the colimit $F$ we obtain an induced $2$-cell \tc*{F}{\O}{z}{z'}{\alpha} in \L. It remains to prove that $\G{\tau,F}(\alpha)=h$.
	
	The idea is to conclude such equality by using the uniqueness part of the universal property of a colimit. As $\G{\tau,F}(\alpha)$ and $h$ are morphisms $\G{\tau,F}(z)\to\G{\tau,F}(z')$ in $\Fib[b][n][F]$, both are totally determined by a morphism in $\L$ from $\dG{z}$ to $\dG{z'}$; we call respectively $\G{\tau,F}(\alpha)$ and $h$ such morphisms in \L. By \conx\ref{lemma}, applied to $z=\id{F}$,
	$$\dG{z}=\oplaxncolim{(\dom\c {\G{\tau,F}(z)}\st \c K')},$$
	presented by the universal cartesian-marked oplax cocone $\dom\c {\G{\tau,F}(z)}\st\c \Lambda'$, with $\Lambda'$ and $K'$ constructed from $\Lambda$ and $K$ as in \conx\ref{lemma}.
	
	Then, in order to conclude that $\G{\tau,F}(\alpha)$ and $h$ are equal, it suffices to show that
	\begin{equation}\label{eqgalphah}
		\left(\G{\tau,F}(\alpha)\c-\right)\c\dom\c {\G{\tau,F}(z)}\st\c \Lambda'=\left(h\c-\right)\c\dom\c {\G{\tau,F}(z)}\st\c \Lambda'
	\end{equation}
	as cartesian-marked oplax natural transformations. And the last part of \conx\ref{lemma} gives us an explicit calculation of $\dom\c {\G{\tau,F}(z)}\st\c \Lambda'$ in terms of $\Lambda$. Precisely, given\v an arbitrary morphism $(D,X')\al{(f,\h\nu)}(C,X)$ in $\Groth{W}$, 
	$$\dom\left({\G{\tau,F}(z)}\st\left(\Lambda'_{(C,X)}\right)\right)={\G{\tau,F}(z)}\st \left(\Lambda_{(C,X)}\right)$$
	and $\dom\p{{\G{\tau,F}(z)}\st\p{\Lambda'_{f,\nu}}}$ is the unique lifting of the $2$-cell $\Lambda_{f,\nu}\ast \G{\G{\tau,F}(z),K(C,X)}\left(\Lambda_{(C,X)}\right)$ to ${\G{\tau,F}(z)}\st \left(\Lambda_{(C,X)}\right)$ along $\G{\tau,F}(z)$.
	We now notice that the following two squares are commutative:
	\begin{eqD*}
		\sq*[n][4.75][11.5]{\dG[\G{\tau,F}(z),K(C,X)]{\Lambda_{(C,X)}}}{\dG{z}}{\dG[\G{\tau,F}(z'),K(C,X)]{\Lambda_{(C,X)}}}{\dG{z'}}{{\G{\tau,F}(z)}\st \left(\Lambda_{(C,X)}\right)}{{\Lambda_{(C,X)}}\st \left(\G{\tau,F}(\alpha)\right)}{\G{\tau,F}(\alpha)}{{\G{\tau,F}(z')}\st \left(\Lambda_{(C,X)}\right)}
		\qquad
		\sq*[n][5][11.5]{\dG[\G{\tau,F}(z),K(C,X)]{\Lambda_{(C,X)}}}{\dG{z}}{\dG[\G{\tau,F}(z'),K(C,X)]{\Lambda_{(C,X)}}}{\dG{z'}}{{\G{\tau,F}(z)}\st \left(\Lambda_{(C,X)}\right)}{{\Lambda_{(C,X)}}\st (h)}{h}{{\G{\tau,F}(z')}\st \left(\Lambda_{(C,X)}\right)}
	\end{eqD*}
	Then, to prove that equation~\refs{eqgalphah} holds on component $(C,X)$, it suffices to show that
	$${\Lambda_{(C,X)}}\st\left(\G{\tau,F}(\alpha)\right)={\Lambda_{(C,X)}}\st\left(h\right)$$
	as morphisms in \Fib[b][n][K(C,X)].
	Using \prox\ref{corollpseudonatG}, we see that
	${\Lambda_{(C,X)}}\st\left(\G{\tau,F}(\alpha)\right)$ is equal to
	$${\Lambda_{(C,X)}}\st\left({\G{\tau,F}(z)}\right)\iso \G{\tau,F}(z\c\Lambda_{(C,X)})\ar{\G{\tau,F}\left(\alpha\h\Lambda_{(C,X)}\right)}\G{\tau,F}(z'\c\Lambda_{(C,X)})\iso{\Lambda_{(C,X)}}\st \left({\G{\tau,F}(z')}\right)$$
	and thus is equal to ${\Lambda_{(C,X)}}\st\left(h\right)$ since $\alpha\h\Lambda_{(C,X)}=\theta_{(C,X)}$, by construction of $\theta_{(C,X)}$.\v
	
	\noindent It only remains to prove that equation~\refs{eqgalphah} holds on morphism component $(f,\nu)$. But this is straightforward using the uniqueness of the liftings through a discrete opfibration, that equation~\refs{eqgalphah} holds on object components and the fact that both $\G{\tau,F}(\alpha)$ and $h$ are morphisms of discrete opfibrations.
\end{proof}

\begin{rem}\label{remfullness}
	As anticipated in \remx\ref{remisoclasses}, the possibility of defining the functors $\G{\tau,F}$ to have as codomain a category of isomorphism classes of discrete opfibrations does not work well with the reduction of the study of $2$-classifiers to generators. \thex\ref{fullness} is however the only delicate point one encounters. We can define a category $\DOpFiboi{F}$ which has as objects isomorphism classes of discrete opfibrations and as morphisms collections of morphisms compatible with every possible change of representative for the domain or the codomain. Notice that we then have a full functor
	$$\Fib[b][n][F]\afull{q} \DOpFiboi{F}$$
	The problem with \thex\ref{fullness} is that, from the assumption that for every $Y\in \Y$ the composite
	$$\HomC{\L}{Y}{\O}\ar{\G{\tau,Y}}\Fib[b][n][Y]\afull{q} \DOpFiboi{Y}$$
	is fully faithful, we cannot deduce the analogue of this for every $F\in \L$. Indeed, we can no longer have
	$$\G{\tau,K(C,X)}(\theta_{(C,X)})=h^{C,X}$$
	in $\Fib[b][n][F]$, but only in $\DOpFiboi{F}$. Of course we then only need $\G{\tau,F}(\alpha)=h$ in $\DOpFiboi{F}$, but it seems that there is no way to find the isomorphisms that regulate $\G{\tau,F}(\alpha)=h$ starting from the ones that regulate $\G{\tau,K(C,X)}(\theta_{(C,X)})=h^{C,X}$ for every $(C,X)$. One strategy could be to induce them using the universal property of the colimit, but we cannot guarantee that the isomorphisms that regulate all the $\G{\tau,K(C,X)}(\theta_{(C,X)})=h^{C,X}$ form a cocone.
\end{rem}

We aim at the third of our three theorems of reduction to dense generators. \thex\ref{faith} and \thex\ref{fullness} allow to check the conditions for $\tau\:\O\b\to \O$ to be a $2$-classifier just on a dense generator. We now want to similarly reduce to dense generators the study of what $\tau$ classifies. The following construction will be very important for this. 

Recall that we denote as $\Fib?[b][n][F]$ the full subcategory of $\Fib[b][n][F]$ on the discrete opfibrations that satisfy a fixed pullback-stable property $\opn{P}$ (in our examples, $\opn{P}$ will be the property of having small fibres).

\begin{cons}\label{consesssurj}
	Let $I\:\Y\to \L$ be a fully faithful dense generator of $\L$. Assume that $\tau$ satisfies a pullback-stable property $\opn{P}$ and that for every $Y\in \Y$
	$$\G{\tau,I(Y)}\:\HomC{\L}{I(Y)}{\O}\to \Fib?[b][n][I(Y)]$$
	is an equivalence of categories. Let then $\phi\:G\to F$ be a discrete opfibration in $\L$ that satisfies the property $\opn{P}$. We would like to construct a characteristic morphism $z\:F\to \O$ for $\phi$. That is, a $z$ such that $\G{\tau,F}(z)$ is isomorphic to $\phi$, so that $\phi$ gets classified by $\tau$.
	
	There exist a $2$-diagram $K\: \Groth{W}\to\L$ which factors through $\Y$ and a universal cartesian-marked oplax cocone $\Lambda$ that exhibits $F$ as the $I$-absolute
	$$F=\oplaxncolim{K}$$
	We would like to induce $z$ via the universal property of the colimit $F$. The idea is condensed in the following diagram:
	\begin{cd}[5.2][5.2]
		H^{(C,X)} \PB{rd} \arrow[r,"{}"]\arrow[d,"{\G{\phi}\left(\Lambda_{(C,X)}\right)}"'] \& G \arrow[d,"{\phi}"]\&[1ex] \O\b \arrow[d,"{\tau}"]\\
		K(C,X)\arrow[rr,bend right=25,"{\hphantom{CC}\G{\tau}^{-1}\left(\G{\phi}\left(\Lambda_{(C,X)}\right)\right)}"'{pos=0.428,inner sep=0.15ex},shorten <=-0.4ex] \arrow[r,"{\Lambda_{(C,X)}}"] \& F\arrow[r,dashed,"{z}"] \& \O
	\end{cd}
	
	For every $(C,X)$ in $\Groth{W}$, the change of base $\G{\phi,K(C,X)}(\Lambda_{C,X})$ of $\phi$ along $\Lambda_{C,X}$ satisfies the property $P$ and is thus in the essential image of $\G{\tau,K(C,X)}$. We can then consider the oplax natural transformation $\chi$ given by the composite
	$$\Delta 1 \aoplaxn{\Lambda} \HomC{\L}{K(-)}{F}\apseudo{\G{\phi,K(-)}}\Fib?[b][n][K(-)]\apseudo{{\G{\tau,K(-)}}^{-1}} \HomC{\L}{K(-)}{\O},$$
	where every ${\G{\tau,K(C,X)}}^{-1}$ is a right adjoint quasi-inverse of ${\G{\tau,K(C,X)}}$ giving an adjoint equivalence. The action of such ${\G{\tau,K(C,X)}}^{-1}$ on morphisms $h\:\psi\to \psi'$ is exhibited by the naturality squares of the counit $\eps$
	\begin{cd}[5.5][6]
		{{\G{\tau,K(C,X)}}\left({\G{\tau,K(C,X)}}^{-1}(\psi)\right)}\arrow[d,"{{\G{\tau,K(C,X)}}\left({\G{\tau,K(C,X)}}^{-1}(h)\right)}"']\arrow[r,iso,"{\eps_{\psi}}"'{inner sep=1ex}] \& {\psi} \arrow[d,"{h}"] \\
		{{\G{\tau,K(C,X)}}\left({\G{\tau,K(C,X)}}^{-1}(\psi')\right)} \arrow[r,iso,"{\eps_{\psi'}}"'{inner sep=1ex}]\&{\psi'}
	\end{cd}
	We are also using that for every morphism $(f,\nu)\:(D,X')\al{}(C,X)$ in $\Groth{W}$ the functor $\Fib[b][n][K(f,\nu)]=K(f,\nu)\st$ restricts to a functor between the essential image of ${\G{\tau,K(D,X')}}$ and the essential image of ${\G{\tau,K(C,X)}}$. Moreover, using \prox\ref{pseudonatG}, we have that $\G{\tau,K(-)}^{-1}$ extends to a pseudo-natural transformation. Its structure $2$-cell on any $(f,\nu)\:(D,X')\al{}(C,X)$ in $\Groth{W}$ is given by the pasting
	\begin{cd}[4][3.5]
		\Fib?[b][n][K^{D,X'}] \arrow[r,iso,shift right=2.65ex,"{\eps^{-1}}"'{inner sep=0.7ex},start anchor={[xshift=5.5ex]}]\arrow[rd,equal,"{}"] \arrow[r,"{{\G{\tau}}^{-1}}"] \& \HomC{\L}{K^{D,X'}}{\O} \arrow[d,"{{\G{\tau}}}"]\arrow[r,"{-\c K(f,\nu)}"] \& \HomC{\L}{K^{C,X}}{\O} \arrow[d,"{{\G{\tau}}}"] \arrow[rd,equal,"{}"]\\
		\& \Fib?[b][n][K^{D,X'}] \arrow[ru,Rightarrow,"{{\left(\G{\tau,K(f,\nu)}\right)}^{-1}}"'{inner sep=0.18ex},shorten <=3ex,shorten >=3ex] \arrow[r,"{K(f,\nu)\st}"'] \& \Fib?[b][n][K^{C,X}] \arrow[r,iso,shift left=2ex,"{\eta^{-1}}"{inner sep=0.7ex},start anchor={[xshift=-5ex]}] \arrow[r,"{{\G{\tau}}^{-1}}"'] \& \HomC{\L}{K^{C,X}}{\O} 
	\end{cd}
	where $\eta$ is the unit of the adjoint equivalence, and we denoted $K(C,X)$ as $K^{C,X}$. 
	
	The composite $\chi$ above is readily seen to be a sigma natural transformation (of Descotte, Dubuc and Szyld's~\cite{descottedubucszyld_sigmalimandflatpseudofun}, w.r.t.\ the cartesian marking). That is, $\chi$ is an oplax natural transformation with the structure 2-cells $\chi_{f,\id{}}$ being isomorphisms for every morphism of type $(f,\id{})$ in ${\left(\Groth{W}\right)}\op$. If we were in a bicategorical context, this would then be enough to induce a morphism $z\:F\to \O$, as we will explore in detail in future work. In our strict $2$-categorical context, we further need to be able to ``normalize" such sigma natural transformation, ensuring that the structure 2-cells on the morphisms $(f,\id{})$ are identities. So that the morphisms $\G{\tau}^{-1}\left(\G{\phi}\left(\Lambda_{(C,X)}\right)\right)$ yield a cartesian-marked oplax cocone. Essentially, this means that we can choose good quasi-inverses of the $\G{\tau,K(C,X)}$'s. Such an extra hypothesis is satisfied by $2$-dimensional presheaves (i.e.\ prestacks), see \thex\ref{teor2classinprestacks}. By \thex\ref{teorfactorization}, it is then satisfied by nice sub-2-categories of prestacks, such as the 2-category of stacks (\thex\ref{teor2classinstacks}).
\end{cons}

	We now present the third of our three theorems of reduction to dense generators. Building over \thex\ref{faith} and \thex\ref{fullness}, we reduce to dense generators the study of what a 2-classifier $\tau\:\O\b\to \O$ classifies. The proof is constructive, based on \conx\ref{consesssurj}, so that we also give a concrete recipe for the characteristic morphisms.

\begin{teor}\label{esssurj}
	Let $I\:\Y\to \L$ be a fully faithful dense generator of $\L$. Assume that $\tau$ satisfies a pullback-stable property $\opn{P}$ and that for every $Y\in \Y$
	$$\G{\tau,I(Y)}\:\HomC{\L}{I(Y)}{\O}\to \Fib?[b][n][I(Y)]$$
	is an equivalence of categories. Assume further that, for every discrete opfibration $\phi\:G\to F$ in $\L$ that satisfies the property $\opn{P}$, the sigma natural transformation $\chi$ produced in \conx\ref{consesssurj} \pteor{starting from $\phi$ and some $K$ and $\Lambda$} is isomorphic to a cartesian-marked oplax natural transformation $\aleph$ \pteor{i.e.\ can be normalized}.
	
	Then for every $F\in \L$
	$$\G{\tau,F}\:\HomC{\L}{F}{\O}\to \Fib?[b][n][F]$$
	is essentially surjective, and hence an equivalence of categories by \thex\ref{fullness}.
\end{teor}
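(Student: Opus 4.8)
The plan is to prove essential surjectivity by exhibiting, for an arbitrary discrete opfibration $\phi\colon G\to F$ in $\L$ satisfying $\opn{P}$, a characteristic morphism $z\colon F\to\O$ built exactly as in \conx\ref{consesssurj}, and then to identify $\G{\tau,F}(z)$ with $\phi$ by reducing the comparison to the dense generator via \conx\ref{lemma}.

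So fix such a $\phi\colon G\to F$. By \thex\ref{densityintermsofcolim}, \thex\ref{redoplaxnormalconical} and \prox\ref{propstillpreserved}, I would choose a $2$-diagram $K\colon\Groth{W}\to\L$ factoring through $\Y$ and a universal cartesian-marked oplax cocone $\Lambda\colon\Delta 1 \aoplaxn{}\HomC{\L}{K(-)}{F}$ exhibiting $F$ as the $I$-absolute cartesian-marked oplax conical colimit of $K$. Then form the sigma natural transformation $\chi$ of \conx\ref{consesssurj}; by hypothesis there is an invertible modification $\chi\iso\aleph$ with $\aleph\colon\Delta 1 \aoplaxn{}\HomC{\L}{K(-)}{\O}$ cartesian-marked oplax. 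Since the universal property of the cartesian-marked oplax conical colimit (\defx\ref{defoplaxnormalconical}) is a $2$-natural \emph{isomorphism} of categories, $\aleph$ is of the form $(z\c-)\c\Lambda$ for a unique morphism $z\colon F\to\O$ in $\L$. As $\tau$ satisfies $\opn{P}$ and $\opn{P}$ is pullback-stable, $\G{\tau,F}(z)$ lies in $\Fib?[b][n][F]$; the claim to prove is then that $\G{\tau,F}(z)\iso\phi$ there, which gives essential surjectivity.

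To establish the claim I would reduce it to the generator using \conx\ref{lemma} twice. Applied with $p=\tau$ and the morphism $z$, it exhibits $\dG{z}=\dom(\G{\tau,F}(z))$ as the cartesian-marked oplax conical colimit of $\dom\c\tau\st\c K'_z$, presented by $\dom\c\tau\st\c\Lambda'_z$; applied with $p=\phi$ and $z=\id{F}$, it exhibits $G$ as the cartesian-marked oplax conical colimit of $\dom\c\phi\st\c K'_{\id{F}}$, presented by $\dom\c\phi\st\c\Lambda'_{\id{F}}$. On the object $(C,X)$ the first diagram takes the value $\dom\p{\G{\tau,K(C,X)}(z\c\Lambda_{(C,X)})}$, which by \prox\ref{corollpseudonatG} is the domain of the change of base $\Lambda_{(C,X)}\st\G{\tau,F}(z)$; on the other hand $z\c\Lambda_{(C,X)}=\aleph_{(C,X)}\iso\chi_{(C,X)}=\G{\tau,K(C,X)}^{-1}\p{\G{\phi,K(C,X)}(\Lambda_{(C,X)})}$, so applying the equivalence $\G{\tau,K(C,X)}$ together with its counit gives $\G{\tau,K(C,X)}(z\c\Lambda_{(C,X)})\iso\G{\phi,K(C,X)}(\Lambda_{(C,X)})$ in $\Fib?[b][n][K(C,X)]$, whose domain is the value $H^{(C,X)}$ of the second diagram. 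In particular these isomorphisms live over $K(C,X)$, hence are compatible with the projections to $K(C,X)$ and, postcomposing with $\Lambda_{(C,X)}$, with the evident cocones over $F$ whose legs are $\G{\tau,F}(z)$ and $\phi$ respectively.

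The heart of the proof --- and the step I expect to be the main obstacle --- is to check that this family of pointwise isomorphisms assembles into an invertible modification between $\dom\c\tau\st\c K'_z$ and $\dom\c\phi\st\c K'_{\id{F}}$ intertwining $\dom\c\tau\st\c\Lambda'_z$ with $\dom\c\phi\st\c\Lambda'_{\id{F}}$. This is a pasting computation that must combine both squares of \prox\ref{corollpseudonatG}; the explicit description in \conx\ref{lemma} of the structure $2$-cells of $\dom\c p\st\c\Lambda'$ as liftings of whiskerings of the $\Lambda_{f,\nu}$ along $\G{p,F}(z)$; the facts that $\aleph$ is genuinely cartesian-marked oplax natural and that $\aleph\iso\chi$ as oplax natural transformations, so the normalization interacts correctly with the $\Lambda_{f,\nu}$; the pseudo-naturality of $\G{\phi,-}$ (\prox\ref{pseudonatG}) and of $\G{\tau,K(-)}^{-1}$ as spelled out in \conx\ref{consesssurj} (its structure $2$-cells being built from the units and counits of the chosen adjoint equivalences); and the naturality of those units and counits. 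No new idea beyond careful bookkeeping should be required, since each ingredient is already pinned down. Granting this, the uniqueness clause of the universal property of the cartesian-marked oplax conical colimit yields an isomorphism $g\colon\dG{z}\iso G$ in $\L$; and by the compatibility with the cocones over $F$ noted above, $\phi\c g=\G{\tau,F}(z)$, so $g$ is an isomorphism in $\Fib?[b][n][F]$. Hence $\phi\iso\G{\tau,F}(z)$ and $\G{\tau,F}$ is essentially surjective onto $\Fib?[b][n][F]$. Finally, the standing hypothesis makes each $\G{\tau,I(Y)}$ in particular fully faithful, so \thex\ref{fullness} applies and every $\G{\tau,F}$ is fully faithful; landing in $\Fib?[b][n][F]$ and being essentially surjective onto it, it is an equivalence of categories onto $\Fib?[b][n][F]$, as desired.
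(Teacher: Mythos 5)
Your proposal is correct and follows essentially the same route as the paper's proof: induce $z$ from the normalized cocone $\aleph$, apply \conx\ref{lemma} to both $(\tau,z)$ and $(\phi,\id{F})$ to present $\dG{z}$ and $G$ as cartesian-marked oplax conical colimits, compare the two diagrams via the normalization isomorphism and the counits of the adjoint equivalences, and conclude by uniqueness of colimits together with compatibility over $F$. The "careful bookkeeping" you defer (assembling the pointwise isomorphisms into a $2$-natural comparison of diagrams and cocones) is precisely the computation the paper carries out with the composite pullbacks and a triangular identity of the adjoint equivalence, using exactly the ingredients you list.
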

\begin{proof}
	Let $\phi\:G\to F$ be a discrete opfibration in $\L$ that satisfies the property $\opn{P}$. Consider the associated $\chi$ and $\aleph$ as in the statement. Let $z\:F\to \O$ be the unique morphism induced by $\aleph$ via the universal property of the colimit $F=\oplaxncolim{K}$. We prove that $z$ is a characteristic morphism for $\phi$ with respect to $\tau$.
	
	Consider the pullback
	\sq[p][6.4][6.4]{V}{\O\b}{F}{\O}{\t{z}}{\G{\tau,F}(z)}{\tau}{z}
	We want to prove that there is an isomorphism $j\:G\iso V$ such that $\G{\tau,F}(z)\c j=\phi$. Applying \conx\ref{lemma} to $\phi$ and $\id{F}$, we construct $K'=K'_{\id{}}$ and $\Lambda'=\Lambda'_{\id{}}$ that exhibits
	$$G=\oplaxncolim{\left(\dom\c \h{\phi}\st \c K'\right)}.$$
	Applying again \conx\ref{lemma} to $\tau$ and $z\:F\to \O$ we obtain
	$$V=\oplaxncolim{\left(\dom\c \h\tau\st \c (z\c -)\c K'\right)}.$$
	
	We show that 
	$$\dom\c \h\tau\st \c (z\c -)\c K'\iso \dom\c\h {\phi}\st \c K'.$$
	Notice that $(z\c -)\c K'$ is the $2$-functor $\Groth{W}\to \laxslice{\L}{\O}$ associated to the oplax natural transformation $\aleph$, as described in \conx\ref{lemma}. Since $\aleph\iso \chi$, we obtain that $(z\c -)\c K'\iso U$ where $U$ is the $2$-functor $\Groth{W}\to \laxslice{\L}{\O}$ associated to the sigma natural transformation $\chi$. Moreover the general component $t_{C,X}$ on $(C,X)\in \Groth{W}$ of such isomorphism has first component equal to the identity. This produces an isomorphism
	$$\dom\c \h\tau\st \c (z\c -)\c K'\iso \dom\c \h\tau\st \c U$$
	whose general component on $(C,X)$ is over $K(C,X)$. Indeed by \remx\ref{remcomparedompstdomgp}
	$$\dom\left(\tau\st \left(t_{(C,X)}\right)\right)=\dom \left(\G{\tau,K(C,X)}\left(\pr{2}\left(t_{C,X}\right)\right)\right)$$
	and is thus an isomorphism that makes the following triangle commute:
	\begin{cd}[3.4][-3.2]
		{\dom \left(\G{\tau,K(C,X)}\left(\aleph_{(C,X)}\right)\right)} \arrow[rr,aiso,"{}"']\arrow[rd,"{\G{\tau,K(C,X)}\left(\aleph_{(C,X)}\right)}"',inner sep=-0.3ex]\&\& {\dom \left(\G{\tau,K(C,X)}\left(\chi_{(C,X)}\right)\right)} \arrow[ld,"{\G{\tau,K(C,X)}\left(\chi_{(C,X)}\right)}",inner sep=-0.3ex]\\
		\& {K(C,X)}
	\end{cd}
	In this way, we have handled the isomorphisms given by the normalization process. We now show that
	$$\dom\c \h\tau\st\c U\iso \dom\c\h {\phi}\st \c K'.$$
	Given $(C,X)\in \Groth{W}$, by \remx\ref{remcomparedompstdomgp} and by construction of $\chi$ we have $$\dom\left(\tau\st\left(\chi_{(C,X)}\right)\right)=\dom \left(\G{\tau,K(C,X)}\left(\chi_{(C,X)}\right)\right)\iso\dom \left(\G{\phi,K(C,X)}\left(\Lambda_{(C,X)}\right)\right)=\dom \left(\phi\st \left(\Lambda_{(C,X)}\right)\right).$$
	So the counits of the adjoint equivalences $\G{\tau,K(C,X)}\dashv {\G{\tau,K(C,X)}}^{-1}$ give isomorphisms
	$$\xi_{(C,X)}\:Q^{(C,X)}=\left(\dom\c \h\tau\st\c U\right)(C,X)\aisoo\left(\dom\c\h {\phi}\st \c K'\right)(C,X)=H^{(C,X)}$$
	over $K(C,X)$, where the map to $K(C,X)$ from the right hand side is $\G{\phi,K(C,X)}\left(\Lambda_{(C,X)}\right)$. We prove that these isomorphisms form a $2$-natural transformation $\xi$.
	Take then $(f,\nu)\:(C,X)\to (D,X')$ in $\Groth{W}$. By \remx\ref{remcomparedompstdomgp}, we can express the action of $\dom\c \tau\st$ on the morphism
	$$U(f,\nu)=\h[3]\tr*[2.8][4][0][0][l][-0.235][\chi_{f,\nu}][1]{K(C,X)}{K(D,X')}{\O}{K(f,\nu)}{\chi_{(C,X)}}{\chi_{(D,X')}}$$
	in terms of $\dom \left(\G{\tau,K(C,X)}(\chi_{f,\nu})\right)$. Analogously, we express $\dom\left(\phi\st \left(K(f,\nu),\Lambda_{f,\nu}\right)\right)$ in terms of $\dom \left(\G{\phi,K(C,X)}\left(\Lambda_{f,\nu}\right)\right)$. Consider the composite pullbacks
	\begin{eqD*}
		\pbsqdrpN{S^H}{H^{(D,X')}}{G}{K(C,X)}{K(D,X')}{F}{}{}{}{\G{\phi}\left(\Lambda_{(D,X')}\right)}{\phi}{K(f,\nu)}{\Lambda_{(D,X')}}\quad\h[4] \pbsqdrpN{S^Q}{Q^{(D,X')}}{\O\b}{K(C,X)}{K(D,X')}{\O}{}{}{}{\G{\tau}\left(\chi_{(D,X')}\right)}{\tau}{K(f,\nu)}{\chi_{(D,X')}}
	\end{eqD*}
	and call $R^H$ and $R^Q$\v respectively the pullbacks of $\phi$ and of $\tau$ along the composites. By construction of $\chi$\v and by the action of ${\G{\tau,K(C,X)}}^{-1}$ on morphisms, we calculate that $\G{\tau,K(C,X)}(\chi_{f,\nu})$ is precisely the composite
	$$Q^{(C,X)}\aisoo[\xi_{(C,X)}] H^{(C,X)}\ar{\G{\phi,K(C,X)}\left({\Lambda_{f,\nu}}\right)} R^H\iso S^H\aisoo[K(f,\nu)\st \h\xi_{(D,X')}^{-1}] S^Q\iso R^Q$$
	Notice that we also need one triangular equality of the adjoint equivalence $\G{\tau,K(C,X)}$ to handle the $\eta^{-1}$ part of $\G{\tau,K(f,\nu)}^{-1}$. In order to obtain $\dom \left(\tau\st\left(K(f,\nu),\chi_{f,\nu}\right)\right)$, by \remx\ref{remcomparedompstdomgp}, we compose $\G{\tau,K(C,X)}(\chi_{f,\nu})$ with
	$$R^Q\iso S^Q\ar{} Q^{(D,X')}$$
	We conclude the naturality of $\xi$ by definition of $K(f,\nu)\st \h\xi_{(D,X')}^{-1}$. To prove that $\xi$ is $2$-natural, consider a $2$-cell $\delta\:(f,\nu)\aR{}(f',\nu')\:(C,X)\to (D,X')$ in $\Groth{W}$. Both
	$$\xi_{(D,X')}\ast \left(\dom\c \h\tau\st\c U\right)(\delta)$$
	$$\left(\dom\c\h {\phi}\st \c K'\right)(\delta)\ast \xi_{C,X}$$
	give the unique lifting of $K(\delta)\ast \G{\tau,K(C,X)}\left(\chi_{(C,X)}\right)$ to $\xi_{(D,X')}\c \left(\dom\c \h\tau\st\c U\right)(f,\nu)$ along $\G{\phi,K(D,X')}\left(\Lambda_{(D,X')}\right)$. Thus $\xi$ is $2$-natural. We then obtain a $2$-natural isomorphism
	$$\zeta\:\dom\c \h\tau\st \c (z\c -)\c K'\iso \dom\c\h {\phi}\st \c K'$$
	whose general component on $(C,X)\in \Groth{W}$ is over $K(C,X)$. 
	
	As a consequence, there is an isomorphism $j\:G\iso V$, respecting the universal cartesian-marked oplax cocones $\Theta^G$ and $\Theta^V$ that exhibit the two as colimits:
	\begin{cd}[5.4][5.4]
		\Delta 1 \arrow[r,Rightarrow,"{\Theta^G}","\oplaxn"']\arrow[d,Rightarrow,"{\Theta^V}"',"\oplaxn"]\& {\HomC{\L}{\left(\dom\c \h{\phi}\st \c K'\right)(-)}{G}} \arrow[d,Rightarrow,"{j\c -}"]\\
		{\HomC{\L}{\left(\dom\c \h\tau\st \c (z\c -)\c K'\right)(-)}{V}} \arrow[r,Rightarrow,"{-\c\h \zeta^{-1}_{(-)}\h[2]}"'{inner sep=0.6ex}]\& {\HomC{\L}{\left(\dom\c \h{\phi}\st \c K'\right)(-)}{V}}
	\end{cd}
	We want to show that the following triangle is commutative:
	\begin{eqD}{trianglephi}
		\begin{cd}*[4.5][3.5]
			G \arrow[rr,aiso,"{j}"']\arrow[rd,"{\phi}"']\&\& V \arrow[ld,"{\G{\tau,F}(z)}"{inner sep=0.25ex}]\\
			\& F
		\end{cd}
	\end{eqD}
	Since $G$ is a cartesian-marked oplax conical colimit, it suffices to show that
	$$(\phi\c -)\c \Theta^G=(\G{\tau,F}(z)\c -)\c (j\c -)\c \Theta^G$$
	Whence it suffices to show that
	$$(\phi\c -)\c \Theta^G=\left(\G{\tau,F}(z)\c -\c \h \zeta^{-1}_{(-)}\right)\c \Theta^V$$
	Given $(C,X)\in \Groth{W}$, the two have equal components on $(C,X)$ since by \conx\ref{lemma}
	$$\G{\tau,F}(z)\c \Theta^V_{(C,X)}\c \h \zeta^{-1}_{(C,X)}=\Lambda_{(C,X)}\c \G{\tau,K(C,X)}\left(\aleph_{(C,X)}\right)\c  \h \zeta^{-1}_{(C,X)}=\Lambda_{(C,X)}\c\G{\phi,K(C,X)}\left(\Lambda_{(C,X)}\right),$$
	using that $\zeta_{(C,X)}$ is over $K(C,X)$. Given $(f,\nu)\:(D,X')\al{}(C,X)$ in $\Groth{W}$, also the structure $2$-cells of the two cartesian-marked oplax natural transformations on $(f,\nu)$ are equal, since by \conx\ref{lemma}
	$$\G{\tau,F}(z)\ast \Theta^V_{(f,\nu)}\ast \h \zeta^{-1}_{(C,X)}=\Lambda_{f,\nu}\ast \left(\G{\tau,K(C,X)}\left(\aleph_{(C,X)}\right)\c \h \zeta^{-1}_{(C,X)}\right)=\Lambda_{f,\nu}\ast\G{\phi,K(C,X)}\left(\Lambda_{(C,X)}\right).$$
	Therefore the triangle of equation~\refs{trianglephi} is commutative and $\phi$ is in the essential image of $\G{\tau,F}$.
\end{proof}

\begin{rem}
	In the proof of \thex\ref{esssurj}, we have actually proved the following sharper result, that involves the classification of single discrete opfibrations $\phi$.
	
	We also show that the operation of normalization described in \thex\ref{esssurj} is necessary.
\end{rem}

\begin{coroll}[of the proof of \thex\ref{esssurj}]\label{corollsharperesssurj}
	Let $I\:\Y\to \L$ be a fully faithful dense generator of $\L$. Assume that $\tau\:\O\b\to \O$ is a $2$-classifier in $\L$, and let $\phi\:G\to F$ be an arbitrary discrete opfibration in $\L$. Consider $K$ and $\Lambda$ as in \conx\ref{consesssurj}. The following properties are equivalent:
	\begin{enumT}
		\item $\phi$ is classified by $\tau$, i.e.\ $\phi$ is in the essential image of $\G{\tau,F}$;
		\item for every $(C,X)\in \Groth{W}$ the change of base $\G{\phi,K(C,X)}(\Lambda_{C,X})$ of $\phi$ along $\Lambda_{C,X}$ is in the essential image of $\G{\tau,K(C,X)}$, and the operation of normalization described in \thex\ref{esssurj} starting from $\phi$ is possible.
	\end{enumT}
\end{coroll}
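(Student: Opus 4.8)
The plan is to prove the two implications separately. The implication $(ii)\Rightarrow(i)$ is, almost verbatim, the argument already carried out in the proof of \thex\ref{esssurj}, read off for the single discrete opfibration $\phi$; the implication $(i)\Rightarrow(ii)$ is a short computation based on the pseudo-naturality of $\G{\tau,-}$ recorded in \prox\ref{corollpseudonatG}.

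For $(ii)\Rightarrow(i)$, I would go through the proof of \thex\ref{esssurj} and observe that the standing hypotheses on $\Y$ there are used \emph{only} in two ways, both concerning the fixed $\phi$: first, to guarantee that each change of base $\G{\phi,K(C,X)}(\Lambda_{(C,X)})$ lies in the essential image of $\G{\tau,K(C,X)}$, so that the sigma natural transformation $\chi$ of \conx\ref{consesssurj} can be formed; second, to be allowed to normalize $\chi$ to a cartesian-marked oplax natural transformation $\aleph$. Both of these are exactly what $(ii)$ supplies. Since $\tau$ is assumed to be a $2$-classifier, each $\G{\tau,K(C,X)}$ is fully faithful, hence an equivalence onto its essential image, and so admits a right adjoint quasi-inverse there; throughout \conx\ref{consesssurj} and the proof of \thex\ref{esssurj} one then simply replaces $\Fib?[b][n][K(-)]$ by the essential image of $\G{\tau,K(-)}$, which changes nothing (the stability of these essential images under the functors $K(f,\nu)\st$ already follows from the pseudo-naturality of $\G{\tau,-}$). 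Inducing $z\:F\to\O$ from $\aleph$ via the universal property of $F=\oplaxncolim{K}$, the argument with colimits in $2$-dimensional slices from the proof of \thex\ref{esssurj} then produces an isomorphism $G\iso V$ over $F$ between $\phi$ and $\G{\tau,F}(z)$, so $\phi$ is classified by $\tau$.

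For $(i)\Rightarrow(ii)$, let $z\:F\to\O$ be a characteristic morphism for $\phi$, so $\G{\tau,F}(z)\iso\phi$. For each $(C,X)\in\Groth{W}$, \prox\ref{corollpseudonatG} gives
\[
\G{\phi,K(C,X)}(\Lambda_{(C,X)})\iso\G{\G{\tau,F}(z),K(C,X)}(\Lambda_{(C,X)})={\Lambda_{(C,X)}}\st\left(\G{\tau,F}(z)\right)\iso\G{\tau,K(C,X)}(z\c\Lambda_{(C,X)}),
\]
so each change of base lies in the essential image of $\G{\tau,K(C,X)}$, and the sigma natural transformation $\chi$ of \conx\ref{consesssurj} is defined. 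To exhibit a normalization, take $\aleph$ to be the whiskering $(z\c-)\c\Lambda\:\Delta 1\aoplaxn{}\HomC{\L}{K(-)}{\O}$ of $\Lambda$ by the strictly $2$-natural transformation $z\c-$; this is cartesian-marked, since $\Lambda$ is and the structure $2$-cell of $\aleph$ on any morphism $(f,\id{})$ is $z\ast\Lambda_{f,\id{}}=\id{}$. Applying a right adjoint quasi-inverse ${\G{\tau,K(C,X)}}^{-1}$ to the displayed isomorphism and using a triangle identity yields isomorphisms $\chi_{(C,X)}\iso z\c\Lambda_{(C,X)}=\aleph_{(C,X)}$; the $2$-dimensional part of \prox\ref{corollpseudonatG}, together with the explicit description in \conx\ref{consesssurj} of the structure $2$-cells of $\chi$ — a pasting of the pseudo-naturality $2$-cells of $\G{\phi,K(-)}$ and of $\G{\tau,K(-)}^{-1}$, the latter itself built from the unit and counit of the chosen adjoint equivalences — shows that these componentwise isomorphisms are compatible with the oplax structure $2$-cells of $\chi$ and of $\aleph$, hence assemble into an isomorphism $\chi\iso\aleph$. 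Thus $\chi$ can be normalized, and $(ii)$ holds.

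I expect the main obstacle to be precisely this last compatibility verification in $(i)\Rightarrow(ii)$: checking that the pointwise isomorphisms $\chi_{(C,X)}\iso\aleph_{(C,X)}$ constitute a modification, which forces one to track the structure $2$-cell $\chi_{f,\nu}$ — built from the pseudo-naturality data of $\G{\tau,K(-)}^{-1}$ (hence from the chosen units and counits) and of $\G{\phi,K(-)}$ — against the much simpler $\aleph_{f,\nu}=z\ast\Lambda_{f,\nu}$, and to match the two via \prox\ref{corollpseudonatG} and the faithfulness of $\G{\tau,K(D,X')}$. Everything else is routine: $(ii)\Rightarrow(i)$ is the proof of \thex\ref{esssurj} read off for one $\phi$, and the first half of $(i)\Rightarrow(ii)$ is immediate from the pseudo-naturality of $\G{\tau,-}$.
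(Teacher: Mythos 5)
Your proof is correct and follows the paper's skeleton exactly for $(ii)\Rightarrow(i)$ (reread the proof of \thex\ref{esssurj} with the essential image of $\G{\tau,K(C,X)}$ in place of $\Fib?[b][n][K(C,X)]$, the stability of those essential images under $K(f,\nu)\st$ coming from pseudo-naturality) and for the first half of $(i)\Rightarrow(ii)$ (each $z\c\Lambda_{(C,X)}$ is a characteristic morphism for $\G{\phi,K(C,X)}(\Lambda_{(C,X)})$ by \prox\ref{corollpseudonatG}). Where you genuinely diverge is the normalization step, which you rightly single out as the only delicate point: you keep an arbitrary right adjoint quasi-inverse and then build an invertible modification $\chi\iso\aleph$ with $\aleph=(z\c-)\c\Lambda$, which obliges you to verify that the pointwise isomorphisms $\chi_{(C,X)}\iso z\c\Lambda_{(C,X)}$ are compatible with the structure $2$-cells $\chi_{f,\nu}$ --- a real diagram chase through the units, counits and both squares of \prox\ref{corollpseudonatG}, which you assert but do not carry out. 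The paper sidesteps this verification entirely by \emph{choosing} the quasi-inverse on the relevant objects so that ${\G{\tau,K(C,X)}}^{-1}(\G{\phi,K(C,X)}(b))=z\c b$ on the nose, with counit given by the pseudofunctoriality of pullbacks; then $\chi_{(C,X)}=z\c\Lambda_{(C,X)}$ strictly, the components already satisfy $\chi_{(C,X)}=\chi_{(D,F(f)(X))}\c K(f,\id{})$, and all that remains is $\chi_{f,\id{}}=\id{}$, which follows from the faithfulness of $\G{\tau,K(C,X)}$ together with the compatibility of iterated pullbacks. Since normalizability is invariant under changing the quasi-inverses (different choices give isomorphic $\chi$'s), both routes prove the same statement; the paper's choice simply trades your modification verification for a one-line faithfulness argument, and you may prefer to adopt it so as not to have to write out the chase you flag as the main obstacle.
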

\begin{proof}
	The proof of $(ii)\aR{}(i)$ is exactly as the proof of \thex\ref{esssurj}, using the essential image of $\G{\tau,K(C,X)}$ in place of $\Fib?[b][n][K(C,X)]$.
	
	We prove $(i)\aR{}(ii)$. By assumption, there exists a characteristic morphism $z$ for $\phi$. For every $(C,X)$, we then have that $z\c\Lambda_{(C,X)}$ is a characteristic morphism for $\G{\phi,K(C,X)}(\Lambda_{C,X})$. It remains to prove that the operation of normalization described in \thex\ref{esssurj} starting from $\phi$ is possible. We can choose the quasi-inverse of $\G{\tau,K(C,X)}$ (restricted to its essential image) so that for every $b\:K(C,X)\to F$ in $\L$
	$$\G{\tau,K(C,X)}^{-1}(\G{\phi,K(C,X)}(b))=z\c b$$
	and the component of the counit on $\G{\phi,K(C,X)}(b)$ is given by the pseudofunctoriality of the pullback. Then for every morphism $(f,\id{})\:(D,F(f)(X))\al{} (C,X)$ in $\Groth{W}$
	$$\chi_{(C,X)}=z\c \Lambda_{(C,X)}=z\c \Lambda_{(D,F(f)(X))}\c K(f,\id{})=\chi_{(D,F(f)(X))}\c K(f,\id{}).$$
	In order to prove that $\chi_{f,\id{}}=\id{}$, it suffices to prove that $\G{\tau,K(C,X)}(\chi_{f,\id{}})=\id{}$. It is straightforward to see that this holds, using the recipe described in the proof of \thex\ref{esssurj}. Indeed it is just given by the compatibilities of a pullback along a composite of three morphisms with the composite pullbacks.
\end{proof}

\begin{coroll}\label{corollgoodtwoclas}
	Let $I\:\Y\to \L$ be a fully faithful dense generator of $\L$. Let $\omega\:1\to \O$ be a morphism in $\L$ such that the lax limit of the arrow $\omega$ satisfies a fixed pullback-stable property $\opn{P}$. If for every $Y\in \Y$
	$$\Gg{\omega,I(Y)}\:\HomC{\L}{I(Y)}{\O}\to \Fib?[b][n][I(Y)]$$
	is an equivalence of categories and the operation of normalization described in \thex\ref{esssurj} \pteor{starting from every $\phi$} is possible, then $\omega$ is a good 2-classifier in $\L$ with respect to $\opn{P}$.
\end{coroll}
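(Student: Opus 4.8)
The plan is to reduce the statement to the three theorems of this section by replacing the comma-object classification of $\omega$ with the pullback classification of its lax limit. Write $\tau\:\O\b\to\O$ for the lax limit of the arrow $\omega$. By \remx\ref{remreplacement}, $\tau$ is a discrete opfibration in $\L$ and, up to a suitable choice of representatives for the comma objects, $\Gg{\omega,F}=\G{\tau,F}$ for every $F\in\L$; moreover, since $\tau$ satisfies $\opn{P}$ by hypothesis, each $\Gg{\omega,F}=\G{\tau,F}$ lands in $\Fib?[b][n][F]$, so the parenthetical requirement of \defx\ref{defgood2clas} is automatic. Thus it remains to show that, for every $F\in\L$, the functor $\G{\tau,F}\:\HomC{\L}{F}{\O}\to\Fib?[b][n][F]$ is an equivalence of categories.

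First I would establish that $\tau$ is a $2$-classifier in $\L$. For each $Y\in\Y$, the hypothesis that $\Gg{\omega,I(Y)}=\G{\tau,I(Y)}$ is an equivalence onto $\Fib?[b][n][I(Y)]$ in particular makes it fully faithful as a functor into $\Fib[b][n][I(Y)]$. Applying \thex\ref{fullness} (which builds on \thex\ref{faith}) to the fully faithful dense generator $I\:\Y\to\L$, we conclude that $\G{\tau,F}$ is fully faithful for every $F\in\L$, i.e.\ $\tau$ is a $2$-classifier in $\L$, and hence each $\Gg{\omega,F}$ is fully faithful.

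Next I would invoke \thex\ref{esssurj} to obtain essential surjectivity onto $\Fib?[b][n][F]$. Its hypotheses are met verbatim: $I$ is a fully faithful dense generator, $\tau$ satisfies the pullback-stable property $\opn{P}$, each $\G{\tau,I(Y)}$ is an equivalence onto $\Fib?[b][n][I(Y)]$, and the sigma natural transformation $\chi$ of \conx\ref{consesssurj} can be normalized by assumption. Hence $\G{\tau,F}$ is essentially surjective onto $\Fib?[b][n][F]$ for every $F$, and together with the fully faithfulness of the previous step it is an equivalence of categories. Transporting back along $\Gg{\omega,F}=\G{\tau,F}$, the conditions of \defx\ref{defgood2clas} hold, so $\omega$ is a good $2$-classifier in $\L$ with respect to $\opn{P}$. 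There is no new mathematical content here, so the only delicate point is the bookkeeping: one must check that "$\Gg{\omega,I(Y)}$ is an equivalence onto $\Fib?[b][n][I(Y)]$" simultaneously supplies the fully faithful hypothesis of \thex\ref{fullness} and the equivalence hypothesis of \thex\ref{esssurj}, and that "$\tau$ satisfies $\opn{P}$" is exactly what makes $\tau$ a legitimate would-be $2$-classifier throughout this section while forcing the functors $\Gg{\omega,F}$ to corestrict to $\Fib?[b][n][F]$.
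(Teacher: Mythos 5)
Your proposal is correct and follows essentially the same route as the paper: identify $\Gg{\omega,F}$ with $\G{\tau,F}$ for $\tau$ the lax limit of the arrow $\omega$ via \remx\ref{remreplacement}, and then conclude by \thex\ref{esssurj} (which already incorporates the fully faithfulness supplied by Theorems~\ref{faith} and~\ref{fullness}). The only difference is that you spell out the bookkeeping the paper leaves implicit; this is harmless, though your separate appeal to \thex\ref{fullness} is technically subsumed by the conclusion of \thex\ref{esssurj}.
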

\begin{proof}
	By \remx\ref{remreplacement}, taking $\tau$ to be the lax limit of the arrow $\omega$, we have that $\Gg{\omega,F}=\G{\tau,F}$ for every $F\in \L$. We conclude by \thex\ref{esssurj}.
\end{proof}

\begin{rem}
	The theorems of reduction of the study of 2-classifiers to dense generators offer great benefits. To have an idea of this, we can look at the following example.
	
	In Sections~\ref{sectiontwoclassintwopresheaves} and~\ref{sectiontwoclassinstacks}, we will then apply the theorems of reduction to dense generators to the cases of 2-presheaves (i.e.\ prestacks) and stacks.
\end{rem}

\begin{exampl}\label{exampleredincat}
	The theorems of reduction to dense generators allow us to deduce all the major properties of the Grothendieck construction (or category of elements) from the trivial observation that everything works well over the singleton category.
	
	Indeed the singleton category $\1$ is a dense generator in $\Cat$. So we can just look at the discrete opfibrations over $\1$. Let $\omega=1\:\1\to \Set$. The lax limit $\tau\:\Set\b\to \Set$ certainly has small fibres. In fact one immediately sees that any comma object from $\omega$, say to $F\:\B\to \Set$, gives a discrete opfibration with small fibres, since the fibre over $B\in \B$ is isomorphic to $\HomC{\Set}{1}{F(B)}$. Let $\opn{P}=\opn{s}$ be the property of having small fibres.
	
	The functor
	$$\Gg{\omega,\1}\:\HomC{\CAT}{\1}{\Set}\to \Fib+[b][n][1]\iso\Set$$ 
	sends a functor $\1\to \Set$ to the set it picks, so it is clearly an equivalence of categories. By the theorems of reduction to dense generators, we deduce that the construction of the category of elements is fully faithful and classifies all discrete opfibrations with small fibres (deducing thus the whole \exax\ref{excat}). Indeed, let $p\:\E\to \B$ be a discrete opfibration with small fibres. The density of $\1$ allows us to express $\B$ as a cartesian-marked oplax conical colimit of the constant at 1 functor $\Delta 1$. By \exax\ref{univoplaxnormalcoconepresheaves} we know that the universal cartesian-marked oplax cocone $\Lambda$ is given by
	$$\forall \fib[o]{B'}{f}{B} \text{ in } \B \quad\quad\quad {\begin{cd}*[4][2]
			\1 \arrow[rr,"B",""'{name=Q}] \arrow[d,equal] \&\& \B\\
			\1 \arrow[rru,bend right,"B'"']\& \phantom{.} \arrow[Rightarrow,from=Q,"{f}"{pos=0.355},shift right=1.5ex,shorten <=0.8ex,shorten >= 2.45ex]
	\end{cd}}$$
	Following \conx\ref{consesssurj} and the proof of \corx\ref{corollgoodtwoclas}, we consider the sigma natural transformation $\chi$ given by the composite
	$$\Delta 1 \aoplaxn{\Lambda} \HomC{\Cat}{\Delta 1(-)}{\B}\apseudo{\G{p,\Delta 1(-)}}\Fib+[b][n][\Delta 1(-)]\apseudo{{\Gg{\omega,\Delta 1(-)}}^{-1}} \HomC{\Cat}{\Delta 1(-)}{\Set}.$$
	But $\G{p,\Delta 1(-)}$ is strict 2-natural (thanks to our choice of pullbacks). Similarly, also $\Gg{\omega,\Delta 1(-)}$ and hence its quasi-inverse are strict 2-natural. So that $\chi$ is already cartesian-marked oplax natural. Explicitly, $\chi$ is given by
	$$\forall \fib[o]{B'}{f}{B} \text{ in } \B \quad\quad\quad {\begin{cd}*[4][2]
			\1 \arrow[rr,"(p)_B",""'{name=Q}] \arrow[d,equal] \&\& \Set\\
			\1 \arrow[rru,bend right,"(p)_{B'}"']\& \phantom{.} \arrow[Rightarrow,from=Q,"{f\stb}"{pos=0.355},shift right=1.5ex,shorten <=0.8ex,shorten >= 2.45ex]
	\end{cd}}$$
	where $(p)_B$ is the fibre of $p$ on $B$, since the pullback of $p$ along each $B\:\1\to\B$ gives precisely the fibre over $B$. This induces the known characteristic morphism $\B\to \Set$ for $p$, collecting together the fibres of $p$. So the concrete recipe for characteristic morphisms described in the proof of \thex\ref{esssurj} recovers the usual recipe for the quasi-inverse of the category of elements construction.
\end{exampl}

	We now present two general results (\prox\ref{propfactorization} and \thex\ref{teorfactorization}) that describe a strategy to restrict a good 2-classifier in $\L$ to a good 2-classifier in a nice sub-2-category $\M$ of $\L$. Such a strategy will involve factorization arguments and our theorems of reduction of the study of 2-classifiers to dense generators. The key idea is to restrict $\O\in \L$ to some $\O_\M\in \M$ so that the characteristic morphisms in $\L$ of discrete opfibrations in $\M$ factor through $\O_\M$. This is the strategy that we will follow in Section~\ref{sectiontwoclassinstacks} to restrict our good 2-classifier in prestacks to one in stacks.
	
	\begin{notation}
		Throughout the rest of this section, we fix $\opn{P}$ an arbitrary pullback-stable property $\opn{P}$ for discrete opfibrations in $\L$. We assume of course that $\opn{P}$ only depends on the isomorphism classes of discrete opfibrations.
	\end{notation}
	
	\begin{defne}\label{defnice}
		A fully faithful 2-functor $i\:\M\ffto \L$ will be called \dfn{nice} if it lifts pullbacks along discrete opfibrations, comma objects and the terminal object \pteor{that is, such limits exist in $\M$ and are calculated in $\L$} and preserves discrete opfibrations.
		
		A sub-2-category $i\:\M\cont \L$ (that is, an injective on objects and fully faithful 2-functor $i\:\M\ffto \L$) will be called \dfn{nice} if $i$ is nice. 
	\end{defne}
	
	\begin{exampl}
		Any reflective sub-2-category is nice. Indeed notice that any right 2-adjoint preserves discrete opfibrations thanks to the natural isomorphism between hom-categories given by the adjunction.
	\end{exampl}
	
	\begin{rem}\label{rempropPinsub2cat}
		Given a nice fully faithful 2-functor $i\:\M\ffto \L$, we will say that a discrete opfibration $\phi$ in $\M$ satisfies $\opn{P}$ if $i(\phi)$ does so.
	\end{rem}
	
	We will need the notions of fully faithful morphism and of chronic morphism in $\L$. 
	
	\begin{defne}\label{defchronic}
		A morphism $l\:F\to B$ in $\L$ is \predfn{fully faithful} if for every $X\in \L$ the functor $l\c -\:\HomC{\L}{X}{F}\to\HomC{\L}{X}{B}$ is fully faithful. $l$ is \predfn{chronic} if every $l\c -$ is injective on objects and fully faithful.
	\end{defne}
	
	We are ready to present our first result of restriction. Rather than restricting a good 2-classifier, we start from a morphism $\omega\:1\to \O$ in $\L$ such that its lax limit $\tau$ is a 2-classifier in $\L$. By \remx\ref{remreplacement}, this condition is weaker than being a good 2-classifier. Indeed it means that for every $F\in \L$
	$$\Gg{\omega,F}\:\HomC{\L}{F}{\O}\to \Fib[b][\L][F]$$
	is fully faithful. Of course, the result can then be applied to a starting good 2-classifier in $\L$ as well.

\begin{prop}\label{propfactorization}
	Let $i\:\M\ffto \L$ be a nice fully faithful 2-functor \pteor{\defx\ref{defnice}}. Let then $\omega\:\1\to \O$ in $\L$ such that its lax limit $\tau$ is a 2-classifier in $\L$. Finally, let $\O_\M\in \M$ such that there exists a fully faithful morphism $\ell\:i(\O_\M)\ffto\O$ in $\M$ and $\omega$ factors through $\ell$; call $\omega_\M\:1\to \O_\M$ the resulting morphism. Then the lax limit $\tau_\M$ of the arrow $\omega_\M$ is a 2-classifier in $\M$.
	
	In addition to this, if $\tau$ satisfies $\opn{P}$ then also $\tau_\M$ satisfies $\opn{P}$.
	
	Moreover, given $\phi$ a discrete opfibration in $\M$, if $i(\phi)$ is classified by $\tau$ via a characteristic morphism $z$ that factors through $\ell$ then $\phi$ is classified by $\tau_\M$.
	\begin{eqD*}{\begin{cd}*[5][4]
				\1 \arrow[d,"{i(\omega_\M)}"']\arrow[rd,"{\omega}"]\\
				i(\O_\M) \arrow[r,hookrightarrow,"{\ell}"']\&\O 
		\end{cd}}\quad\quad\quad
		{\sq*[l][5][5][\opn{comma}][2.7][2.2][0.65][d]{\O_{\M,\bullet}}{1}{\O_\M}{\O_\M}{}{\tau_\M}{\omega_\M}{}}
		\quad\quad\quad
		{\begin{cd}*[3][2.85]
				i(G)\PB{rrd}\arrow[d,"{i(\phi)}"']\arrow[rr,"{}"] \&\& \O\b \arrow[d,"{\tau}"] \\
				i(F) \arrow[rr,"{z}"] \arrow[rd,dashed,"{\exists\h i(z_\M)}"'{inner sep=0.3ex},shorten >=-0.3ex]\&\& \O \\[-2.1ex]
				\& i(\O_\M)\arrow[ru,hookrightarrow,"{\ell}"',shorten <=-0.65ex]
		\end{cd}}
	\end{eqD*}
\end{prop}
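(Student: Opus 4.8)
The plan is to reduce the whole statement to the fact that $\tau$ is a 2-classifier in $\L$, by transporting along the two fully faithful functors $i$ and $\ell\c-$. The heart of the argument is a single \emph{comparison lemma for comma objects}, which I would isolate first: for every $z\:F\to\O_\M$ in $\M$ there is an isomorphism
\[
	i\big(\Gg{\omega_\M,F}(z)\big)\,\iso\,\Gg{\omega,i(F)}\big(\ell\c i(z)\big)
\]
in $\Fib[b][\L][i(F)]$, natural in $z$. Indeed $\Gg{\omega_\M,F}(z)$ is by definition the leg to $F$ of the comma object in $\M$ of the cospan $F\ar{z}\O_\M\al{\omega_\M}1$; since $i$ is nice it lifts this comma object to $\L$, so $i(\Gg{\omega_\M,F}(z))$ is the leg to $i(F)$ of the comma object in $\L$ of $i(F)\ar{i(z)}i(\O_\M)\al{i(\omega_\M)}1$. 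Postcomposing the cospan with the fully faithful morphism $\ell$ does not change the functor $W\mapsto\{(p\:W\to i(F),\ q\:W\to1,\ \theta\:i(z)\c p\Rightarrow i(\omega_\M)\c q)\}$ represented by this comma object, because $\ell\c-$ is fully faithful on every hom-category (so $\theta\mapsto\ell\ast\theta$ is a bijection) and $\omega=\ell\c i(\omega_\M)$; hence the comma object of $i(F)\ar{\ell\c i(z)}\O\al{\omega}1$, which is $\Gg{\omega,i(F)}(\ell\c i(z))$, is canonically isomorphic over $i(F)$ to the preceding one. Naturality in $z$ is routine, everything being built from the universal property of comma objects.

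Granting this lemma, I would record two formal observations. First, $i$ induces for each $F\in\M$ a functor $i_F\:\Fib[b][\M][F]\to\Fib[b][\L][i(F)]$ — well defined because $i$ preserves discrete opfibrations and carries the strict slice over $F$ into the one over $i(F)$ — and $i_F$ is fully faithful, both sides being full subcategories of the respective strict slices and $i$ being fully faithful as a 2-functor. Second, $\ell\c i(-)\:\HomC{\M}{F}{\O_\M}\to\HomC{\L}{i(F)}{\O}$ is fully faithful, being a composite of the (fully faithful) action of $i$ on hom-categories with $\ell\c-$. Since $\tau$ is a 2-classifier in $\L$, $\Gg{\omega,i(F)}=\G{\tau,i(F)}$ is fully faithful (Remark~\ref{remreplacement}), so $\Gg{\omega,i(F)}\c(\ell\c i(-))$ is fully faithful; by the comparison lemma this composite is naturally isomorphic to $i_F\c\Gg{\omega_\M,F}$, which is therefore fully faithful too. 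Cancelling the fully faithful $i_F$ on the left yields that $\Gg{\omega_\M,F}$ is fully faithful for every $F\in\M$, which by Remark~\ref{remreplacement} applied inside $\M$ (legitimate since $\M$ has comma objects and pullbacks along discrete opfibrations) means precisely that the lax limit $\tau_\M$ of $\omega_\M$ is a 2-classifier in $\M$.

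For the clause about $\opn{P}$: taking $z=\id{\O_\M}$ in the comparison lemma gives $i(\tau_\M)\iso\G{\tau,i(\O_\M)}(\ell)$, i.e.\ $i(\tau_\M)$ is isomorphic to a pullback of $\tau$ along $\ell$; since $\opn{P}$ is pullback-stable and invariant under isomorphism, $\tau$ satisfying $\opn{P}$ forces $i(\tau_\M)$ to satisfy $\opn{P}$, hence $\tau_\M$ satisfies $\opn{P}$ by Remark~\ref{rempropPinsub2cat}. For the last clause: if $\phi$ is a discrete opfibration in $\M$ and $z\:i(F)\to\O$ is a characteristic morphism of $i(\phi)$ with $z=\ell\c i(z_\M)$, then $i(\phi)\iso\G{\tau,i(F)}(z)=\Gg{\omega,i(F)}(\ell\c i(z_\M))\iso i(\Gg{\omega_\M,F}(z_\M))$ in $\Fib[b][\L][i(F)]$; as $i_F$ is fully faithful, this isomorphism is the image of an isomorphism $\phi\iso\Gg{\omega_\M,F}(z_\M)$ in $\Fib[b][\M][F]$, so $z_\M$ is a characteristic morphism of $\phi$ for $\tau_\M$.

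I expect the comparison lemma to be the main obstacle: the delicate points are checking that all the identifications take place over $i(F)$ — so that the isomorphisms genuinely live in the categories of discrete opfibrations rather than merely in $\L$ — and verifying naturality in $z$. Once the lemma is set up cleanly, the remainder is a formal play with fully faithful functors and pullback-stability.
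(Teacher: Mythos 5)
Your proposal is correct and follows essentially the same route as the paper's proof: the paper likewise reduces everything to the observation that postcomposing the cospan with the fully faithful $\ell$ does not change the comma object and that $i$, being nice, lifts the relevant commas and pullbacks, merely splitting your single comparison lemma into a triangle (comparing $\Gg{i(\omega_\M),F'}$ with $\Gg{\omega,F'}\c(\ell\c-)$ for all $F'\in\L$) and a commutative square (comparing $\Gg{\omega_\M,F}$ with $\Gg{i(\omega_\M),i(F)}$ via the fully faithful $i\:\Fib[b][\M][F]\to\Fib[b][\L][i(F)]$). Your treatment of the $\opn{P}$ clause via $z=\id{\O_\M}$ matches the paper's Remark~\ref{remequivconstauK}, and the remaining cancellation and transport arguments are the same.
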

\begin{proof}
	We prove that, for every $F'\in \L$, there is an isomorphism
	\begin{eqD}{triangleGiomegaK}
		\begin{cd}*[2.85][1.2]
			\HomC{\L}{F'}{i(\O_\M)} \arrow[rr,"{\Gg{i(\omega_\M),F'}}"] \arrow[rd,"{\ell\c -}"'{pos=0.38}]\arrow[rr,iso,shift right=3ex]
			\&\& \Fib[b][\L][F'] \\
			\& \HomC{\L}{F'}{\O} \arrow[ru,"{\Gg{\omega,F'}}"'{pos=0.6}]
		\end{cd}
	\end{eqD}
	Given $z\:F'\to i({\O_\M})$, consider the comma objects
	\begin{eqD*}
		\sq*[l][7.5][7.4][{\lambda_\M}]{L_\M}{1}{F'}{i(\O_\M)}{}{s_\M}{i(\omega_\M)}{z}\quad\quad\quad\quad
		\begin{cd}*[3.3][3]
			L \arrow[rr,"{}"]\arrow[dd,"{s}"']\&\&[-1.75ex] 1 \arrow[lldd,Rightarrow,"{\lambda}",shorten <=2.7ex, shorten >=2.2ex] \arrow[d,"{i(\omega_\M)}"]\\
			\&\& i(\O_\M)\arrow[d,"{\ell}"] \\[-1.5ex]
			F'\arrow[r,"{z}"'] \& i(\O_\M) \arrow[r,"{\ell}"']\& \O
		\end{cd}
	\end{eqD*}
	in $\L$. It is straightforward to see that, since $\ell$ is a fully faithful morphism, $\ell\ast \lambda_\M$ exhibits the comma object on the right hand side. Then $L_\M\iso L$ over $F'$, and such isomorphism is natural in $z$ by the universal property of the comma object. $\ell\c -$ is fully faithful by definition of fully faithful morphism in $\L$. Hence ${\Gg{i(\omega_\M),F'}}$ is fully faithful. 
	
	Moreover, we obtain that if $\tau$ satisfies $\opn{P}$ then the lax limit $\xi$ of the arrow $i(\omega_\M)$ satisfies $\opn{P}$. Since $i$ lifts comma objects and the terminal objects, the lax limit $\tau_\M$ of the arrow $\omega_\M$ can be calculated in $\L$. So $i(\tau_\M)=\xi$ satisfies $\opn{P}$ as well.
	
	Let now $F\in \M$. Since $i$ preserves discrete opfibrations, $i$ induces a fully faithful functor
	$$i\:\Fib[b][\M][F]\to \Fib[b][\L][i(F)].$$
	Notice then that the following square is commutative:
	\begin{eqD}{squareGomegaK}
		\begin{cd}*[4.5][6.5]
			\HomC{\M}{F}{\O_\M} \arrow[r,"{\Gg{\omega_\M,F}}"]\arrow[d,iso,"{i\h[6]}"']\& \Fib[b][\M][F] \arrow[d,"{i}"]\\
			\HomC{\L}{i(F)}{i(\O_\M)} \arrow[r,"{\Gg{i(\omega_\M),i(F)}}"'] \& \Fib[b][\L][i(F)]
		\end{cd}
	\end{eqD}
	Indeed by assumption $i$ lifts comma objects and the terminal object. Whence $\Gg{\omega_\M,F}$ is fully faithful and $\tau_\M$ is a 2-classifier.
	
	Consider now a discrete opfibration $\phi$ in $\M$. Following the diagrams of equations~\refs{triangleGiomegaK} and~\refs{squareGomegaK}, we obtain that if $i(\phi)$ is classified by $\tau$ via a characteristic morphism $z$ that factors through $\ell$ then $\phi$ is classified by $\tau_\M$. This can also be seen from the diagram on the right in the statement, using the pullbacks lemma, after \remx\ref{remequivconstauK}.
\end{proof}

\begin{rem}\label{remequivconstauK}
	$\tau_\M$ can be equivalently produced, in $\L$, as the pullback of $\tau$ along the fully faithful $\ell\:i(\O_\M)\ffto \t{\O}$. Indeed the lax limit $\tau_\M$ of the arrow $\omega_\M$ corresponds with the lax limit of the arrow $i(\omega_\M)$ in $\L$. By whiskering with $\ell$, we then see that the latter is equivalently given by the comma object from $\omega$ to $\ell$, which is also the pullback along $\ell$ of the lax limit of the arrow $\omega$. However, by producing $\tau_\M$ as the lax limit of the arrow $\omega_\M$ in $\M$, it is guaranteed that $\tau_\M$ is a morphism in $\M$.
\end{rem}

We would like to show that we can check the factorizations of the characteristic morphisms in $\L$ of discrete opfibrations in $\M$ just on a dense generator. The following construction helps with this.

\begin{cons}\label{consdenseoffullsubcat}
	Let $i\:\M\ffto \L$ be a fully faithful 2-functor. Consider then $I\:\Y\to \M$ a fully faithful 2-functor such that $i\c I\:\Y \to \L$ is a dense generator of $\L$. By Kelly's~\cite[Theorem~5.13]{kelly_basicconceptsofenriched}, then $I$ is a fully faithful dense generator of $\M$.
	
	Moreover, let $F\in \M$. We want to exhibit $F$ as a nice colimit of the objects that form the dense generator $I$. By \conx\ref{lemma}, there exist a 2-diagram $J\:\A\to \L$ which factors through $i\c I$ and a weight $W\:\A\op\to \Cat$ such that, calling $K\deq J\c \groth{W}$,
	$$i(F)=\oplaxncolim{K}$$
	and this colimit is $(i\c I)$-absolute.
	Call $\Lambda$ the universal cartesian-marked oplax cocone that presents such colimit. Notice that, as $K$ factors through $i\c I$, it also factors through $i$. Call $K_\M\:\Groth{W}\to \M$ the resulting diagram; so that $i\c K_\M=K$. It is clear that $K_\M$ factors through $I$. Take $\Lambda_\M$ to be the unique cartesian-marked oplax cocone such that $i\c \Lambda_\M=\Lambda$. Then, since a fully faithful 2-functor reflects colimits (see also \prox\ref{propstillpreserved}),
	$$F=\oplaxncolim{K_\M},$$
	exhibited by $\Lambda_\M$. Moreover, this colimit is $I$-absolute, as $\t{I}\iso \t{(i\c I)}\c i$.
\end{cons}

Building over $\prox\ref{propfactorization}$, we now present a general result of restriction of good 2-classifiers in $\L$ to nice sub-2-categories $\M$ of $\L$. We show that the factorization of the characteristic morphisms in $\L$ of discrete opfibrations in $\M$ can be checked just on a dense generator of the kind described in $\conx\ref{consdenseoffullsubcat}$. Then our theorems of reduction of the study of a 2-classifier to dense generators (\corx\ref{corollgoodtwoclas}) guarantee that we find a good 2-classifier in $\M$. For this, we need to ensure that the operation of normalization described in \thex\ref{esssurj} (starting from every $\phi$) is possible. We show that we can just do the normalization process in $\L$, where it is certainly possible since we have a good 2-classifier; see \corx\ref{corollsharperesssurj}.

\begin{teor}\label{teorfactorization}
	Let $i\:\M\cont \L$ be a nice sub-2-category \pteor{\defx\ref{defnice}}. Let $\omega\:\1\to \O$ in $\L$ be a good 2-classifier in $\L$ with respect to $\opn{P}$. Let then $\O_\M\in \M$ such that there exists a chronic arrow \pteor{\defx\ref{defchronic}} $\ell\:i(\O_\M)\ffto\O$ in $\M$ and $\omega$ factors through $\ell$; call $\omega_\M\:1\to \O_\M$ the resulting morphism. Finally, let $I\:\Y\to \M$ be a fully faithful 2-functor such that $i\c I$ is a dense generator of $\L$. Assume that for every $\psi\:H\to I(Y)$ a discrete opfibration in $\M$ that satisfies $\opn{P}$ over $I(Y)$ with $Y\in \Y$, every characteristic morphism of $i(\psi)$ with respect to $\omega$ factors through $\ell$. Then $\omega_\M$ is a good 2-classifier in $\M$ with respect to $\opn{P}$.
\end{teor}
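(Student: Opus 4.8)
The plan is to verify the hypotheses of \corx\ref{corollgoodtwoclas} for $\M$ together with the dense generator $I$; the real content will be an auxiliary density statement to the effect that the factorization hypothesis, assumed only over the objects $I(Y)$, propagates to all objects of $\M$. First I would set the stage. Since $i\c I$ is a dense generator of $\L$, \conx\ref{consdenseoffullsubcat} gives that $I\:\Y\to\M$ is a fully faithful dense generator of $\M$, and for every $F\in\M$ it produces a $2$-diagram $K_\M\:\Groth{W}\to\M$ factoring through $I$ and a universal cartesian-marked oplax cocone $\Lambda_\M$ with $F=\oplaxncolim{K_\M}$, in such a way that, setting $K\deq i\c K_\M$ and $\Lambda\deq i\c\Lambda_\M$, the colimit $i(F)=\oplaxncolim{K}$ is $(i\c I)$-absolute. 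Writing $\tau$ for the lax limit of the arrow $\omega$, the hypothesis that $\omega$ is a good $2$-classifier in $\L$ says (\remx\ref{remreplacement}, \defx\ref{defgood2clas}) that $\tau$ is a $2$-classifier in $\L$ satisfying $\opn{P}$; hence \prox\ref{propfactorization} already yields that the lax limit $\tau_\M$ of the arrow $\omega_\M$ is a $2$-classifier in $\M$ satisfying $\opn{P}$, i.e.\ $\Gg{\omega_\M,F}$ is fully faithful and factors through $\Fib?[b][\M][F]$ for every $F\in\M$. What is left is essential surjectivity onto $\Fib?[b][\M][F]$.

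Over the generating objects this follows at once from \prox\ref{propfactorization}: if $\psi\:H\to I(Y)$ is a discrete opfibration in $\M$ satisfying $\opn{P}$ then $i(\psi)$ is a discrete opfibration in $\L$ satisfying $\opn{P}$ over $i(I(Y))$, hence classified by $\tau$ because $\omega$ is a good $2$-classifier; by hypothesis a characteristic morphism of $i(\psi)$ factors through $\ell$, so the last clause of \prox\ref{propfactorization} gives that $\psi$ is classified by $\tau_\M$. Thus $\Gg{\omega_\M,I(Y)}$ is an equivalence of categories onto $\Fib?[b][\M][I(Y)]$ for every $Y\in\Y$.

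The main step is to bootstrap this to arbitrary $F\in\M$, by proving: \emph{whenever $\phi\:G\to F$ is a discrete opfibration in $\M$ satisfying $\opn{P}$, every characteristic morphism of $i(\phi)$ with respect to $\omega$ factors through $\ell$.} Fix such a $\phi$ and a characteristic morphism $z\:i(F)\to\O$ of $i(\phi)$, and use the presentation $i(F)=\oplaxncolim{K}$ with $K=i\c K_\M$, $\Lambda=i\c\Lambda_\M$ from \conx\ref{consdenseoffullsubcat}; each $K_\M(C,X)$ equals $I(Y_{(C,X)})$ for some $Y_{(C,X)}\in\Y$. For every $(C,X)\in\Groth{W}$, by \prox\ref{corollpseudonatG} the composite $z\c\Lambda_{(C,X)}$ is a characteristic morphism of the change of base $\G{i(\phi),K(C,X)}(\Lambda_{(C,X)})$, which — $i$ being nice, hence preserving pullbacks along discrete opfibrations — is $i$ of the discrete opfibration $\G{\phi,K_\M(C,X)}(\Lambda_{\M,(C,X)})$ over $I(Y_{(C,X)})$, a discrete opfibration in $\M$ satisfying $\opn{P}$; so $z\c\Lambda_{(C,X)}$ factors through $\ell$ by hypothesis. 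Now $\ell\c i(-)$ is injective on objects and fully faithful on each hom-category, because $\ell$ is chronic (\defx\ref{defchronic}) and $\M\cont\L$ is a full sub-$2$-category; therefore the cartesian-marked oplax cocone $(z\c-)\c\Lambda$ descends uniquely — components, structure $2$-cells, cocone axioms, and the cartesian marking (this last since $\ell\c i(-)$ reflects identities and $\Lambda$ is cartesian-marked) — to a cartesian-marked oplax cocone $\Delta 1\aoplaxn{}\HomC{\M}{K_\M(-)}{\O_\M}$. By the universal property of $F=\oplaxncolim{K_\M}$ this is induced by a unique $z_\M\:F\to\O_\M$ in $\M$, and comparing the cocones that $\ell\c i(z_\M)$ and $z$ induce on $i(F)=\oplaxncolim{K}$ forces $\ell\c i(z_\M)=z$.

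With this, the conclusion follows: for every $F\in\M$ and every discrete opfibration $\phi$ in $\M$ over $F$ satisfying $\opn{P}$, the morphism $i(\phi)$ is classified by $\tau$ and, by the previous paragraph, has a characteristic morphism factoring through $\ell$, so \prox\ref{propfactorization} shows $\phi$ is classified by $\tau_\M$; hence $\Gg{\omega_\M,F}$ is essentially surjective onto $\Fib?[b][\M][F]$, and with the fully faithfulness already established it is an equivalence onto it, i.e.\ $\omega_\M$ is a good $2$-classifier in $\M$ with respect to $\opn{P}$. (Equivalently, one may feed the factorization statement into \conx\ref{consesssurj}: transferring along $\ell$ the normalizing quasi-inverses from the proof of \corx\ref{corollsharperesssurj} makes the sigma natural transformation $\chi$ built in $\M$ the image under $\ell\c i(-)$ of the one built in $\L$, which normalizes; since $\ell\c i(-)$ reflects identities, $\chi$ normalizes in $\M$, and \corx\ref{corollgoodtwoclas} concludes.) I expect the main obstacle to be the bookkeeping of the third paragraph: pushing the \emph{entire} cartesian-marked oplax cocone $(z\c-)\c\Lambda$ — all components, structure $2$-cells, and coherence axioms — through $\ell\c i(-)$ and checking that it remains a cartesian-marked oplax cocone in $\M$, which is exactly the place where chronicity of $\ell$, via its injectivity on objects (forcing the factorizations to be unique and making $\ell\c i(-)$ reflect identities), is indispensable.
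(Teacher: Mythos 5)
Your proof is correct, but it takes a genuinely different route from the paper's. After the common opening (density of $I$ in $\M$ via \conx\ref{consdenseoffullsubcat}, fully faithfulness of every $\Gg{\omega_\M,F}$ and the equivalence over the generators via \prox\ref{propfactorization}), the paper funnels the remaining essential surjectivity through \corx\ref{corollgoodtwoclas}: it builds, for each $\phi$, explicit adjoint quasi-inverses of the $\Gg{\omega_\M,K_\M(C,X)}$ by transporting along $\ell$ the \emph{normalized} characteristic morphisms of \corx\ref{corollsharperesssurj}, and then verifies that the resulting sigma natural transformation $\chi_\M$ satisfies $(\ell\c-)\c i\c\chi_\M=\chi$, so that it is already cartesian-marked. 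You instead bypass the normalization machinery entirely: your key lemma is that the factorization hypothesis, assumed only over the $I(Y)$, globalizes to every $F\in\M$ — a characteristic morphism $z$ of $i(\phi)$ restricts along each $\Lambda_{(C,X)}$ to a characteristic morphism of a discrete opfibration over a generator (via \prox\ref{corollpseudonatG} and niceness of $i$), these factorizations are unique and coherent because $\ell\c i(-)$ is injective on objects and fully faithful, so the whole cocone $(z\c-)\c\Lambda$ descends and the induced $z_\M$ satisfies $\ell\c i(z_\M)=z$ by uniqueness in the colimit's universal property; the last clause of \prox\ref{propfactorization} then closes the argument. Your route is shorter and keeps the bookkeeping to a single descent-of-cocones argument, and it exhibits the characteristic morphism in $\M$ directly as the restriction of the one in $\L$ along the chronic $\ell$. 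What the paper's route buys is recorded in the remark following the theorem: its proof only ever factorizes the specific normalized characteristic morphisms of \corx\ref{corollsharperesssurj}, so a formally weaker hypothesis suffices, whereas your argument applies the hypothesis to the a priori arbitrary composites $z\c\Lambda_{(C,X)}$ and hence genuinely uses the ``every characteristic morphism'' clause; the paper's proof also slots the result into the uniform \thex\ref{esssurj}/\corx\ref{corollgoodtwoclas} framework used elsewhere. Both proofs are valid for the theorem as stated.
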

\begin{proof}
	By \conx\ref{consdenseoffullsubcat}, $I\:\Y\to \M$ is a fully faithful dense generator of $\M$. By \prox\ref{propfactorization}, the lax limit of the arrow $\omega_\M$ satisfies $\opn{P}$ and for every $Y\in \Y$
	$$\Gg{\omega_\M,I(Y)}\:\HomC{\M}{I(Y)}{\O_\M}\to \Fib?[b][n][I(Y)]$$
	is an equivalence of categories. Indeed $\Gg{\omega_\M,I(Y)}$ is fully faithful and, given $\psi\:H\to I(Y)$ a discrete opfibration in $\M$ that satisfies $\opn{P}$, any characteristic morphism of $i(\psi)$ in $\L$ factors through $\ell\:i(\O_\M)\ffto \O$. In order to prove that $\omega_\M\:1\to \O_\M$ is a good 2-classifier in $\M$, by \corx\ref{corollgoodtwoclas}, it only remains to prove that the operation of normalization described in the proof of \thex\ref{esssurj} is possible.
	
	So let $\phi\:G\to F$ be a discrete opfibration in $\M$ that satisfies $\opn{P}$. By \conx\ref{consdenseoffullsubcat}, we express
	$$F=\oplaxncolim{K_\M},$$
	exhibited by $\Lambda_\M$. Looking at the proof of \corx\ref{corollgoodtwoclas} (and \conx\ref{consesssurj}), we consider the sigma natural transformation $\chi_\M$ given by the composite
	$$\Delta 1 \aoplaxn{\Lambda_\M} \HomC{\M}{K_\M(-)}{F}\apseudo{\h[-8]\G{\phi,K_\M(-)}\h[-8]}\Fib?[b][\M][K_\M(-)]\apseudo{\h[-8]{\Gg{\omega_\M,K_\M(-)}}^{-1}\h[-8]} \HomC{\M}{K_\M(-)}{\O_\M}.$$
	We can visualize it as follows:
	\begin{cd}[5][5]
		H_\M^{(C,X)} \PB{rd} \arrow[r,"{}"]\arrow[d,"{\G{\phi}\left(\Lambda_{\M,(C,X)}\right)}"'] \& G \arrow[d,"{\phi}"]\&[1ex] 1 \arrow[d,"{\omega_\M}"]\\
		K_\M(C,X)\arrow[rr,bend right=25,"{\hphantom{CC}\Gg{\omega_\M}^{-1}\left(\G{\phi}\left(\Lambda_{\M,(C,X)}\right)\right)}"'{pos=0.428,inner sep=0.15ex},shorten <=-0.4ex] \arrow[r,"{\Lambda_{\M,(C,X)}}"] \& F\arrow[r,dashed,"{z_\M}"] \& \O_\M
	\end{cd}
	For this, we need to choose an adjoint quasi-inverse of $\Gg{\omega_\M,K_\M(C,X)}$ for every $(C,X)\in \Groth{W}$. Let then $\psi\:H\to K_\M(C,X)$ be a discrete opfibration in $\M$ that satisfies $\opn{P}$. By assumption, the ``normalized" characteristic morphism ${\Gg{\omega,i(K_\M(C,X))}}^{-1}(i(\psi))=t$ defined as in the proof of \corx\ref{corollsharperesssurj} starting from $i(\phi)$ and $K$ and $\Lambda$, factors through $\ell\:i(\O_\M)\ffto \O$. We define ${\Gg{\omega_\M,K_\M(C,X)}}^{-1}(\psi)=t_\M$ to be the morphism in $\M$ corresponding to the resulting morphism $i(K_\M(C,X))\to i(\O_\M)$ in $\L$ given by the factorization. So that $\ell\c i(t_\M)=t$. We then extend ${\Gg{\omega_\M,K_\M(C,X)}}^{-1}$ to a right adjoint quasi-inverse of ${\Gg{\omega_\M,K_\M(C,X)}}$, choosing the components of the counit on the objects $\psi$ to be the isomorphism corresponding to the one in $\L$ from the comma of $i(\omega_\M)$ and $i(t_\M)$ to the comma of $\omega=\ell\c i(\omega_\M)$ and $t=\ell \c i(t_\M)$ composed with the counit of ${\Gg{\omega,K(C,X)}}\dashv {\Gg{\omega,K(C,X)}}^{-1}$.
	
	We prove that $\chi_\M$ is cartesian-marked oplax natural. It suffices to prove that $$(\ell\c-)\c i\c \chi_\M=\chi$$
	where $\chi$ is the cartesian-marked (i.e.\ ``normal") oplax natural transformation produced as in the proof of \corx\ref{corollsharperesssurj}, starting from $i(\phi)$ and $K$ and $\Lambda$. Indeed $\ell$ is a chronic arrow and $i$ is injective on objects and fully faithful. So we show that the following diagram of oplax natural transformations is commutative:
	\begin{cd}[4][5.5]
		\Delta 1 \arrow[d,equal,"{}"] \arrow[r,Rightarrow,"{\Lambda_\M}"] \&[-2ex] \HomC{\M}{K_\M(-)}{F}\arrow[d,Rightarrow,"{i}"]\arrow[r,Rightarrow,"{\G{\phi,K_\M(-)}}"] \& \Fib?[b][\M][K_\M(-)] \arrow[d,Rightarrow,"{i}"]\arrow[r,Rightarrow,"{{\Gg{\omega_\M,K_\M(-)}}^{-1}}"] \& \HomC{\M}{K_\M(-)}{\O_\M}\arrow[d,Rightarrow,"{(\ell\c -)\c i}"] \\
		\Delta 1 \arrow[r,Rightarrow,"{\Lambda}"'] \& \HomC{\L}{K(-)}{i(F)} \arrow[r,Rightarrow,"{\G{i(\phi),K(-)}}"'] \& \Fib?[b][\L][K(-)]\arrow[r,Rightarrow,"{{\Gg{\omega,K(-)}}^{-1}}"'] \& \HomC{\L}{K(-)}{\O}.
	\end{cd}
	The square on the left is commutative by construction of $\Lambda_\M$. The square in the middle is commutative because pullbacks along opfibrations in $\M$ are calculated in $\L$, by assumption. We also use that the structure of a discrete opfibration in $\M$ is just given by the structure of the underlying discrete opfibration in $\L$. We prove that the square on the right is commutative as well. Let $(C,X)\in \Groth{W}$. Given $\psi\:H\to K_\M(C,X)$ a discrete opfibration in $\M$ that satisfies $\opn{P}$,
	$$\ell \c i\left({\Gg{\omega_\M,\y{C}}}^{-1}(\psi)\right)={\Gg{\omega,i(K_\M(C,X))}}^{-1}(i(\psi))$$
	by construction of ${\Gg{\omega_\M,K_\M(C,X)}}^{-1}$. Given $\theta\:\psi\to \psi'$ in $\Fib?[b][\M][K_\M(C,X)]$,
	$$\ell\ast i\left({\Gg{\omega_\M,K_\M(C,X)}}^{-1}(\theta)\right) ={\Gg{\omega,i(K_\M(C,X))}}^{-1}(i(\theta))$$
	because they are equal after applying the fully faithful ${\Gg{\omega,i(K_\M(C,X))}}$, by construction of the counit of ${\Gg{\omega_\M,K_\M(C,X)}}\dashv{\Gg{\omega_\M,K_\M(C,X)}}^{-1}$ (together with the proof of \prox\ref{propfactorization}). Finally, let $(f,\nu)\:(D,X')\al{} (C,X)$ in $\Groth{W}$. The two composite oplax natural transformations of the square on the right also have the same structure 2-cells on $(f,\nu)$. Indeed it suffices to show it after applying the fully faithful ${\Gg{\omega,i(K_\M(C,X))}}$. And this is straightforward to prove, following (part of) the recipe for $\G{\tau}(\chi_{f,\nu})$ described in the proof of \thex\ref{esssurj} (together with the construction of the counit of ${\Gg{\omega_\M,K_\M(C,X)}}\dashv{\Gg{\omega_\M,K_\M(C,X)}}^{-1}$ and the proof of \prox\ref{propfactorization}). We conclude that $\chi_\M$ is cartesian-marked oplax natural.
\end{proof}

\begin{rem}
	\thex\ref{teorfactorization} offers another strategy to produce a good 2-classifier in a 2-category via a dense generator. Indeed, this is what we will do in Section~\ref{sectiontwoclassinstacks} to produce our good 2-classifier in stacks. An advantage of this strategy is that we do not have to do the normalization process described in \thex\ref{esssurj}. By Kelly's~\cite[Proposition~5.16]{kelly_basicconceptsofenriched}, any 2-category $\M$ equipped with a fully faithful dense generator $I\:\Y\to \M$ is equivalent to a full sub-2-category of $\m{\Y\op}{\Cat}$ containing the representables. So, after Section~\ref{sectiontwoclassintwopresheaves}, the strategy described in \thex\ref{teor2classinstacks} can be very helpful.
	
	Notice that the proof of \thex\ref{teorfactorization} shows that we just need to be able to factorize the ``normalized" characteristic morphisms produced as in the proof of \corx\ref{corollsharperesssurj} (starting from every $\phi$).
\end{rem}

\section{A 2-classifier in prestacks}\label{sectiontwoclassintwopresheaves}

In this section, we apply our theorems of reduction of the study of 2-classifiers to dense generators to the case of prestacks. Our theorems offer great benefits here. Indeed they allow us to just consider the classification over representables, which is essentially given by the Yoneda lemma (see \prox\ref{propbicatclasprestacks}). We show that the normalization process required by \thex\ref{esssurj} is possible in prestacks (\thex\ref{teor2classinprestacks} and \remx\ref{remnormalizationinprestacks}). Whence, by \thex\ref{teorfactorization}, it is also possible in any nice sub-2-category of prestacks (such as the 2-category of stacks, see \thex\ref{teor2classinstacks}). 

We thus produce a good 2-classifier in prestacks, in \thex\ref{teor2classinprestacks} (see also \defx\ref{defgood2clas}). Our result is in line with Hofmann and Streicher's~\cite{hofmannstreicher_liftinggrothuniverses} and with the recent Awodey's~\cite{awodey_onhofmannstreicheruniverses}, see \remx\ref{remhofmannstricherawodey}. We conclude the section extracting from the constructive proof of \thex\ref{esssurj} a concrete recipe for the characteristic morphisms in prestacks (\remx\ref{remconcreterecipeforclasmorph}).

In Section~\ref{sectiontwoclassinstacks}, we will restrict the good 2-classifier in prestacks to a good 2-classifier in stacks (\thex\ref{teor2classinstacks}, using \thex\ref{teorfactorization}).

Throughout the rest of this paper, we consider the 2-category $\L=\m{\C\op}{\Cat}$ of 2-presheaves on a small category $\C$ (that is, prestacks on $\C$). Notice that this 2-category is complete and cocomplete, since $\Cat$ is so. Recall from \prox\ref{propdiscopfibinpresheaves} the characterization of discrete opfibrations in \m{\C\op}{\Cat} and from \defx\ref{defhavingsmallfibres} the definition of discrete opfibration in \m{\C\op}{\Cat} with small fibres.

\begin{notation}
	Given $p\:\E\to \B$ a discrete opfibration in $\Cat$ with small fibres, we denote as $(p)_B$ the fibre of $p$ on $B\in \B$.
\end{notation}

\begin{cons}\label{constwoclassinprestacks}
	We search for a good 2-classifier $\omega\:1\to \O$ in $\m{\C\op}{\Cat}$. Looking at the archetypal example of $\Cat$, we expect such a good 2-classifier to classify all discrete opfibrations in $\m{\C\op}{\Cat}$ with small fibres. By \exax\ref{exarepresinprestacks}, representables form a dense generator
	$$I=\yy\:\C\to \m{\C\op}{\Cat}.$$
	Then, by our theorems of reduction to dense generators (see \corx\ref{corollgoodtwoclas}), we will be able to look just at the functors
	$$\Gg{\omega,\y{C}}\:\HomC{\m{\C\op}{\Cat}}{\y{C}}{\O}\to \Fib+[b][n][\y{C}]$$
	with $C\in \C$. Notice that the left hand side is isomorphic to $\O(C)$, by the Yoneda lemma. As we want all the $\Gg{\omega,\y{C}}$'s with $C\in \C$ to be equivalences of categories  (forming then a pseudo-natural equivalence by \prox\ref{pseudonatG}), the assignment of $\O$ is forced up to equivalence to be
	$$C\am{\O} \Fib+[b][n][\y{C}].$$
	This is a nice generalization of what happens in dimension 1. Indeed recall that the subobject classifier in 1-dimensional presheaves sends $C$ to the set of sieves on $C$. And sieves on $C$ are equivalently the subfunctors of $\y{C}$. In line with the philosophy to upgrade subobjects to discrete opfibrations (discussed in~\ref{subsection2class}), discrete opfibrations over $\y{C}$ generalize the concept of sieve to dimension $2$. Notice that $\O$ coincides with the composite
	$$\C\op\arr{\yyop}{\m{\C\op}{\Cat}}\op\ar{\Fib+[b][n][-]}\CATlarge$$
	and is thus a pseudofunctor by \prox\ref{DOpFib-}. We then take as $\omega\:1\to \O$ the pseudonatural transformation with component on $C\in \C$ that picks the identity $\id{\y{C}}$ on $\y{C}$. However, $\O$ is not a strict 2-functor and, a priori, does not land in $\Cat$; so $\O$ cannot be a 2-classifier in $\m{\C\op}{\Cat}$. Thanks to our joint work with Caviglia~\cite{cavigliamesiti_indexedgrothconstr}, we can produce a nice concrete strictification of $\O$. Although it was already known before~\cite{cavigliamesiti_indexedgrothconstr} that any pseudofunctor can be strictified, by the theory developed by Power in~\cite{power_generalcoherenceresult} and later by Lack in~\cite{lack_codescentobjcoherence}, the work of~\cite{cavigliamesiti_indexedgrothconstr} can be applied to produce an explicit and easy to handle strictification of $\O$, which in addition lands in $\Cat$. Moreover, as we will present in Section~\ref{sectiontwoclassinstacks}, such strictification can also be restricted in a natural way to a good 2-classifier in stacks.
\end{cons}
	
\begin{prop}[{\cite[\exax{5.9}, \thex{4.7}, \conx{4.8}]{cavigliamesiti_indexedgrothconstr}}]\label{propstrictification}
	The pseudofunctor $\O\:C\mto \Fib+[b][n][\y{C}]$ is pseudonaturally equivalent to the 2-functor
	\begin{fun}
		\t{\O} & \: & \C\op \hphantom{c}& \too & \hphantom{c}\Cat \\[1.3ex]
		&& C\hphantom{C} & \mto & \m{{\left(\slice{\C}{C}\right)}\op}{\Set}\\[0.3ex]
		&& (C \al{f} D) & \mto & -\c {(f\c =)}\op
	\end{fun}
	The pseudonatural equivalence $j\:\t{\O}\simeq \O$ is given by an indexed version of the Grothendieck construction. Explicitly, a discrete opfibration $\psi\:H\to \y{C}$ with small fibres corresponds with the presheaf
	\begin{fun}
		j_C^{-1}(\psi) & \: & {\left(\slice{\C}{C}\right)}\op & \too & \Set \\[1ex]
		&& (D\ar{f}C) & \mto & {(\psi_D)}_{f}\\[0.6ex]
		&&(f \al{g} f\c g)&\mto& H(g)
	\end{fun}
\end{prop}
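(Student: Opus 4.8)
The plan is to recognize this as the specialization to representables of the indexed Grothendieck construction of~\cite{cavigliamesiti_indexedgrothconstr}, and to isolate the pieces that are relevant here. First I would describe the discrete opfibrations over a representable. By \prox\ref{propdiscopfibinpresheaves}, a discrete opfibration $\psi\:H\to\y{C}$ in $\m{\C\op}{\Cat}$ is a $2$-natural transformation whose component $\psi_D\:H(D)\to\HomC{\C}{D}{C}$ on each $D\in\C$ is a discrete opfibration in $\Cat$. Since the base $\HomC{\C}{D}{C}$ is a discrete category, each total category $H(D)$ is forced to be discrete as well: a discrete opfibration lifts the identity on any object uniquely, hence over a discrete base no non-identity arrow can survive (the identity and any other arrow out of the same object both lie over the same identity, so they coincide). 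Thus $\psi$ is completely encoded by the family of fibres ${(\psi_D)}_{f}$ indexed by $f\:D\to C$, together with, for every $g\:D'\to D$, the functions ${(\psi_D)}_{f}\to{(\psi_{D'})}_{f\c g}$ obtained by restricting $H(g)$ along $\y{C}(g)\:f\mapsto f\c g$. Since $\y{C}$ is discrete-valued, its category of elements is exactly the ordinary slice $\slice{\C}{C}$, and this data is precisely a presheaf $\slice{\C}{C}\op\to\Set$ --- namely the $j_C^{-1}(\psi)$ displayed in the statement --- with smallness of the fibres matching the codomain $\Set$ by \defx\ref{defhavingsmallfibres}.

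Second, I would check that $\psi\mapsto j_C^{-1}(\psi)$ is an isomorphism, hence an equivalence, of categories $\Fib+[b][n][\y{C}]\to\m{{\left(\slice{\C}{C}\right)}\op}{\Set}$. An inverse is the Grothendieck construction over the slice: from $P\:\slice{\C}{C}\op\to\Set$ one forms $H(D)=\coprod_{f\:D\to C}P(f)$, with $\psi_D$ the evident projection and $H(g)$ assembled from the transition maps of $P$; this is a discrete opfibration with small fibres over $\y{C}$, and the two assignments are visibly mutually inverse on objects. On morphisms, a map of discrete opfibrations over $\y{C}$ is, by discreteness, nothing but a levelwise family of functions between fibres commuting with all the $H(g)$'s --- that is, a morphism of presheaves on $\slice{\C}{C}$ --- so the correspondence is fully faithful as well.

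Third, I would establish pseudonaturality in $C$, i.e.\ that the $j_C$ intertwine $\O$ with $\t{\O}$. For $f\:D\to C$, the transition functor $\O(f)$ is pullback along $\y{f}\:\y{D}\to\y{C}$ (restricted to small-fibre opfibrations), whereas $\t{\O}(f)=-\c{(f\c=)}\op$ is restriction along the functor $\slice{\C}{D}\to\slice{\C}{C}$ sending $(h\:E\to D)\mapsto(f\c h\:E\to C)$. Chasing fibres, the pullback of $\psi\:H\to\y{C}$ along $\y{f}$ has, at $E\in\C$, fibre ${(\psi_E)}_{f\c h}$ over $(h\:E\to D)$, which is exactly the value of $\t{\O}(f)\p{j_C^{-1}(\psi)}$ at $h$; compatibility with the transition maps and with composites $\slice{\C}{E}\to\slice{\C}{D}\to\slice{\C}{C}$ upgrades this to the structure isomorphisms of a pseudonatural equivalence $j\:\t{\O}\simeq\O$. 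Note that $\t{\O}$ is a genuine strict $2$-functor, because restriction along functors composes strictly and $C\mapsto\slice{\C}{C}$ is strictly functorial, while $\O$ is only a pseudofunctor since the chosen pullbacks along the $\y{f}$'s are; it is precisely this discrepancy that $j$ absorbs.

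I expect the main obstacle to be the coherence bookkeeping of this last step: the pseudonatural structure $2$-cells of $j$ come from comparing the merely pseudofunctorial chosen pullbacks along the $\y{f}$'s with the strict restriction functors, and one has to verify that these comparison isomorphisms are natural and satisfy the pseudonaturality and unit axioms. This is precisely the content that~\cite[\exax{5.9}, \thex{4.7}, \conx{4.8}]{cavigliamesiti_indexedgrothconstr} supplies at the level of the general indexed Grothendieck construction; restricting that machinery along $\yyop\:\C\op\to{\m{\C\op}{\Cat}}\op$ --- using \prox\ref{DOpFib-} --- yields the proposition, with $\t{\O}$ the resulting strictification and $j$ its comparison equivalence.
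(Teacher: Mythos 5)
Your proposal is correct and follows essentially the same route as the paper, which simply invokes the indexed Grothendieck construction of the cited reference: you unpack exactly that specialization to representables (discreteness of the total categories over a discrete base, the category of elements of $\y{C}$ being $\slice{\C}{C}$, and the comparison between chosen pullbacks along the $\y{f}$ and strict restriction along $f\c =$ as the source of the pseudonaturality $2$-cells of $j$). The only nitpick is that the correspondence on objects is an equivalence rather than an isomorphism of categories (reassembling the fibres only recovers $H(D)$ up to isomorphism), but this is all the statement claims and does not affect the argument.
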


\begin{rem}\label{remhofmannstricherawodey}
	Call $j^{-1}$ the quasi-inverse of $j$ described in~\cite[\conx{4.8}]{cavigliamesiti_indexedgrothconstr} and hinted above (that transforms $\psi$ into $j_C^{-1}(\psi)$). The composite $1\ar{\omega}\O\ar{j^{-1}}\t{\O}$ is (isomorphic to) a 2-natural transformation $\t{\omega}$. Explicitly, the component $\t{\omega}_C\:\1\to\t{\O}(C)$ of $\t{\omega}$ on $C\in \C$ picks the constant at $1$ presheaf $\Delta 1\:{\left(\slice{\C}{C}\right)}\op \to \Set$.
	
	We will prove in \thex\ref{teor2classinprestacks} that $\t{\omega}\:1\to\t{\O}$ is a good 2-classifier in $\m{\C\op}{\Cat}$ that classifies all discrete opfibrations with small fibres. This is in line with Hofmann and Streicher's~\cite{hofmannstreicher_liftinggrothuniverses}, where a similar idea is used to construct a universe in $1$-dimensional presheaves for small families, in order to interpret Martin-L\"{o}f type theory in a presheaf topos. See also the recent Awodey's~\cite{awodey_onhofmannstreicheruniverses}, that constructs Hofmann and Streicher's universe in 1-dimensional presheaves in a more conceptual way.
\end{rem}

\begin{rem}
	In~\cite[\exax{5.9}]{cavigliamesiti_indexedgrothconstr} we also suggest a strategy to prove that, for $\C$ a 2-category, what works is $\t{\O}\:C\mto \m{\pi\stb{\left(\oplaxslice{\C}{C}\right)}\op}{\Set}$, where $\pi\stb$ is the left adjoint of the inclusion $\Cat\ito \twoCAT$.
\end{rem}

\begin{prop}\label{propttauhassmallfibres}
	The lax limit $\t{\tau}\:\t{\O}\b\to \t{\O}$ of the arrow $\t{\omega}\:1\to \t{\O}$ is a discrete opfibration in $\m{\C\op}{\Cat}$ with small fibres. 
	
	As a consequence, for every $F\in \m{\C\op}{\Cat}$, the functor 
	$$\Gg{\t{\omega},F}\:\HomC{\m{\C\op}{\Cat}}{F}{\t{\O}}\to \Fib[b][n][F]$$
	lands in $\Fib+[b][n][F]$.
\end{prop}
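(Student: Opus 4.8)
The plan is to compute $\t{\tau}$ componentwise and reduce to a standard fact about coslice categories in $\Cat$. First I would recall from \remx\ref{remreplacement} that the lax limit of the arrow $\t{\omega}$ is the comma object $\t{\omega}\down\Id{\t{\O}}$. Since this is a weighted $2$-limit and $\Cat$ is $2$-complete, comma objects in $\m{\C\op}{\Cat}$ are computed componentwise; hence, for every $C\in \C$, the component $\t{\tau}_C$ is the projection of the comma object in $\Cat$ of $\t{\omega}_C\:\1\to\t{\O}(C)$ along $\Id{\t{\O}(C)}$. By \remx\ref{remhofmannstricherawodey}, $\t{\omega}_C$ picks the constant presheaf $\Delta 1\in \t{\O}(C)=\m{{\left(\slice{\C}{C}\right)}\op}{\Set}$, so this comma object is the coslice $\Delta 1\down\t{\O}(C)$ and $\t{\tau}_C\:\Delta 1\down\t{\O}(C)\to \t{\O}(C)$ is the associated projection.

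Next I would use that, for any object $c$ of a category $\D$, the coslice projection $c\down\D\to \D$ is a discrete opfibration in $\Cat$ --- namely the one associated, via the Grothendieck construction, to $\HomC{\D}{c}{-}\:\D\to\Set$ --- with fibre over an object $X$ the set $\HomC{\D}{c}{X}$. Applying this with $\D=\t{\O}(C)$ and $c=\Delta 1$ shows that $\t{\tau}_C$ is a discrete opfibration in $\Cat$ whose fibre over $X$ is $\HomC{\t{\O}(C)}{\Delta 1}{X}$. Now $\slice{\C}{C}$ has the terminal object $\id{C}$, so the constant presheaf $\Delta 1$ is the representable $\y{\id{C}}$, and the Yoneda lemma gives $\HomC{\t{\O}(C)}{\Delta 1}{X}\iso X(\id{C})$, a $\U$-small set. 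Thus every $\t{\tau}_C$ is a discrete opfibration in $\Cat$ with small fibres, and by \prox\ref{propdiscopfibinpresheaves} together with \defx\ref{defhavingsmallfibres} we conclude that $\t{\tau}$ is a discrete opfibration in $\m{\C\op}{\Cat}$ with small fibres.

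For the ``as a consequence'' clause I would invoke \remx\ref{remreplacement}: up to a choice of representatives of the comma objects $\Gg{\t{\omega},F}=\G{\t{\tau},F}$, so $\Gg{\t{\omega},F}(z)$ is a pullback of $\t{\tau}$, and since having small fibres is pullback-stable (\remx\ref{remhavingsmallfibresispbstable}) it follows that $\Gg{\t{\omega},F}(z)$ has small fibres, i.e.\ $\Gg{\t{\omega},F}$ lands in $\Fib+[b][n][F]$. In fact \remx\ref{remreplacement} already records that $\Gg{\t{\omega},F}$ lands in $\Fib+[b][n][F]$ precisely when $\t{\tau}$ satisfies the property of having small fibres, which is exactly what the first part establishes.

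The only point that needs a little care is the identification of the componentwise comma object with the coslice together with the resulting description of its fibres; once the fibre over $X$ is seen to be $X(\id{C})$, smallness is immediate from $\slice{\C}{C}$ having a terminal object. I do not expect a genuine obstacle here: the statement is essentially a componentwise unwinding of the $\Cat$ case recorded in \exax\ref{excat}.
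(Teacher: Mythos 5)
Your proof is correct and follows essentially the same route as the paper: compute the comma object componentwise, identify the fibre of $\t{\tau}_C$ over $Z$ with $\HomC{\t{\O}(C)}{\Delta 1}{Z}\iso Z(\id{C})$, and deduce the second clause from \remx\ref{remreplacement} and pullback-stability of small fibres. The only (pleasant) refinement is that you observe $\Delta 1\iso\y{\id{C}}$ since $\id{C}$ is terminal in $\slice{\C}{C}$, so the fibre computation is literally an instance of the Yoneda lemma, where the paper argues it directly by naturality and merely remarks the similarity.
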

\begin{proof}
	By \remx\ref{remreplacement} any comma object from $\t{\omega}$ can be expressed as a pullback of $\t{\tau}$, and by \remx\ref{remhavingsmallfibresispbstable} the property of having small fibres is stable under pullback. So we can just look at $\t{\tau}$. Since comma objects in $\m{\C\op}{\Cat}$ are calculated pointwise, for every $C\in \C$ the component $\t{\tau}_C$ of $\t{\tau}$ on $C$ is given by the comma object in $\Cat$ from $\t{\omega}_C=\Delta 1$ to $\id{\t{\O}(C)}$. Given $Z\in \t{\O}(C)=\m{{\left(\slice{\C}{C}\right)}\op}{\Set}$,
	$${\left(\t{\tau}_C\right)}_{Z}\iso \HomC{\t{\O}(C)}{\Delta 1}{Z}=\HomC{\m{{\left(\slice{\C}{C}\right)}\op}{\Set}}{\Delta 1}{Z}\iso Z(\id{C})$$
	and thus $\t{\tau}_C$ has small fibres. Indeed a natural transformation from $\Delta 1$ to $Z$ is the same thing as an element in $Z(\id{C})$, by the naturality condition. This is similar to the proof of the Yoneda lemma; see also \remx\ref{remsimilartoyonedalemma}. 
\end{proof}

\begin{rem}
	$\t{\O}\b$ is a pointed version of $\t{\O}$. The ``points" of $Z\in \t{\O}(C)$ are the elements of $Z(\id{C})$. 
\end{rem}

It will be useful to consider first the bicategorical classification process produced by the pseudofunctor $\O$.

\begin{rem}\label{remlimitsinpseudofunctors}
	We need to consider the 2-category (actually $\CATlarge$-enriched category) $\Psm{\C\op}{\CATlarge}$ of pseudofunctors from $\C\op$ to $\CATlarge$, pseudonatural transformations and modifications. We can of course extend the definition of discrete opfibration in a 2-category to one in any $\CATlarge$-enriched category, considering $\CATlarge$ in the place of $\Cat$.
	
	By Bird, Kelly, Power and Street's~\cite[Remark~7.4]{birdkellypowerstreet_flexiblelimits}, $\Psm{\C\op}{\CATlarge}$ has all bilimits and all flexible limits, calculated pointwise. In particular, it has all comma objects, the terminal object and all pullbacks along discrete opfibrations, calculated pointwise. Indeed, for the latter, recall that in $\CATlarge$ pullbacks along discrete opfibrations exhibit bi-iso-comma objects (the idea is similar to that of the diagram of equation~\refs{1}). So, given $p\:E\to B$ and $z\:F\to B$ in $\Psm{\C\op}{\CATlarge}$ with $p$ a discrete opfibration, we can construct (pointwise) a bi-iso-comma object $G$ of $p$ and $z$ whose universal square is filled with an identity. This is obtained by choosing the pullbacks as representatives for the bi-iso-comma objects in $\CATlarge$ on every component. It follows that $G$ is also a pullback in $\Psm{\C\op}{\CATlarge}$, using that discrete opfibrations lift identities to identities.
\end{rem}

\begin{rem}\label{remsimilartoyonedalemma}
	As hinted in the proof of \prox\ref{propttauhassmallfibres}, the Yoneda lemma is the reason why that proposition holds. Consider the pseudofunctor $\O$ and the lax limit $\tau$ of the arrow $\omega\:1\to \O$ in $\Psm{\C\op}{\CATlarge}$. For every $C\in \C$ and every discrete opfibration $\psi\:H\to \y{C}$ in $\m{\C\op}{\Cat}$ with small fibres, by the Yoneda lemma
	$${\left(\tau_C\right)}_{\psi}\iso \HomC{\O(C)}{\id{\y{C}}}{\psi}\iso{\left(\psi_C\right)}_{\id{C}}.$$
	Thus $\tau_C$ has small fibres.
\end{rem}

\begin{prop}
	For every $F\in \m{\C\op}{\Cat}$, taking comma objects from\linebreak[4] $\omega\:1\to \O$ extends to a functor
	$$\Gg{\omega,F}\:\HomC{\Psm{\C\op}{\CATlarge}}{F}{\O}\to \Fib+[b][\m{\C\op}{\Cat}][F]$$
\end{prop}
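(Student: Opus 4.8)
The plan is to carry out, in the $\CATlarge$-enriched category $\Psm{\C\op}{\CATlarge}$, the same construction that produces the functors $\Gg{\omega,F}$ in the strict setting of \defx\ref{defgood2clas} (see \remx\ref{remgood2clas}), and then to verify that its output actually lands in $\Fib+[b][\m{\C\op}{\Cat}][F]$. By \remx\ref{remlimitsinpseudofunctors}, $\Psm{\C\op}{\CATlarge}$ has all comma objects, computed componentwise; so for a pseudonatural $z\:F\to\O$ I would take $\Gg{\omega,F}(z)$ to be the comma object of $\omega$ and $z$, whose component on $C\in\C$ is the comma object in $\CATlarge$ of $\omega_C\:\1\to\O(C)$ and $z_C\:F(C)\to\O(C)$. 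On a modification $\mu\:z\Rightarrow z'$ the morphism $\Gg{\omega,F}(\mu)$ is induced by the universal property of the comma object $\Gg{\omega,F}(z')$, exactly as in \remx\ref{remgood2clas}, and functoriality is forced by the uniqueness clause of that universal property.

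To see that $\Gg{\omega,F}(z)$ is a discrete opfibration over $F$, I would argue as in \remx\ref{remreplacement}, whose reasoning is insensitive to the enrichment: $\Gg{\omega,F}(z)$ is the pullback of the lax limit $\tau\:\O\b\to\O$ of the arrow $\omega$ along $z$. Here $\tau$ is a discrete opfibration, by Weber's~\cite[\thex{2.11}]{weber_yonfromtwotop} or, componentwise, because $\tau_C$ is the second projection of the comma object of $\omega_C\:\1\to\O(C)$ with $\id{\O(C)}$, which is a discrete opfibration in $\CATlarge$ precisely since $\omega_C$ has terminal domain. As representables preserve pullbacks and discrete opfibrations in $\CATlarge$ are pullback-stable, the argument of \remx\ref{rempullbackopfib} then shows $\Gg{\omega,F}(z)$ is a discrete opfibration.

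It remains to check that $\Gg{\omega,F}(z)$ lies in $\Fib+[b][\m{\C\op}{\Cat}][F]$. Componentwise, the fibre of $\Gg{\omega,F}(z)$ over $x\in F(C)$ is the fibre of $\tau_C$ over $z_C(x)\in\O(C)=\Fib+[b][n][\y{C}]$; by the Yoneda lemma, exactly as in \remx\ref{remsimilartoyonedalemma} and \prox\ref{propttauhassmallfibres}, this fibre is $\HomC{\O(C)}{\id{\y{C}}}{z_C(x)}\iso((z_C(x))_C)_{\id{C}}$, a small set since $z_C(x)$ has small fibres. Hence each $\dG{z}(C)$ is a $\V$-small category, so $\dG{z}$ is $\Cat$-valued; being a discrete opfibration over $F$ with such small fibres, the only remaining point is that $\dG{z}$ is a \emph{strict} $2$-functor. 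This can be seen directly --- the coherence isomorphisms of the pullback pseudofunctor $\O$ are identities on the objects $\id{\y{C}}$ (here one uses the standing choice of pullbacks under which the change of base of an identity is an identity), so, combined with the strictness of $F$ and the cocycle condition for the pseudonaturality data of $z$, the componentwise comma turns out to be strictly functorial in $\C$ --- or, more transparently, by reducing to the strict $2$-functor $\t{\O}$ of \prox\ref{propstrictification}: writing $z\iso j\c(j^{-1}\c z)$ with $j^{-1}\c z\:F\to\t{\O}$ and $\omega\iso j\c\t{\omega}$ (\remx\ref{remhofmannstricherawodey}), one gets that $\Gg{\omega,F}(z)$ is equivalent over $F$ to the comma object of $\t{\omega}$ and $j^{-1}\c z$, which is computed componentwise in $\Cat$ from the strict $\t{\O}$ and hence is a genuine $\Cat$-valued discrete opfibration with small fibres as in \prox\ref{propttauhassmallfibres}; and an equivalence over $F$ between discrete opfibrations is automatically an isomorphism, since it restricts on each fibre to an equivalence of discrete categories and hence to a bijection. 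Functoriality of $\Gg{\omega,F}$ on modifications then transfers along this identification.

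I expect the strictness of $\dG{z}$ to be the one genuinely delicate step. Everything else --- existence and componentwise computation of comma objects and discrete opfibrations, their pullback-stability, the Yoneda computation of the fibres --- is a routine transfer to $\Psm{\C\op}{\CATlarge}$ of facts already proved in the $\Cat$-enriched setting; but here the componentwise comma is built from the merely pseudofunctorial $\O$ and a merely pseudonatural $z$, and one must confirm that this data reassembles into an object of $\m{\C\op}{\Cat}$ rather than only of $\Psm{\C\op}{\Cat}$. The reduction through $j$ settles this at the cost of the minor extra observation that the comparison is an isomorphism rather than just an equivalence.
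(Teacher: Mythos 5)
Your proposal is correct and follows essentially the same route as the paper: compute the comma object pointwise in $\Psm{\C\op}{\CATlarge}$, obtain the discrete opfibration property from pullback-stability of the lax limit $\tau$ of $\omega$, obtain small fibres from the Yoneda computation of \remx\ref{remsimilartoyonedalemma}, and deduce strictness of the resulting $L\to F$ from the strictness of $1$ and $F$ --- the paper extracts this last point directly from the universal property of the pointwise comma, while your first argument makes the same point by an explicit coherence computation using the standing convention that change of base of an identity is an identity. One caveat: your ``more transparent'' alternative via $j$ does not by itself give what is needed, since a pseudofunctor that is merely (pointwise) isomorphic to a strict $2$-functor need not itself be strict, and $j^{-1}\c z$ is in any case only pseudonatural, so the fact that $\Gg{\omega,F}(z)$ literally lives in $\m{\C\op}{\Cat}$ has to come from the direct argument (or the paper's appeal to the universal property), not from transport along an equivalence over $F$.
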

\begin{proof}
	Taking comma objects from the morphism $\omega\:1\to \O$ in $\Psm{\C\op}{\CATlarge}$ certainly extends to a functor
	$$\Gg{\omega,F}\:\HomC{\Psm{\C\op}{\CATlarge}}{F}{\O}\to \Fib[b][\Psm{\C\op}{\CATlarge}][F]$$
	by \remx\ref{remreplacement}. But, given $z\:F\to \O$, the comma object
	\sq[l][6.8][6.8][\h[-3]\v[2]\opn{comma}]{L}{1}{F}{\O}{}{s}{\omega}{z}
	in $\Psm{\C\op}{\CATlarge}$ is calculated pointwise. Since $1$ and $F$ are both strict 2-functors, the universal property of the comma object induces a strict 2-functor $L$ and a strict 2-natural transformation $s$. The functor $\Gg{\omega,F}$ also sends modifications between $F$ and $\O$ to strict 2-natural transformations over $F$. Moreover, every component $s_C$ of $s$ on $C\in \C$ needs to be a discrete opfibration in $\CATlarge$ with small fibres, by \remx\ref{remsimilartoyonedalemma} and the fact that the property of having small fibres is pullback-stable. Since $F(C)\in \Cat$, it follows that $s_C$ is a discrete opfibration in $\Cat$ with small fibres. And then $s$ is a discrete opfibration in $\m{\C\op}{\Cat}$ with small fibres, by \prox\ref{propdiscopfibinpresheaves} (and \defx\ref{defhavingsmallfibres}).
\end{proof}

\begin{rem}
	The following proposition shows how the bicategorical classification process in prestacks is essentially given by the Yoneda lemma.
\end{rem}

\begin{prop}\label{propbicatclasprestacks}
	For every $C\in \C$, the functor
	$$\Gg{\omega,\y{C}}\:\HomC{\Psm{\C\op}{\CATlarge}}{\y{C}}{\O}\to \Fib+[b][\m{\C\op}{\Cat}][\y{C}]=\O(C)$$
	is isomorphic to the Yoneda lemma's equivalence of categories. 
	
	Thus $\Gg{\omega,\y{C}}$ is an equivalence of categories.
\end{prop}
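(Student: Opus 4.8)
The plan is to compute the comma object $\Gg{\omega,\y{C}}(z)$ componentwise and to recognise it, via the Yoneda lemma essentially as in \remx\ref{remsimilartoyonedalemma}, as the object $z_C(\id{C})\in\O(C)=\Fib+[b][n][\y{C}]$. Since the assignment $z\mapsto z_C(\id{C})$, with its evident action on modifications, is precisely the (bicategorical) Yoneda equivalence $\HomC{\Psm{\C\op}{\CATlarge}}{\y{C}}{\O}\simeq\O(C)$, a natural isomorphism $\Gg{\omega,\y{C}}\iso\p{z\mapsto z_C(\id{C})}$ will both identify $\Gg{\omega,\y{C}}$ with the Yoneda equivalence and show that it is an equivalence of categories.

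First I would recall from \remx\ref{remlimitsinpseudofunctors} that the comma object of $\omega\:1\to\O$ and $z\:\y{C}\to\O$ in $\Psm{\C\op}{\CATlarge}$ is computed pointwise. Writing $\Gg{\omega,\y{C}}(z)=(s\:G\to\y{C})$, the component $G(D)$ is then the comma in $\CATlarge$ of $\omega_D\:1\to\O(D)$, which picks $\id{\y{D}}$, and $z_D\:\y{C}(D)\to\O(D)$; since $\y{C}(D)$ is discrete, its objects are the pairs consisting of $\sigma\:D\to C$ and a morphism $\id{\y{D}}\to z_D(\sigma)$ in $\O(D)=\Fib+[b][n][\y{D}]$, with no non-identity morphisms between distinct such pairs, and $s_D$ is the first projection. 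Hence the fibre of $s_D$ over $\sigma$ is $\HomC{\O(D)}{\id{\y{D}}}{z_D(\sigma)}$, i.e.\ the set of sections of the discrete opfibration $z_D(\sigma)$ over $\y{D}$, as $\id{\y{D}}$ is terminal in $\O(D)$. Now the comparison cells of the pseudonatural transformation $z$, together with the fact that $\O(\sigma)$ is pullback along $\y{\sigma}\:\y{D}\to\y{C}$ (\conx\ref{constwoclassinprestacks}, \prox\ref{DOpFib-}), give an isomorphism $z_D(\sigma)\iso\p{\y{\sigma}}\st\psi$ in $\Fib+[b][n][\y{D}]$, where $\psi\deq z_C(\id{C})$; and, by the same Yoneda computation used in \remx\ref{remsimilartoyonedalemma}, the universal property of this pullback identifies its sections over $\y{D}$ with the fibre $(\psi_D)_\sigma$ of $\psi_D$ over $\sigma$. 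Running these identifications over all $D\in\C$, all $\sigma\in\y{C}(D)$ and all morphisms of $\C$ produces an isomorphism $s\iso z_C(\id{C})$ in $\Fib+[b][n][\y{C}]$, natural in $z$; the equivalence of categories then follows from the (bicategorical) Yoneda lemma.

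The main difficulty is purely bookkeeping rather than conceptual: one has to check that the fibrewise bijections above are natural in $\sigma$ and compatible with restriction along the morphisms of $\C$, so that they genuinely assemble into a morphism of discrete opfibrations over $\y{C}$, and that the resulting isomorphism $\Gg{\omega,\y{C}}(z)\iso z_C(\id{C})$ is natural in $z$, i.e.\ compatible with modifications. All of this is forced by the pseudonatural structure of $z$ and by the pseudofunctoriality of $\Fib+[b][n][-]$ from \prox\ref{DOpFib-}, so no new construction is needed; one can moreover shorten part of it by using \remx\ref{remreplacement} to rewrite $\Gg{\omega,\y{C}}(z)$ as $z\st\tau$ and transporting the fibrewise Yoneda description of $\tau$ recorded in \remx\ref{remsimilartoyonedalemma} along $z$, which reduces the coherence verifications to those for the single discrete opfibration $\tau$.
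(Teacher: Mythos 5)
Your proposal is correct and follows essentially the same route as the paper: compute the comma object pointwise, identify the fibre of $s_D$ over $\sigma\:D\to C$ with $\HomC{\O(D)}{\id{\y{D}}}{z_D(\sigma)}$, then use the pseudonaturality cell of $z$ at $\sigma$ together with the Yoneda-style computation of \remx\ref{remsimilartoyonedalemma} to identify this with the fibre $(\psi_D)_\sigma$ of $\psi=z_C(\id{C})$, leaving the same (routine) naturality and 2-naturality checks that the paper also defers. The only difference is cosmetic: you merge the paper's last two fibrewise isomorphisms (Yoneda, then explicit pullback) into a single identification of sections of $(\y{\sigma})\st\psi$ with $(\psi_D)_\sigma$.
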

\begin{proof}
	Given $z\:\y{C}\to \O$, call $\ov{z}\:G\to \y{C}$ the corresponding element in $\O(C)$ via the Yoneda lemma and $s\:L\to \y{C}$ the morphism on the left of the comma object
	\sq[l][6.65][6.65][\h[-3]\v[2]\opn{comma}]{L}{1}{\y{C}}{\O}{}{s}{\omega}{z}
	in $\Psm{\C\op}{\CATlarge}$. We show that there is a 2-natural isomorphism $L\iso G$ over $\y{C}$. Given $D\in \C$, we have that $L(D)\iso G(D)$ over $\HomC{\C}{D}{C}$ because of the following bijection between the fibres, which is natural in $f\:D\to C$
	$${(s_D)}_f\iso\HomC{\O(D)}{\id{\y{D}}}{z_D(f)}\iso \HomC{\O(D)}{\id{\y{D}}}{\y{f}\st \ov{z}}\iso {((\y{f}\st \ov{z})_D)}_{\id{D}}\iso {(\ov{z}_D)}_f$$
	The first isomorphism is given by the explicit construction of comma objects in $\CATlarge$. It is natural by construction of the structure of discrete opfibration on $s$ induced by the comma object (see \remx\ref{remreplacement}). The second natural isomorphism is given by pseudonaturality of $z$. The third one is given by the Yoneda lemma and trivially natural. The fourth one is given by the explicit construction of pullbacks in $\Cat$ and is natural by construction of $\G{\ov{z},\y{D}}$ on morphisms. It is straightforward to show that the isomorphism $L(D)\iso G(D)$ is 2-natural over $\y{C}$ and to conclude the proof.
\end{proof}

We are ready to prove that, at least over representables, $\t{\omega}\:1\to \t{\O}$ satisfies the conditions of a good 2-classifier in prestacks that classifies all discrete opfibrations with small fibres.

\begin{prop}\label{propclassprestacksonrepresentables}
	For every $C\in \C$, the functor
	$$\Gg{\t{\omega},\y{C}}\:\HomC{\m{\C\op}{\Cat}}{\y{C}}{\t{\O}}\to \Fib+[b][n][\y{C}]$$
	is an equivalence of categories.
\end{prop}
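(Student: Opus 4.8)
The plan is to reduce to the bicategorical classification already established in \prox\ref{propbicatclasprestacks}, transporting it across the strictification equivalence $j\:\t{\O}\simeq\O$ of \prox\ref{propstrictification}. Recall from \prox\ref{propttauhassmallfibres} that $\Gg{\t{\omega},\y{C}}$ indeed lands in $\Fib+[b][n][\y{C}]$, so the statement makes sense, and that $\Fib+[b][n][\y{C}]=\O(C)$ by \cons\ref{constwoclassinprestacks}.

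First I would record that, by \remx\ref{remhofmannstricherawodey}, $\t{\omega}$ is isomorphic to the composite $1\ar{\omega}\O\ar{j^{-1}}\t{\O}$, so that $j\c\t{\omega}\iso\omega$ in $\Psm{\C\op}{\CATlarge}$. Regarding $1$, $\y{C}$ and $\t{\O}$ as objects of $\Psm{\C\op}{\CATlarge}$ through the inclusion $\m{\C\op}{\Cat}\ito\Psm{\C\op}{\CATlarge}$, and using that comma objects in both $2$-categories are computed pointwise (\remx\ref{remlimitsinpseudofunctors}), the comma object $\Gg{\t{\omega},\y{C}}(z)$ of $z\:\y{C}\to\t{\O}$ and $\t{\omega}$ agrees whether taken in $\m{\C\op}{\Cat}$ or in $\Psm{\C\op}{\CATlarge}$. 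Since $j$ is an equivalence, post-composition with $j$ carries comma cones to comma cones; hence $j$ sends this comma of $z$ and $\t{\omega}$ to a comma of $j\c z$ and $j\c\t{\omega}\iso\omega$, that is, $\Gg{\t{\omega},\y{C}}(z)\iso\Gg{\omega,\y{C}}(j\c z)$ over $\y{C}$, naturally in $z$. This yields an isomorphism of functors
$$\Gg{\t{\omega},\y{C}}\iso\Gg{\omega,\y{C}}\c(j\c-)\:\HomC{\m{\C\op}{\Cat}}{\y{C}}{\t{\O}}\too\Fib+[b][n][\y{C}].$$

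It then remains to see that both factors on the right are equivalences. The functor $\Gg{\omega,\y{C}}$ is an equivalence by \prox\ref{propbicatclasprestacks}. For $j\c-\:\HomC{\m{\C\op}{\Cat}}{\y{C}}{\t{\O}}\to\HomC{\Psm{\C\op}{\CATlarge}}{\y{C}}{\O}$, the $2$-categorical Yoneda lemma identifies the domain with $\t{\O}(C)$ via $z\mto z_C(\id{C})$, the bicategorical Yoneda lemma identifies the codomain with $\O(C)$ via $w\mto w_C(\id{C})$, and under these identifications $j\c-$ becomes (isomorphic to) the component $j_C\:\t{\O}(C)\to\O(C)$, which is an equivalence by \prox\ref{propstrictification}. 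Composing, $\Gg{\t{\omega},\y{C}}$ is a composite of equivalences, hence an equivalence.

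The delicate point is the compatibility step in the second paragraph: one must check carefully that post-composition with the pseudonatural equivalence $j$ preserves (equivalently, reflects) comma objects and respects the ``over $\y{C}$'' structure, and that the resulting comparison is natural in $z$, so that it assembles into a genuine natural isomorphism of functors. All of this follows from the pointwise description of comma objects in both $2$-categories together with the fact that an equivalence induces equivalences on every hom-category; but it must be spelled out. A more self-contained alternative, bypassing $\Psm{\C\op}{\CATlarge}$ altogether, is to repeat for $\t{\omega}$ the fibrewise Yoneda-lemma computation carried out for $\omega$ in the proof of \prox\ref{propbicatclasprestacks}, using the explicit description of $\t{\O}$ and $j$ in \prox\ref{propstrictification}; this works but amounts to essentially the same bookkeeping.
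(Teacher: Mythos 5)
Your proposal is correct and follows essentially the same route as the paper's own proof: it establishes the isomorphism $\Gg{\t{\omega},\y{C}}\iso\Gg{\omega,\y{C}}\c(j\c-)$ by comparing comma objects pointwise across the equivalence $j$, invokes \prox\ref{propbicatclasprestacks} for $\Gg{\omega,\y{C}}$, and identifies $j\c-$ with $j_C$ via the (strict and bicategorical) Yoneda lemmas. The ``delicate point'' you flag is exactly what the paper spells out, componentwise: $(j\ast\t{\lambda})_D$ exhibits the comma in component $D$ because $j_D$ is an equivalence, giving $\t{L}(D)\iso L(D)$ over $\HomC{\C}{D}{C}$ naturally in $z$.
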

\begin{proof}
	We prove that there is an isomorphism
	\begin{eqD}{isoGomegatildeomega}
		\begin{cd}*[2][-2]
			\HomC{\m{\C\op}{\Cat}}{\y{C}}{\t{\O}} \arrow[rr,"{\Gg{\t{\omega},\y{C}}}"] \arrow[rd,"{j\c -}"'{pos=0.38}]\arrow[rr,iso,shift right=3ex]
			\&\& \Fib+[b][\m{\C\op}{\Cat}][\y{C}] \\
			\& \HomC{\Psm{\C\op}{\CATlarge}}{\y{C}}{\O} \arrow[ru,"{\Gg{\omega,\y{C}}}"'{pos=0.6}]\v[-0.3]
		\end{cd}
	\end{eqD}
	Given $z\:\y{C}\to \t{\O}$, consider the comma objects
	\begin{eqD*}
		\sq*[l][7.4][7.4][\t{\lambda}]{\t{L}}{1}{\y{C}}{\t{\O}}{}{\t{s}}{\t{\omega}}{z}\quad\quad\quad\quad
		\begin{cd}*[3.2][3]
			L \arrow[rr,"{}"]\arrow[dd,"{s}"']\&\&[-1.5ex] 1 \arrow[lldd,Rightarrow,"{\lambda}",shorten <=2.7ex, shorten >=2.2ex] \arrow[d,"{\t{\omega}}"]\\
			\&\& \t{\O}\arrow[d,"{j}"{pos=0.47}] \\[-1.5ex]
			\y{C}\arrow[r,"{z}"'] \& \t{\O} \arrow[r,"{j\h[2]}"']\& \O
		\end{cd}
	\end{eqD*}
	respectively in $\m{\C\op}{\Cat}$ and in $\Psm{\C\op}{\CATlarge}$. Notice that the left hand side comma is also a comma object in $\Psm{\C\op}{\CATlarge}$ because commas are calculated pointwise in both 2-categories. We have that $\t{L}(D)\iso L(D)$ over $\HomC{\C}{D}{C}$ for every $D\in \C$, since $j_D$ is an equivalence of categories. Indeed $(j\ast \t{\lambda})_D$ exhibits the comma object on the right hand side in component $D$. It is straightforward to show that the isomorphism $\t{L}(D)\iso L(D)$ is 2-natural in $D\in \C$ over $\y{C}$ and to conclude the isomorphism of equation~\refs{isoGomegatildeomega}. Notice that $j\c\t{\omega}$ is isomorphic to $\omega$, whence $\Gg{\omega,\y{C}}$ is isomorphic to $\Gg{j\c\t{\omega},\y{C}}$.
	
	By \prox\ref{propbicatclasprestacks}, the functor $\Gg{\omega,\y{C}}$ is an equivalence of categories. By the Yoneda lemma, also $j\c -$ is an equivalence of categories. Indeed the following square is commutative:
	\begin{cd}[3.9][4.3]
		{\HomC{\m{\C\op}{\Cat}}{\y{C}}{\t{\O}}} \arrow[d,iso,"{}"] \arrow[r,"{j\c -}"] \& {\HomC{\Psm{\C\op}{\CATlarge}}{\y{C}}{\O}}\arrow[d,simeq,"{}"] \\
		{\t{\O}(C)} \arrow[r,"{j_C}"']\& {\O(C)}
	\end{cd}
	And thus $j\c -$ is an equivalence of categories by the two out of three property. Therefore also $\Gg{\t{\omega},\y{C}}$ is an equivalence of categories.
\end{proof}

	We now apply the theorems of reduction of 2-classifiers to dense generators to prove that $\t{\omega}\:1\to \t{\O}$ is a good 2-classifier in prestacks that classifies all discrete opfibrations with small fibres. The partial result that (the lax limit of the arrow) $\t{\omega}$ gives a 2-classifier can be obtained from Weber's~\cite[Example~4.7]{weber_yonfromtwotop}. However, Weber's paper does not address the problem of which discrete opfibrations get classified.
	
\begin{teor}\label{teor2classinprestacks}
	The 2-natural transformation $\t{\omega}$ from $1$ to
	\begin{fun}
		\t{\O} & \: & \C\op \hphantom{c}& \too & \hphantom{c}\Cat \\[1.3ex]
		&& C\hphantom{C} & \mto & \m{{\left(\slice{\C}{C}\right)}\op}{\Set}\\[0.3ex]
		&& (C \al{f} D) & \mto & -\c {(f\c =)}\op
	\end{fun}
	that picks the constant at 1 presheaf on every component is a good $2$-classifier in $\m{\C\op}{\Cat}$ that classifies all discrete opfibrations with small fibres.
\end{teor}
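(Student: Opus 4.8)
The plan is to deduce the statement from our reduction of good $2$-classifiers to dense generators, \corx\ref{corollgoodtwoclas}, applied with the dense generator $I=\yy\:\C\to\m{\C\op}{\Cat}$ of representables (see \exax\ref{exarepresinprestacks}) and with $\opn{P}$ the property of having small fibres. Under this specialization \corx\ref{corollgoodtwoclas} requires three things: (a) the lax limit of the arrow $\t{\omega}$ satisfies $\opn{P}$; (b) for every $C\in\C$ the functor $\Gg{\t{\omega},\y{C}}\:\HomC{\m{\C\op}{\Cat}}{\y{C}}{\t{\O}}\to\Fib+[b][n][\y{C}]$ is an equivalence of categories; (c) the normalization operation of \thex\ref{esssurj} is possible for every discrete opfibration $\phi$ with small fibres. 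Granting (a)--(c), \corx\ref{corollgoodtwoclas} gives that $\t{\omega}$ is a good $2$-classifier with respect to $\opn{P}$, and since here $\opn{P}$ is exactly ``having small fibres'' this is the claim: the $\Gg{\t{\omega},F}$'s being equivalences onto $\Fib+[b][n][F]$ forces every discrete opfibration with small fibres to be classified.

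Item (a) is \prox\ref{propttauhassmallfibres}. Item (b) is \prox\ref{propclassprestacksonrepresentables}; its proof factors $\Gg{\t{\omega},\y{C}}$, up to isomorphism, as the composite of $j\c-$ with the Yoneda equivalence $\Gg{\omega,\y{C}}$ of \prox\ref{propbicatclasprestacks}, where $j\:\t{\O}\simeq\O$ is the pseudonatural equivalence of \prox\ref{propstrictification}. So only (c) requires work, and it is the heart of the proof.

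For (c), fix a discrete opfibration $\phi\:G\to F$ with small fibres and the density presentation $F=\oplaxncolim{K}$ of \conx\ref{consesssurj}, with $K$ factoring through representables, say $K(C,X)=\y{C}$. Recall that $\chi$ is the composite of the universal cartesian-marked oplax cocone $\Lambda$ with the change-of-base pseudofunctor $\G{\phi,K(-)}$ and a pseudofunctorial quasi-inverse of $\Gg{\t{\omega},K(-)}$, and that normalizing it means choosing the adjoint quasi-inverses $\Gg{\t{\omega},\y{C}}^{-1}$ so that the structure $2$-cells $\chi_{f,\id{}}$ are identities for all morphisms $(f,\id{})$ of ${\left(\Groth{W}\right)}\op$; since $K$ factors through representables these morphisms are exactly the $\y{f}\:\y{D}\to\y{C}$ coming from $f\:D\to C$ in $\C$. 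I would use the \emph{explicit} quasi-inverse: by \prox\ref{propstrictification} and \remx\ref{remhofmannstricherawodey}, $j$ has the explicit quasi-inverse $j^{-1}$ of \cite{cavigliamesiti_indexedgrothconstr} sending a discrete opfibration $\psi\:H\to\y{C}$ with small fibres to the presheaf $j_C^{-1}(\psi)\:(D\ar{f}C)\mto(\psi_D)_{f}$ on $\slice{\C}{C}$, with $(f\al{g}f\c g)\mto H(g)$; composing with the Yoneda isomorphism and the factorization from (b) gives a canonical adjoint quasi-inverse of $\Gg{\t{\omega},\y{C}}$. With this choice the structure cell of $\chi$ on $\y{f}$ unwinds — using that $\Lambda$ is cartesian-marked, that the structure cell of the pseudofunctor $\G{\phi,-}$ on $\y{f}$ is the canonical comparison between pulling $\phi$ back along $\Lambda_{(C,X)}\c\y{f}$ and along $\Lambda_{(D,X')}$ and then restricting, and the pseudonaturality of $j^{-1}$ — into an identity of presheaves on $\slice{\C}{D}$, because it reduces to the tautology that taking fibres of a discrete opfibration in $\Cat$ over $\y{D}$ is strictly compatible with restriction along morphisms of $\C$, which is exactly the functoriality built into the formulas for $j_D^{-1}$ in \prox\ref{propstrictification}. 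Thus $\chi$ is already cartesian-marked oplax natural and we may take $\aleph=\chi$ in \thex\ref{esssurj}. (By \thex\ref{teorfactorization}, the same choice makes the normalization available in any nice sub-$2$-category of prestacks, in particular in stacks.)

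I expect the only genuine obstacle to be this last verification in (c): checking that the \emph{canonical} quasi-inverses coming from the indexed Grothendieck construction of \cite{cavigliamesiti_indexedgrothconstr} make the cocycle-type cells $\chi_{f,\id{}}$ equal to the identity, and not merely to a coherent isomorphism. This is a bookkeeping computation through the explicit descriptions of comma and pullback objects in $\Cat$, the Yoneda lemma, and the formulas of \prox\ref{propstrictification}, rather than a conceptual difficulty; but it is the single point where the strictness of the construction (as opposed to a purely bicategorical treatment, deferred to future work) is genuinely needed, so it must be carried out carefully.
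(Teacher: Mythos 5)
Your overall architecture coincides with the paper's: reduce to the dense generator of representables via \corx\ref{corollgoodtwoclas}, with (a) given by \prox\ref{propttauhassmallfibres} and (b) by \prox\ref{propclassprestacksonrepresentables}, so that everything rests on the normalization step (c). You have correctly identified (c) as the delicate point, but your resolution of it is precisely the choice that fails, and the actual content of the paper's proof is explaining why it fails and how to repair it.

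Concretely: if you take the canonical quasi-inverse obtained by applying $j_C^{-1}$ to the chosen pullback $\G{\phi}\left(\Lambda_{(C,X)}\right)$, then $\ceil{\chi_{(C,X)}}$ sends $(D\ar{f}C)$ to the fibre ${\left(\G{\phi}\left(\Lambda_{(C,X)}\right)_D\right)}_{f}$, whose elements (computed with the explicit pointwise pullbacks in $\Cat$) are pairs $(f,g)$ with $g\in G(D)$ and $\phi_D(g)=F(f)(X)$. For a cartesian morphism $(f,\id{})$ of $\Groth{F}$ one must then compare, over $u\:D''\to D$, the set of pairs tagged by $f\c u$ (coming from $\chi_{(C,X)}\c\y{f}$) with the set of pairs tagged by $u$ (coming from $\chi_{(D,F(f)(X))}$). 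These are canonically isomorphic but not equal, because the explicit pullback keeps track of the morphism into the base; so $\chi_{f,\id{}}$ is a non-identity isomorphism and $\chi$ is only sigma-natural, not cartesian-marked oplax natural. Your appeal to the ``tautology that taking fibres is strictly compatible with restriction'' is exactly where the argument breaks. The repair is to choose a different, isomorphic quasi-inverse on the objects $\G{\phi}\left(\Lambda_{(C,X)}\right)$, namely the one for which $\ceil{\chi_{(C,X)}}(f)={\left(\phi_D\right)}_{F(f)(X)}$ --- the global fibre of $\phi$ itself, with the tag discarded --- absorbing the discrepancy into the counit of the adjoint equivalence. With that choice $\chi_{(D,F(f)(X))}=\chi_{(C,X)}\c\y{f}$ holds on the nose because $F$ is a strict 2-functor, and $\chi_{f,\id{}}=\id{}$ then follows from the fully faithfulness of $\Gg{\t{\omega},\y{D}}$. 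Your proof needs this one substantive correction; the rest stands.
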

\begin{proof}
	Consider the fully faithful dense generator $\yy\:\C\to \m{\C\op}{\Cat}$ formed by representables. By \prox\ref{propttauhassmallfibres}, the lax limit of the arrow $\t{\omega}$ has small fibres. By \prox\ref{propclassprestacksonrepresentables}, we have that for every $C\in \C$ the functor
	$$\Gg{\t{\omega},\y{C}}\:\HomC{\m{\C\op}{\Cat}}{\y{C}}{\t{\O}}\to \Fib+[b][n][\y{C}]$$
	is an equivalence of categories. In order to prove that $\t{\omega}\:1\to \t{\O}$ is a good 2-classifier in $\m{\C\op}{\Cat}$ with respect to the property of having small fibres, by \corx\ref{corollgoodtwoclas}, it only remains to prove that the operation of normalization described in \thex\ref{esssurj} is possible.
	
	So let $\phi\:G\to F$ be a discrete opfibration in $\m{\C\op}{\Cat}$ with small fibres. Using the dense generator $\yy\:\C\to \m{\C\op}{\Cat}$, we express $F$ as a cartesian-marked oplax colimit of representables. By \exax\ref{univoplaxnormalcoconepresheaves},
	$$F\iso \wcolim{F}{\yy}\iso \oplaxncolim{\left(\yy\c\h{\groth{F}}\right)},$$
	whence $K=\yy\c\h{\groth{F}}$, with the universal cartesian-marked oplax cocone $\Lambda$ given by
	$$\forall \fib[o][2.4]{(D,X')}{(f,\nu)}{(C,X)} \text{ in } {\Grothdiag{F}} \quad\quad\quad{
		\begin{cd}*[3.8][1.8]
			\y{C} \arrow[rr,"{\ceil{X}}",""'{name=Q}] \arrow[d,"{\y{f}}"'] \&\& F\\
			\y{D} \arrow[rru,bend right,"{\ceil{X'}}"']\& \phantom{.} \arrow[Rightarrow,from=Q,"{\ceil{\nu}}"{pos=0.36},shift right=1.5ex,shorten <=1ex,shorten >= 2.7ex]
	\end{cd}}$$	
	Looking at the proof of \corx\ref{corollgoodtwoclas} (and \conx\ref{consesssurj}), we consider the sigma natural transformation $\chi$ given by the composite
	$$\Delta 1 \aoplaxn{\Lambda} \HomC{\m{\C\op}{\Cat}}{K(-)}{F}\apseudo{\h[-8]\G{\phi,K(-)}\h[-8]}\Fib+[b][n][K(-)]\apseudo{\h[-8]{\Gg{\t{\omega},K(-)}}^{-1}\h[-8]} \HomC{\m{\C\op}{\Cat}}{K(-)}{\t{\O}}.$$
	We can visualize it as follows:
	\begin{cd}[5][5]
		H^{(C,X)} \PB{rd} \arrow[r,"{}"]\arrow[d,"{\G{\phi}\left(\Lambda_{(C,X)}\right)}"'] \& G \arrow[d,"{\phi}"]\&[1ex] 1 \arrow[d,"{\t{\omega}}"]\\
		\y{C}\arrow[rr,bend right=25,"{\hphantom{CC}\Gg{\t{\omega}}^{-1}\left(\G{\phi}\left(\Lambda_{(C,X)}\right)\right)}"'{pos=0.428,inner sep=0.15ex},shorten <=-0.4ex] \arrow[r,"{\Lambda_{(C,X)}}"] \& F\arrow[r,dashed,"{z}"] \& \t{\O}
	\end{cd}
	For this, we need to choose an adjoint quasi-inverse of $\Gg{\t{\omega},K(C,X)}=\Gg{\t{\omega},\y{C}}$ for every $(C,X)\in \Groth{F}$. By \prox\ref{propclassprestacksonrepresentables}, we can construct such a quasi-inverse by taking quasi-inverses of $j\c -$ and $\Gg{\omega,\y{C}}$. Both the latter are given by the Yoneda lemma, respectively by the proof of \prox\ref{propclassprestacksonrepresentables} and by \prox\ref{propbicatclasprestacks}. So given $\psi\:H\to \y{C}$, we can take $\Gg{\t{\omega},\y{C}}^{-1}(\psi)$ to be the morphism $\y{C}\to \t{\O}$ which corresponds to $j_C^{-1}(\psi)$ (see \prox\ref{propstrictification}). With this choice, $\chi_{(C,X)}$ would be the morphism $\y{C}\to \t{\O}$ which corresponds to
	\begin{fun}
		j_C^{-1}\left(\G{\phi}\left(\Lambda_{(C,X)}\right)\right) & \: & {\left(\slice{\C}{C}\right)}\op & \too & \Set \\[1ex]
		&& (D\ar{f}C) & \mto & {\left(\G{\phi}\left(\Lambda_{(C,X)}\right)_D\right)}_{f}\\[0.6ex]
		&&(f \al{g} f\c g)&\mto& H^{(C,X)}(g)
	\end{fun}
	Using the explicit construction of pullbacks in $\Cat$ to calculate ${\left(\G{\phi}\left(\Lambda_{(C,X)}\right)_D\right)}_{f}$, we do not obtain a cartesian-marked oplax natural transformation $\chi$. This is due to the unnecessary keeping track of the morphisms $f\:D\to C$ other than the objects of $G(D)$.
	
	Instead, we choose ${\Gg{\t{\omega},\y{C}}}^{-1}$ on the objects $\G{\phi}\left(\Lambda_{(C,X)}\right)$ so that $\chi_{(C,X)}$ is the morphism $\y{C}\to \t{\O}$ which corresponds to
	\begin{fun}
		\ceil{\chi_{(C,X)}} & \: & {\left(\slice{\C}{C}\right)}\op & \too & \Set \\[1ex]
		&& (D\ar{f}C) & \mto & {\left(\phi_D\right)}_{F(f)(X)}\\[0.6ex]
		&&(f \al{g} f\c g)&\mto& G(g)
	\end{fun}
	This is the operation of normalization that we need. Notice that $${\left(\phi_D\right)}_{F(f)(X)}={\left(\phi_D\right)}_{\Lambda_{(C,X)}(f)}\iso {\left(\G{\phi}(\Lambda_{(C,X)})_D\right)}_{f}$$
	and that such isomorphism is natural in $f\in {\left(\slice{\C}{C}\right)}\op$. So that, thanks to the argument above, $\Gg{\t{\omega},\y{C}}(\chi_{(C,X)})\:Q^{(C,X)}\to \y{C}$ is indeed isomorphic to $\G{\phi}\left(\Lambda_{(C,X)}\right)\:H^{(C,X)}\to \y{C}$. We then extend ${\Gg{\t{\omega},\y{C}}}^{-1}$ to a right adjoint quasi-inverse of ${\Gg{\t{\omega},\y{C}}}$, choosing the components of the counit on the objects $\G{\phi}\left(\Lambda_{(C,X)}\right)$ to be the just obtained isomorphisms. 
	
	We prove that $\chi$ is cartesian-marked oplax natural. Given a morphism $(f,\id{})\:(D,X')\al{} (C,F(f)(X))$ in ${\left(\Groth{F}\right)}\op$, it is straightforward to show that
	$$\chi_{(D,X')}\c \y{f}=\chi_{(C,F(f)(X))}$$
	using that $F$ is a strict 2-functor. We still need to show that $\chi_{f,\id{}}=\id{}$. For this, it is straightforward to prove that 
	$$\Gg{\t{\omega},\y{C}}(\chi_{f,\id{}})=\id{},$$
	following the recipe given in the proof of \thex\ref{esssurj}. So we conclude using the fully faithfulness of $\Gg{\t{\omega},\y{C}}$.
\end{proof}

\begin{rem}\label{remnormalizationinprestacks}
	Looking at the proof of \thex\ref{teor2classinprestacks}, we see that the idea of the operation of normalization in prestacks is the following. Rather than considering the ``local fibres" of the $\G{\phi}(\Lambda_{(C,X)})$'s, that are not compatible with each other, we express all of them in terms of the ``global fibres" of $\phi$.
\end{rem}

\begin{rem}\label{remconcreterecipeforclasmorph}
	The proof of \thex\ref{teor2classinprestacks} also gives us a recipe for the characteristic morphism $z\:F\to \t{\O}$ of a discrete opfibration $\phi\:G\to F$ in $\m{\C\op}{\Cat}$ with small fibres.
	\sq[l][6][6][\h[-3]\v[2]\opn{comma}]{G}{1}{F}{\t{\O}}{}{\phi}{\t{\omega}}{z}
	We obtain that $z$ is the 2-natural transformation whose component on $C\in \C$ is the functor $z_C$ that sends $X\in F(C)$ to
	\begin{fun}
		z_C(X) & \: & {\left(\slice{\C}{C}\right)}\op & \too & \Set \\[1ex]
		&& (D\ar{f}C) & \mto & {\left(\phi_D\right)}_{F(f)(X)}\\[0.6ex]
		&&(f \al{g} f\c g)&\mto& G(g)
	\end{fun}
	Given $\nu\:X\to X'$ in $F(C)$, we have that $z_C(\nu)$ is the natural transformation whose component on $f\:D\to C$ is the function
	$${F(f)(\nu)}\stb\:{\left(\phi_D\right)}_{F(f)(X)}\to {\left(\phi_D\right)}_{F(f)(X')}$$
	that calculates the codomain of the liftings along $\phi_D$ of $F(f)(\nu)$.
	
	It is interesting to compare our result with what happens in dimension 1. The characteristic morphism for a subobject $G\ito F$ in $1$-dimensional presheaves has component on $C$ that sends $X\in F(C)$ to
	$$\Union{D\in \C}{\set{D\ar{f}C}{F(f)(X)\in G(D)}}.$$
	While in dimension 1 the fibre on $F(f)(X)$ can only be either empty or a singleton, in dimension 2 we need to handle the general sets formed by such fibres.	
\end{rem}

\section{A 2-classifier in stacks}\label{sectiontwoclassinstacks}

In this section, we restrict our good 2-classifier in prestacks (\thex\ref{teor2classinprestacks}) to a good 2-classifier in stacks that classifies all discrete opfibrations with small fibres (\thex\ref{teor2classinstacks}). We follow the strategy described in the general \thex\ref{teorfactorization} to restrict a good 2-classifier to a nice sub-2-category. The idea is to select, out of all the presheaves on slices involved in the definition of $\t{\O}$, the sheaves with respect to the Grothendieck topology induced on the slices. This restriction of $\t{\O}$ is tight enough to give a stack $\O_J$, but at the same time loose enough to still host the classification process of prestacks.

Our result solves a problem posed by Hofmann and Streicher in~\cite{hofmannstreicher_liftinggrothuniverses}. Indeed, in a different context, they considered the same natural idea to restrict their analogue of $\t{\O}$ by taking sheaves on slices. However, this did not work for them, as it does not give a sheaf. Our results show that such a restriction yields nonetheless a stack and a good 2-classifier in stacks.

As explained in \remx\ref{remourstacks}, we take strictly functorial stacks with the respect to a subcanonical topology $J$. Throughout this section, we consider the full sub-2-category $\St[J]{\C}$ of $\m{\C\op}{\Cat}$ on stacks. Call $i\:\St[J]{\C}\ffto \m{\C\op}{\Cat}$ the injective on objects and fully faithful 2-functor of inclusion.

We want to show that $i$ satisfies all the assumptions of \thex\ref{teorfactorization}.

The following proposition does not seem to appear in the literature.

\begin{prop}\label{proplimitsinstacks}
	The 2-category $\St+[J]{\C}$ of pseudofunctorial stacks has all bilimits and all flexible limits, calculated in $\Psm{\C\op}{\Cat}$ and hence pointwise.
	
	$\St[J]{\C}$ has all flexible limits \pteor{thus all comma objects and the terminal object} and all pullbacks along discrete opfibrations, calculated in $\m{\C\op}{\Cat}$ and hence pointwise.
\end{prop}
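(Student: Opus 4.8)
The plan is to establish the two statements in turn, in each case reducing the existence and pointwise computation of the relevant limits in the 2-category of stacks to the corresponding fact in $\Psm{\C\op}{\Cat}$ (respectively $\m{\C\op}{\Cat}$), and then checking that the limit object computed there is again a stack. For the first statement, recall from \thex\ref{stacksarebireflective} (Street) that $\St+[J]{\C}$ is a bireflective sub-2-category of $\Psm{\C\op}{\Cat}$; write $a\dashv i$ for the biadjunction, with $i$ the inclusion and $a$ the associated-stack pseudofunctor. By Bird, Kelly, Power and Street's~\cite{birdkellypowerstreet_flexiblelimits}, $\Psm{\C\op}{\Cat}$ has all bilimits and all flexible limits, computed pointwise. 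A bireflective inclusion creates bilimits, so $\St+[J]{\C}$ has all bilimits and they are computed in $\Psm{\C\op}{\Cat}$. For flexible limits I would argue directly: given a flexible weight $W$ and a diagram $D$ landing in stacks, form the flexible limit $L=\{W,i\c D\}$ in $\Psm{\C\op}{\Cat}$, which is computed pointwise; it remains to check $L$ is a stack. The cleanest route is to use that for a subcanonical topology the representables are stacks, that the stack conditions of \defx\ref{defstack} are expressed by requiring certain comparison functors $F(C)\to \mathrm{Desc}(S)$ to be equivalences of categories, and that this property — being a pointwise statement about hom-categories — is preserved by any limit that is computed pointwise in $\Cat$, since $\Cat$-valued limits commute with the (limit-type) constructions defining categories of descent data. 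Concretely, $\mathrm{Desc}(S)$ for a sieve $S$ is itself built as a (pseudo)limit in $\Cat$ over the poset/category of morphisms in $S$, so $\mathrm{Desc}_L(S)\simeq \{W, \mathrm{Desc}_{i\c D}(S)\}$ and the comparison $L(C)\to\mathrm{Desc}_L(S)$ is the limit of the comparisons $iD(-)(C)\to\mathrm{Desc}_{iD(-)}(S)$, hence an equivalence because each of those is. Thus $L$ is a stack.

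For the second statement, I would deduce it from the first together with the fact, already recorded in \remx\ref{remourstacks} and used throughout Section~\ref{sectiontwoclassintwopresheaves}, that $\St[J]{\C}$ (strictly functorial stacks) sits as a \emph{full} sub-2-category of $\m{\C\op}{\Cat}$, and that $\m{\C\op}{\Cat}$ is complete with limits computed pointwise (since $\Cat$ is complete). Flexible limits — in particular comma objects and the terminal object — and pullbacks along discrete opfibrations all exist in $\m{\C\op}{\Cat}$ and are computed pointwise. As before, the only thing to verify is that the limit of a diagram of strictly functorial stacks, computed pointwise in $\m{\C\op}{\Cat}$, is again a (strictly functorial) stack. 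Strict functoriality is automatic: a pointwise limit of strict 2-functors with strict 2-natural connecting maps is a strict 2-functor. That the result satisfies the three conditions of \defx\ref{defstack} follows by the same argument as above: each comparison functor into a category of descent data is the pointwise ($\Cat$-)limit of the comparison functors of the summand stacks, and a limit of equivalences (resp.\ of fully faithful, resp.\ of essentially surjective functors, handling conditions $(i)$–$(iii)$) is again such. For the pullback along a discrete opfibration $p\:E\to B$ with $B,E,F$ stacks, one uses \remx\ref{rempullbackopfib} (discrete opfibrations are pullback-stable) and \prox\ref{propdiscopfibinpresheaves} to see the pullback is computed pointwise as an ordinary $\Cat$-pullback along a discrete opfibration, and then checks the stack conditions for the pullback object $G$ directly, exploiting that $\mathrm{Desc}(-)$ sends the pullback square $G\to F$, $G\to E$, $F\to B$ to a pullback of categories of descent data (again because $\mathrm{Desc}$ is a $\Cat$-limit construction), whence the comparison $G(C)\to\mathrm{Desc}_G(S)$ is the pullback of the three comparisons for $F,E,B$, all of which are equivalences.

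I expect the main obstacle to be the verification, in both parts, that categories of descent data interact correctly with the limits in question — i.e.\ the claim ``$\mathrm{Desc}(-)$ commutes with (flexible / pointwise) limits of stacks''. This is morally clear because $\mathrm{Desc}(S)$ is defined by a limit-type formula in $\Cat$ (an end, or equivalently a pseudolimit over the sieve $S$ regarded as a diagram shape), so it commutes with limits; but making this precise requires being careful about pseudo- versus strict limits and about the fact that our descent data in \defx\ref{defdescentdatum} carry coherence isomorphisms, so $\mathrm{Desc}(S)$ is really a pseudolimit. The honest way to handle this is to invoke Street's~\cite{street_charactbicategoriesstacks} abstract formulation, under which being a stack is exactly the statement that $F$ is local with respect to the class of ``sieve inclusion'' maps, and use that a (bi)reflective localization-style condition is automatically closed under the limits that exist in the ambient and are computed pointwise. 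I would keep the written proof short: cite \thex\ref{stacksarebireflective} and~\cite{birdkellypowerstreet_flexiblelimits} for the first statement, then reduce the second to pointwise computation in $\m{\C\op}{\Cat}$ and note that the stack conditions, being stated via equivalences of hom-categories, pass to pointwise flexible limits and to pullbacks along discrete opfibrations by the standard stability of equivalences (resp.\ fully faithful and essentially surjective functors) under such limits.
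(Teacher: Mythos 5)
Your skeleton is right---reduce to the ambient 2-category where the limits are computed pointwise, then check that the limit object is again a stack---and you correctly cite bireflectivity and Bird--Kelly--Power--Street for the first half. But the route you actually develop for the stack-verification step, namely showing that each comparison functor into a category of descent data is a (pointwise) limit of equivalences and ``hence'' an equivalence, has a gap at exactly the point that matters. A strict limit of equivalences is not an equivalence in general: already the strict pullback in $\Cat$ of an equivalence along a functor can fail badly (pulling back one of the two inclusions of the terminal category into the chaotic category on two objects along the other yields the empty category). So in your pullback case, knowing that $G(C)\to\mathrm{Desc}_G(S)$ is the strict pullback of the three comparisons for $F$, $E$, $B$ does not by itself give an equivalence; you need the additional fact that a pullback along a discrete opfibration is a bi-iso-comma object (and that this persists after applying $\mathrm{Desc}$), i.e.\ that the limit in question is a bilimit. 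The same issue is latent in the flexible-limit case, where the fix is that flexible weights compute bilimits. You do flag this as ``the main obstacle'' and name the honest fix in your last sentences, but the written argument leans on the false general principle rather than on the bilimit property.

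The paper's proof uses precisely that bilimit property, but at the level of the whole stack condition rather than of descent data, which makes the argument much shorter: flexibility of the weight means the strict limit formed in $\Psm{\C\op}{\Cat}$ already has the universal property of a bilimit of a diagram landing in $\St+[J]{\C}$, hence is a stack by bireflectivity, and the fully faithful inclusion then reflects the limit back into the sub-2-category. For the strict case one observes that pointwise flexible limits in $\m{\C\op}{\Cat}$ agree with those in $\Psm{\C\op}{\Cat}$, and that the pullback along a discrete opfibration is the bi-iso-comma object (\remx\ref{remlimitsinpseudofunctors}), so the same bireflectivity argument applies; no descent data, no commutation of $\mathrm{Desc}$ with limits, and no case analysis on the three stack conditions is needed. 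If you want to keep your direct computation you must (a) justify that $\mathrm{Desc}(S)$, as a pseudolimit construction, turns your pointwise flexible limits and discrete-opfibration pullbacks into bilimits of categories, and (b) replace ``limit of equivalences'' by ``bilimit of equivalences'' throughout; otherwise, promote the bireflectivity argument from your closing remark to the actual proof.
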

\begin{proof}
	By \thex\ref{stacksarebireflective}, $\St+[J]{\C}$ is a bireflective sub-2-category of $\Psm{\C\op}{\Cat}$. So by \remx\ref{remlimitsinpseudofunctors} it has all bilimits, calculated in $\Psm{\C\op}{\Cat}$. Consider then a flexible weight $W$ and a 2-diagram $F$ in $\St+[J]{\C}$. Then the flexible limit of $F$ weighted by $W$ exists in $\Psm{\C\op}{\Cat}$, by \remx\ref{remlimitsinpseudofunctors}. In particular, by flexibility of $W$, it satisfies the universal property of a bilimit of a 2-diagram that factors through $\St+[J]{\C}$. And it is then a pseudofunctorial stack. It follows that it is the flexible limit of $F$ weighted by $W$ in $\St+[J]{\C}$, since fully faithful 2-functors reflect 2-limits.
	
	Consider now a 2-diagram $F$ in $\St[J]{\C}$ and a flexible weight $W$. The flexible limit of $F$ weighted by $W$ exists in $\m{\C\op}{\Cat}$, calculated pointwise. Then it is also the flexible limit in $\Psm{\C\op}{\Cat}$, as the latter is as well calculated pointwise. So it is a stack, as the 2-diagram in $\Psm{\C\op}{\Cat}$ factors through $\St+[J]{\C}$. Whence we have produced the flexible limit of $F$ weighted by $W$ in $\St[J]{\C}$.
	
	Finally, consider $p\: E\to B$ and $z\: F\to B$ in $\St[J]{\C}$, with $p$ a discrete opfibration. The pullback of $p$ and $z$ exists in $\m{\C\op}{\Cat}$, calculated pointwise. Then it is also the bi-iso-comma object of $p$ and $z$ in $\Psm{\C\op}{\Cat}$, by \remx\ref{remlimitsinpseudofunctors}. We conclude that it is a stack and hence the pullback of $p$ and $z$ in $\St[J]{\C}$, by the argument above.
\end{proof}

\begin{prop}\label{propdiscopfibinstacks}
	A morphism in $\St[J]{\C}$ is a discrete opfibration if and only if its underlying morphism in $\m{\C\op}{\Cat}$ is so. In particular, $i\:\St[J]{\C}\ffto \m{\C\op}{\Cat}$ preserves discrete opfibrations.
\end{prop}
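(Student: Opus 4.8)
The plan is to prove the two implications separately; the substance of the statement --- and of the closing sentence, which is the nontrivial direction --- is that the fully faithful inclusion $i\:\St[J]{\C}\to\m{\C\op}{\Cat}$ both reflects and preserves discrete opfibrations. Throughout I would use repeatedly that $i$ is fully faithful, so that for a morphism of stacks $s\:G\to F$ and any stack $X$, the functor $s\c-$ from $\HomC{\St[J]{\C}}{X}{G}$ to $\HomC{\St[J]{\C}}{X}{F}$ is literally the functor $i(s)\c-$ from $\HomC{\m{\C\op}{\Cat}}{X}{G}$ to $\HomC{\m{\C\op}{\Cat}}{X}{F}$ on the same hom-categories.

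First I would dispatch the easy implication: if $s\:G\to F$ is a morphism of stacks whose underlying morphism $i(s)$ is a discrete opfibration in $\m{\C\op}{\Cat}$, then $s$ is a discrete opfibration in $\St[J]{\C}$. Indeed, for every test object $X\in\St[J]{\C}$ the functor $s\c-$ from $\HomC{\St[J]{\C}}{X}{G}$ to $\HomC{\St[J]{\C}}{X}{F}$ coincides, as noted above, with the functor induced by $i(s)$ between $\HomC{\m{\C\op}{\Cat}}{X}{G}$ and $\HomC{\m{\C\op}{\Cat}}{X}{F}$, and the latter is a discrete opfibration in $\Cat$ since $i(s)$ is a discrete opfibration in $\m{\C\op}{\Cat}$ (taking $X$ itself as a test object there). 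This uses nothing beyond full faithfulness of $i$.

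The converse implication --- which also yields the asserted preservation by $i$ --- is the only step I expect to need care, though the argument is short. Suppose $s\:G\to F$ is a discrete opfibration in $\St[J]{\C}$. By \prox\ref{propdiscopfibinpresheaves}, to see that $i(s)$ is a discrete opfibration in $\m{\C\op}{\Cat}$ it suffices to check that for every $C\in\C$ the component $s_C\:G(C)\to F(C)$ is a discrete opfibration in $\Cat$. By the $2$-categorical Yoneda lemma, $s_C$ is isomorphic to the functor induced by postcomposition with $i(s)$ from $\HomC{\m{\C\op}{\Cat}}{\y{C}}{G}$ to $\HomC{\m{\C\op}{\Cat}}{\y{C}}{F}$. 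Here is the key point: since $J$ is subcanonical (\remx\ref{remourstacks}), each representable $\y{C}$ is a sheaf, hence a stack, so $\y{C}$ is an object of the full sub-$2$-category $\St[J]{\C}$; applying full faithfulness of $i$ once more, the displayed functor is the functor $s\c-$ from $\HomC{\St[J]{\C}}{\y{C}}{G}$ to $\HomC{\St[J]{\C}}{\y{C}}{F}$, which is a discrete opfibration in $\Cat$ precisely because $s$ is a discrete opfibration in $\St[J]{\C}$. Hence every $s_C$ is a discrete opfibration in $\Cat$, and \prox\ref{propdiscopfibinpresheaves} concludes; specializing to an arbitrary discrete opfibration $s$ of $\St[J]{\C}$ gives that $i$ preserves discrete opfibrations. (One could instead avoid \prox\ref{propdiscopfibinpresheaves} by writing an arbitrary presheaf as a weighted colimit of representables and using that discrete opfibrations in $\Cat$ are stable under weighted limits, but the componentwise criterion is cleaner.) The essential ingredient is thus the subcanonicity of $J$, which is exactly what makes all representables available as test objects inside $\St[J]{\C}$.
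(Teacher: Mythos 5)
Your proof is correct and follows essentially the same route as the paper: the ``if'' direction by full faithfulness of the inclusion, and the ``only if'' direction via the componentwise criterion of \prox\ref{propdiscopfibinpresheaves} together with subcanonicity of $J$, which puts the representables inside $\St[J]{\C}$ as test objects. The paper states this in two sentences; you have simply supplied the details (Yoneda plus invariance of discrete opfibrations in $\Cat$ under isomorphism) that the paper leaves implicit.
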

\begin{proof}
	The ``if" part is clear by the fact that $\St[J]{\C}\ffto \m{\C\op}{\Cat}$ is fully faithful. The ``only if" part follows from \prox\ref{propdiscopfibinpresheaves}, since $J$ is subcanonical.
\end{proof}

\begin{rem}\label{remstacksformnicesub}
	Putting together \prox\ref{proplimitsinstacks} and \prox\ref{propdiscopfibinstacks}, we have thus proved that $i\:\St[J]{\C}\cont \m{\C\op}{\Cat}$ is a nice sub-2-category (\defx\ref{defnice}).
\end{rem}

\begin{defne}
	We say that a discrete opfibration $\phi$ in $\St[J]{\C}$ \dfn{has small fibres} if $i(\phi)$ has small fibres. Notice that this is in line with \remx\ref{rempropPinsub2cat}.
\end{defne}

\begin{prop}\label{propffinprestacks}
	Let $l\:F\to B$ be a morphism in $\m{\C\op}{\Cat}$ \pteor{that is, a 2-natural transformation $l$}. $l$ is a fully faithful morphism if and only if for every $C\in \C$ the component $l_C$ of $l$ on $C$ is a fully faithful functor.
	
	$l$ is chronic \pteor{\defx\ref{defchronic}} if and only if for every $C\in \C$ the component $l_C$ of $l$ on $C$ is an injective on objects and fully faithful functor.
\end{prop}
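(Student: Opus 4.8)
The plan is to reduce both statements to the pointwise case by exploiting that everything in sight — hom-categories, representables, 2-natural transformations — is computed componentwise in $\m{\C\op}{\Cat}$. For the "if" direction of the first claim, I would take an arbitrary $X\in \m{\C\op}{\Cat}$ and analyse the functor $l\c -\:\HomC{\m{\C\op}{\Cat}}{X}{F}\to\HomC{\m{\C\op}{\Cat}}{X}{B}$. A 2-natural transformation $\alpha\:X\to F$ is a compatible family of functors $\alpha_C\:X(C)\to F(C)$, and post-composition with $l$ acts componentwise as $l_C\c-$. So to show $l\c-$ is full (resp.\ faithful) on a modification $\beta\:l\c\alpha\aR{}l\c\alpha'$, I would use that each $l_C$ is full (resp.\ faithful) to produce (resp.\ to identify) the components $\gamma_C\:\alpha_C\aR{}\alpha'_C$; the key point is that the family $\{\gamma_C\}$ is automatically a modification because the defining naturality squares for $\{\gamma_C\}$, after post-composition with the faithful $l_C$ (and $l_D$), become the naturality squares for $\{\beta_C\}$, which hold. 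Thus faithfulness of the $l_C$'s upgrades the pointwise fill to a genuine modification. This establishes that $l\c-$ is fully faithful for every $X$, i.e.\ $l$ is a fully faithful morphism.

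For the "only if" direction, I would specialise to the representables: test $l\c-$ against $X=\y{C}$ for each $C\in\C$. By the (2-categorical) Yoneda lemma, $\HomC{\m{\C\op}{\Cat}}{\y{C}}{F}\simeq F(C)$ naturally, and under this equivalence the functor $l\c-\:\HomC{\m{\C\op}{\Cat}}{\y{C}}{F}\to\HomC{\m{\C\op}{\Cat}}{\y{C}}{B}$ is identified with $l_C\:F(C)\to B(C)$. Since $l$ being a fully faithful morphism forces $l\c-$ to be fully faithful for \emph{every} $X$, in particular for $X=\y{C}$, we conclude that $l_C$ is fully faithful for every $C$. The second claim (chronic) is handled by exactly the same two arguments, simply replacing "fully faithful functor" by "injective on objects and fully faithful functor" throughout: in the "if" direction, injectivity on objects of the $l_C$'s gives injectivity on objects of $l\c-$ (two 2-natural transformations with $l\c\alpha=l\c\alpha'$ have $l_C\c\alpha_C=l_C\c\alpha'_C$, hence $\alpha_C=\alpha'_C$ for all $C$, hence $\alpha=\alpha'$); in the "only if" direction the Yoneda identification again transports the property between $l\c-$ at $\y{C}$ and $l_C$.

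I do not expect a serious obstacle here; this is a routine componentwise unwinding. The one point that deserves a sentence of care is verifying, in the "if" direction, that the pointwise-produced family $\{\gamma_C\}$ really is a modification — i.e.\ that the modification axiom for $\{\gamma_C\}$ follows from that for $\{\beta_C\}$ using faithfulness of $l$ — since this is where the faithfulness hypothesis (as opposed to mere fullness) is genuinely used. Everything else is bookkeeping with the componentwise description of $\m{\C\op}{\Cat}$ and the Yoneda lemma.
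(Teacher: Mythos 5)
Your proposal is correct, and it fills in exactly the routine componentwise argument that the paper leaves implicit (its proof of this proposition is literally ``The proof is straightforward''): test against arbitrary $X$ pointwise for the ``if'' direction, using faithfulness of the $l_C$'s to see that the pointwise-lifted 2-cells assemble into a modification, and test against representables via the 2-categorical Yoneda lemma for the ``only if'' direction. No gaps.
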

\begin{proof}
	The proof is straightforward.
\end{proof}

\begin{cons}
	We want to produce the object $\O_\M$ of \thex\ref{teorfactorization} in our case with $i\:\St[J]{\C}\cont \m{\C\op}{\Cat}$. That is, a stack, which we will call $\O_J$, that is a nice restriction of the good 2-classifier $\t{\O}$ in prestacks.
	Recall that, in dimension 1, the subobject classifier in sheaves is given by taking closed sieves. We produce a 2-categorical notion of closed sieve. 
	
	We have already said in \conx\ref{constwoclassinprestacks} that discrete opfibrations over representables generalize the concept of sieve to dimension 2; we call them 2-sieves. Thanks to \prox\ref{propstrictification} (indexed Grothendieck construction, explored in our joint work with Caviglia~\cite{cavigliamesiti_indexedgrothconstr}), we can equivalently consider presheaves on slice categories. We now need to generalize closedness of a sieve to dimension 2. The indexed Grothendieck construction can be restricted to a bijection between 1-dimensional sieves on $C\in \C$ and presheaves ${\left(\slice{\C}{C}\right)}\op\to \2$. It can be shown that closed 1-sieves correspond with sheaves ${\left(\slice{\C}{C}\right)}\op\to \2$, and that the closure of a sieve corresponds to the sheafification of the corresponding presheaves. So we define \dfn{closed 2-sieves} to be the sheaves ${\left(\slice{\C}{C}\right)}\op\to \Set$ (with respect to the Grothendieck topology induced by $J$ on the slices, that we call again $J$). And we use them to restrict our good 2-classifier $\omega\:1\to\t{\O}$ in prestacks to a good 2-classifier $\omega_J\:1\to\O_J$ in stacks (\thex\ref{teor2classinstacks}).
\end{cons}

\begin{rem}\label{remclosed2sieves}
	The ``maximal" 2-sieve $\id{\y{C}}$, associated with $\t{\omega}_C=\Delta1\:{\left(\slice{\C}{C}\right)}\op \to \Set$ (see \remx\ref{remhofmannstricherawodey}), is a closed 2-sieve.
	
	Closed 2-sieves are stable under pullbacks. Indeed if $F\:{\slice{\C}{C}}\op\to \Set$ is a sheaf and $f\:D\to C$ is a morphism in $\C$ then also $F\c (f\c =)\:{\slice{\C}{D}}\op\to \Set$ is a sheaf.
\end{rem}

\begin{prop}\label{propOJisastack}
	The 2-functor
	\begin{fun}
		{\O_J} & \: & \C\op \hphantom{c}& \too & \hphantom{c}\Cat \\[1.3ex]
		&& C\hphantom{C} & \mto & \Sh[J]{\slice{\C}{C}}\\[0.3ex]
		&& (C\al{f} D) & \mto & -\c {(f\c =)}\op\v[0.5]
	\end{fun}
	is a stack with respect to the Grothendieck topology $J$.\v
	
	Moreover, the inclusions $\Sh[J]{\slice{\C}{C}}\ffto \m{{\left(\slice{\C}{C}\right)}\op}{\Set}$ form a chronic arrow \linebreak[4]${\ell\:i(\O_J)\ffto\t{\O}}$ in $\m{\C\op}{\Cat}$. And $\t{\omega}\:1\to \t{\O}$ factors through $\ell$; call $\omega_J\:1\to \O_J$ the resulting morphism.
\end{prop}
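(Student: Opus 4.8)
The plan is to verify first that $\O_J$ is a well-defined strict $2$-functor sitting inside $\t{\O}$, and then to check the three conditions of \defx\ref{defstack}.

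For the first part I would argue as follows. By \remx\ref{remclosed2sieves}, if $F\:{\left(\slice{\C}{C}\right)}\op\to \Set$ is a sheaf (a closed $2$-sieve) and $f\:D\to C$ is a morphism in $\C$, then $F\c (f\c =)\op\:{\left(\slice{\C}{D}\right)}\op\to \Set$ is again a sheaf. Hence the transition $2$-functors $\t{\O}(f)=-\c (f\c =)\op$ of the good $2$-classifier in prestacks restrict to functors $\O_J(C)=\Sh[J]{\slice{\C}{C}}\to \Sh[J]{\slice{\C}{D}}=\O_J(D)$, and the strict functoriality of $\O_J$ is inherited from that of $\t{\O}$. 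Consequently the full subcategory inclusions $\Sh[J]{\slice{\C}{C}}\ffto \m{{\left(\slice{\C}{C}\right)}\op}{\Set}$ are $2$-natural in $C$, so they assemble into a morphism $\ell\:i(\O_J)\to \t{\O}$ in $\m{\C\op}{\Cat}$; each component is injective on objects and fully faithful, whence $\ell$ is a chronic arrow by \prox\ref{propffinprestacks}. Finally, $\t{\omega}_C=\Delta 1$ is the terminal presheaf on ${\left(\slice{\C}{C}\right)}\op$, hence a sheaf (equivalently, the maximal $2$-sieve is closed, \remx\ref{remclosed2sieves}), so $\t{\omega}$ factors through $\ell$, giving the desired $\omega_J\:1\to \O_J$.

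For the second part, the strategy is to reduce the stack conditions to descent in the topos $\Sh[J]{\C}$. Here I would use the classical identification, for the induced topology on the slices, of $\Sh[J]{\slice{\C}{C}}$ with the slice topos $\Sh[J]{\C}/\y{C}$ --- legitimate since $J$ is subcanonical, so $\y{C}$ is a sheaf --- under which, via the externalization $G\mapsto(\Int G\to \y{C})$, the transition functor $-\c (f\c=)\op$ becomes pullback along $\y{f}\:\y{D}\to \y{C}$. Under this identification, a descent datum for $\O_J$ with respect to a covering sieve $S$ on $C$ (\defx\ref{defdescentdatum}) is precisely a descent datum, for the codomain fibration of $\Sh[J]{\C}$, with respect to the family $\fami{\y{f}\:\y{D}\to \y{C}}{f\in S}$, which is epimorphic exactly because $S$ is $J$-covering and $J$ subcanonical (so that the sheafification of $S\cont \y{C}$ is $\y{C}$). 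Since $\Sh[J]{\C}$ is a Grothendieck topos, its codomain fibration is a stack for the canonical topology --- every covering family in a topos is of effective descent --- and this delivers all three conditions: effectivity of the descent datum gives \defx\ref{defeffectivedescentdatum}, and existence plus uniqueness of gluings of arrows give conditions $(ii)$ and $(iii)$ of \defx\ref{defstack}. Translating back along the identification yields that $\O_J$ is a stack. (Passing between the sieve formulation of \defx\ref{defstack} and the covering-family formulation is harmless: compatible families indexed by $S$ are the same as descent data for $\fami{\y{f}}{f\in S}$.)

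I expect the main obstacle to be condition $(i)$ of \defx\ref{defstack}, effectivity of descent data: one must glue sheaves living on the various slices $\slice{\C}{D}$, for $f\:D\to C$ in $S$, into a single sheaf on all of $\slice{\C}{C}$, evaluating the glued presheaf coherently also on the morphisms of $\slice{\C}{C}$ that do not lie in $S$. The reduction to the codomain fibration of $\Sh[J]{\C}$ is what makes this clean; the hands-on alternative --- defining the value of the glued sheaf on $g\:E\to C$ by the sheaf condition applied to the pulled-back covering sieve $g^{*}S$ on $E$, then checking sheafhood and producing the cocycle-compatible isomorphisms $\psi^{f}$ --- is routine but lengthy. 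Conditions $(ii)$ and $(iii)$ are comparatively easy: they amount to the comparison functor $\Sh[J]{\slice{\C}{C}}\to \prod_{f\in S}\Sh[J]{\slice{\C}{D}}$ being faithful and reflecting isomorphisms on arrows, which is immediate from $\fami{\y{f}}{f\in S}$ being epimorphic in $\Sh[J]{\C}$.
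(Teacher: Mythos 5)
Your treatment of the second half of the statement (the chronic arrow $\ell$ and the factorization of $\t{\omega}$) is exactly the paper's, which likewise reduces it to \prox\ref{propffinprestacks} and \remx\ref{remclosed2sieves}. For the stack conditions you take a genuinely different, more conceptual route. The paper verifies \defx\ref{defstack} by hand: uniqueness and existence of gluings of morphisms come from the sheaf condition via matching families for the pulled-back sieves $g\st S$, and effectivity of a descent datum $(M_f)_{f\in S}$ is obtained by writing down an explicit presheaf $Z$ on $\slice{\C}{C}$ (with $Z(f)=M_f(\id{D})$ for $f\in S$ and $Z(g)=\emp$ otherwise), setting $M\deq Z^{++}$, and checking $f\st(Z^{++})\iso M_f$ via the compatibility of the plus construction with restriction along $f$. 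You instead transport the problem along the standard equivalences $\Sh[J]{\slice{\C}{C}}\simeq \Sh[J]{\C}/\y{C}$ (legitimate since $J$ is subcanonical) and invoke the classical fact that the self-indexing of a Grothendieck topos is a stack for the canonical topology. This works: the conditions of \defx\ref{defstack} are invariant under pseudonatural equivalence, so it suffices to check them for $C\mto \Sh[J]{\C}/\y{C}$, where they become effective descent along the jointly epimorphic family $\fami{\y{f}}{f\in S}$. Your route is shorter and explains conceptually why the restriction to sheaves on slices succeeds, but it imports two nontrivial classical inputs --- the comparison of slice sites, together with the routine but necessary check that it intertwines the transition functors $-\c(f\c =)\op$ with pullback along $\y{f}$, and effective descent in topoi --- whereas the paper's computation is self-contained and yields an explicit formula for the glued object, in keeping with its emphasis on concrete recipes. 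One small imprecision: condition $(ii)$ is not ``immediate from the family being epimorphic'' alone; fullness of the comparison functor uses that epimorphisms in a topos are effective (pullback along them is of effective descent), not mere joint epimorphicity. Since this is contained in the descent theorem you already cite, nothing is actually missing.
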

\begin{proof}
	The second part of the statement is clear after \prox\ref{propffinprestacks} and \remx\ref{remclosed2sieves}. 
	
	We prove that $\O_J$ is a stack (recall from \defx\ref{defstack} the definition). So let $C\in \C$ and $S\in J(C)$ a covering sieve on $C$. 
	
	We first prove the uniqueness of gluings of morphisms. Let $M,N\in \Sh[J]{\slice{\C}{C}}$ and let $\alpha,\beta\:M\aR{}N$ two natural transformations such that $f\st \alpha=f\st \beta$ for every $(D\ar{f}C)\in S$. We show that $\alpha=\beta$. Given $(D\ar{f}C)\in S$,
	$$\alpha_f=(f\st \alpha)_{\id{D}}=(f\st \beta)_{\id{D}}=\beta_f.$$
	Let now $g\:E\to \C$ in $\C$ and consider $g\st S\in J(g)=J(E)$. Since $M$ is a sheaf,
	$$M(g)\iso \Match[M]{g\st S}$$
	where the right hand side denotes the set of matching families for $M$ with respect to the covering sieve $g\st S$. And this holds analogously for $N$. We produce a commutative square
	\begin{cd}[4][4.5]
		M(g) \arrow[d,iso,"{}"]\arrow[r,"{\alpha_g}"]\& N(g)\arrow[d,iso,"{}"] \\
		\Match[M]{g\st S}\arrow[r,"{[\alpha_g]}"'] \& \Match[N]{g\st S}
	\end{cd}
	and analogously for $\beta$. We define $[\alpha_g]$ to send a matching family $(h\in g\st S)\am{m} (x_h\in M(g\c h))$ to the matching family $(h\in g\st S)\mto (\alpha_{g\c h}(x_h)\in N(g\c h))$. The latter is indeed a matching family by naturality of $\alpha$. The square above commutes since, for every $m\in \Match[M]{g\st S}$, calling $X$ the amalgamation of $m$, we have that $\alpha_g(X)$ is an amalgamation of $[\alpha_g](m)$. Notice now that $[\alpha_g]=[\beta_g]$, as for every $h\in g\st S$ we have that $g\c h\in S$ and hence $\alpha_{g\c h}=\beta_{g\c h}$. Thus $\alpha_g=\beta_g$.
	
	We prove that we have the gluings of morphisms. Let $M,N\in \Sh[J]{\slice{\C}{C}}$ and consider a matching family
	$$(D\ar{f} C)\in S \h[3]\mto \h[3] (\alpha_f\:f\st M\aR{}f\st N) \text{ in } \Sh[J]{\slice{\C}{D}}$$
	So that for every $D'\ar{l}D\ar{f}C$ with $f\in S$ it holds that $l\st \alpha_f=\alpha_{f\c l}$. We produce a natural transformation $\lambda\:M\aR{}N$ such that $f\st\lambda=\alpha_f$ for every $(D\ar{f}C)\in S$. Given $(D\ar{f}C)\in S$, we would like to define $\lambda_f\deq {(\alpha_f)}_{\id{D}}$. Let $g\:E\to C$ in $\C$. We define $\lambda_g$ to be the composite
	$$M(g)\iso \Match[M]{g\st S} \ar{[\lambda_g]}\Match[N]{g\st S}\iso N(g)$$
	where $[\lambda_g]$ sends a matching family $(h\in g\st S)\am{m} (x_h\in M(g\c h))$ to the matching family $(h\in g\st S)\mto ((\alpha_{g\c h})_{\id{}}(x_h)\in N(g\c h))$. The latter is indeed a matching family by naturality of $\alpha_{g\c h}$. It is then straightforward to prove that $\lambda$ is natural. Given $(D\ar{f}C)\in S$, we have that $\lambda_f={(\alpha_f)}_{\id{D}}$, since $\id{D}\in f\st S$ and hence the amalgamation of any matching family on $f\st S$ is just the datum on $\id{D}$. So $f\st \lambda=\alpha_f$. Indeed for every $l\:D'\to D$ in $\C$
	$$(f\st \lambda)_l=\lambda_{f\c l}={(\alpha_{f\c l})}_{\id{E}}={(l\st \alpha_f)}_{\id{E}}={(\alpha_f)}_l.$$
	
	It remains to prove that we have the gluing of objects. So consider a descent datum
	$$(D\ar{f} C)\in S \h[3]\mto \h[3] M_f\in\Sh[J]{\slice{\C}{D}}$$
	with $\phi^{f,h}\:h\st M_f\iso M_{f\c h}$ such that the cocycle condition
	\begin{cd}[5][5]
		k\st h\st M_f \arrow[d,"", iso] \arrow[r,"k\st \phi^{f,h}"] \&   k\st M_{f\c h} \arrow[d,"\phi^{f\c h,k}"] \\
		(h\c k)\st M_f \arrow[r,"\phi^{f,h\c k}"']\& {M_{f\c h\c k}.}
	\end{cd}
	holds for every $D''\ar{k} D' \ar{h}  D\ar{f} C$ with $f\in S$. We produce $M\in \Sh[J]{\slice{\C}{C}}$ and for every $(D\ar{f}C)\in S$ isomorphisms $\psi^f\:f\st M\iso M_f$ such that
	\begin{cd}[5][5]
		h\st f\st M \arrow[d,"",iso] \arrow[r,"h\st \psi^{f}"] \&   {h\st M_f} \arrow[d,"\phi^{f,h}"] \\
		(f\c h)\st M\arrow[r,"\psi^{f\c h}"']\& {M_{f\c h}.}
	\end{cd}
	for every $D'\ar{h} D \ar{f} C$ with $f\in S$. We construct the presheaf \v[1]
	\begin{fun}
		Z & \: & {\left(\slice{\C}{C}\right)}\op & \too & \Set \\[1.3ex]
		&&\parbox{2.05cm}{\trslice*+[3.7][3.7]{D}{C}{D'}{f\in S}{h}{f'}}&\mtoo&\h[-3]\fib[n][2.7]{M_f(\id{D})}{Z(\underline{h})}{M_{f'}(\id{D'})}\\[1.3ex]
		&&\p{D\ar{g\notin S}C}&\mtoo& \hphantom{C}\emp
	\end{fun}
	where $Z(\underline{h})$ is the composite
	$$M_f(\id{D})\ar{M_f(\underline{h})} M_f(h)=(h\st M_f)(\id{D'})\aisoo[\phi^{f,h}_{\id{D'}}] M_{f\c h}(\id{D'}).\v[-1]$$
	$Z$ is indeed a functor, by the cocycle condition. Moreover, it is straightforward to show that $f\st Z\iso M_f$ for every $(D\ar{f}C)\in S$. However, $Z$ is not a sheaf. So we define $M\deq Z^{++}$, where $Z^+$ is the plus construction of $Z$ and hence $Z^{++}$ is the sheafification of $Z$. It is straightforward to check that $(f\st Z)^+\iso f\st (Z^+)$, by the explicit plus construction. Thus, using that $f\st Z\iso M_f$, we define $\psi^f$ to be the composite
	$$f\st (Z^{++})\iso (f\st Z^+)^+\iso (f\st Z)^{++}\iso (M_f)^{++}\iso M_f,$$
	where the last isomorphism is given by the fact that $M_f$ is a sheaf. It is then straightforward to show that the isomorphisms $\psi^f$ satisfy the required condition.
\end{proof}

\begin{rem}\label{remrepresentablesinstacks}
	Representables form a fully faithful dense generator $\yy\:\C\to \St[J]{\C}$ of the kind described in \conx\ref{consdenseoffullsubcat}. We want to apply \thex\ref{teorfactorization} on such a dense generator. So we need to factorize the characteristic morphisms in $\m{\C\op}{\Cat}$ of discrete opfibrations with small fibres in $\St[J]{\C}$ over representables.
\end{rem}

\begin{prop}\label{propfactorizationofcharmorinstacks}
	For every $\psi\:H\to \y{C}$ a discrete opfibration in $\St[J]{\C}$ with small fibres, with $C\in \C$, every characteristic morphism of $i(\psi)$ with respect to $\t{\omega}$ factors through $\ell\:i(\O_J)\ffto \t{\O}$.
\end{prop}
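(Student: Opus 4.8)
The plan is to reduce the factorization statement to a concrete fact about discrete opfibrations over representables in $\St[J]{\C}$ and then prove that fact by hand. First I would fix a characteristic morphism $z\:\y{C}\to\t{\O}$ of $i(\psi)$. By the explicit recipe of \remx\ref{remconcreterecipeforclasmorph} (with $\phi=i(\psi)$ and $F=\y{C}$), $z$ is isomorphic to the $2$-natural transformation whose component on $D\in\C$ sends $X\in\y{C}(D)=\HomC{\C}{D}{C}$ to the presheaf on $\slice{\C}{D}$ given by $(E\ar{f}D)\mto{(\psi_E)}_{X\c f}$ and $(f\al{g}f\c g)\mto H(g)$. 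Since $\ell\:i(\O_J)\ffto\t{\O}$ is chronic, hence fully faithful, the functor $\ell\c-$ is fully faithful on each hom-category and so has essential image closed under isomorphism; thus it suffices to show that this representative of $z$ factors through $\ell$, i.e.\ that each presheaf $z_D(X)$ is a sheaf on $\slice{\C}{D}$ for the induced topology $J$. Using \prox\ref{propstrictification} one checks that $z_D(X)=j_D^{-1}(\y{X}\st\psi)$, where $\y{X}\st\psi$ is the pullback of $\psi$ along $\y{X}\:\y{D}\to\y{C}$; by \remx\ref{remhavingsmallfibresispbstable}, \prox\ref{proplimitsinstacks} and \prox\ref{propdiscopfibinstacks} this pullback is again a discrete opfibration in $\St[J]{\C}$ with small fibres. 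So the whole statement reduces to: for every discrete opfibration $\psi\:H\to\y{C}$ in $\St[J]{\C}$ with small fibres, the presheaf $j_C^{-1}(\psi)\:(D\ar{f}C)\mto{(\psi_D)}_{f}$ is a sheaf on $\slice{\C}{C}$.

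To prove this I would first observe that $H$ is valued in discrete categories: by \prox\ref{propdiscopfibinpresheaves} each $\psi_D\:H(D)\to\y{C}(D)$ is a discrete opfibration in $\Cat$, and $\y{C}(D)=\HomC{\C}{D}{C}$ is a discrete category, while a discrete opfibration over a discrete category has discrete domain (the unique lifting of an identity is an identity). Hence $H$ is in fact a $\Set$-valued presheaf, and a stack valued in discrete categories is a sheaf: conditions $(ii)$ and $(iii)$ of \defx\ref{defstack} are vacuous, condition $(i)$ supplies amalgamations of matching families (all coherence isomorphisms being forced to be identities), and uniqueness of amalgamations follows from $(i)$ together with $(ii)$. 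So $H\in\Sh[J]{\C}$; moreover $\y{C}\in\Sh[J]{\C}$ because $J$ is subcanonical, so $\psi$ is a morphism of sheaves of sets.

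Next I would identify $j_C^{-1}(\psi)$ with the object of $\Sh[J]{\slice{\C}{C}}$ corresponding to the morphism of sheaves $\psi\:H\to\y{C}$ under the standard equivalence $\Sh[J]{\C}/\y{C}\simeq\Sh[J]{\slice{\C}{C}}$; to keep the argument self-contained I would instead verify sheafhood directly. A covering sieve on an object $(D_0\ar{f_0}C)$ of $\slice{\C}{C}$ is exactly a covering sieve $S\in J(D_0)$, and, unravelling \prox\ref{propstrictification}, a matching family for $j_C^{-1}(\psi)$ over it is a matching family $(x_h\in H(E))_{h\:E\to D_0\text{ in }S}$ for the sheaf $H$ subject to the extra condition $\psi_E(x_h)=f_0\c h$. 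Its unique $H$-amalgamation $x\in H(D_0)$ then satisfies $\psi_{D_0}(x)\c h=f_0\c h$ for all $h\in S$, hence $\psi_{D_0}(x)=f_0$ because $\y{C}$ is a sheaf; so $x$ lies in the fibre ${(\psi_{D_0})}_{f_0}=j_C^{-1}(\psi)(f_0)$ and amalgamates the matching family, uniqueness descending from $H$. This establishes the claim, and with it the proposition.

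The step I expect to be the main obstacle is the middle one: extracting from the bare gluing conditions of \defx\ref{defstack} the assertion that a stack valued in discrete categories is a sheaf, and, relatedly, using the hypotheses in the right places — that $\psi$ is a discrete opfibration (to force $H$ to be discrete‑valued and its fibres to be plain sets) and that $J$ is subcanonical (to pin down $\psi_{D_0}(x)=f_0$ on a cover). The rest is bookkeeping: the identification $z_D(X)=j_D^{-1}(\y{X}\st\psi)$, the stability of ``discrete opfibration in $\St[J]{\C}$ with small fibres'' under pullback along $\y{X}$, and the passage from ``the explicit $z$ factors through $\ell$'' to ``every characteristic morphism factors through $\ell$'', which only uses that $\ell\c-$ is fully faithful.
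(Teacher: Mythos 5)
Your proposal is correct and follows essentially the same route as the paper: reduce via the explicit recipe of \remx\ref{remconcreterecipeforclasmorph} to showing that the presheaf $(D\ar{f}C)\mto(\psi_D)_f$ is a sheaf, observe that $H$ is $\Set$-valued (being a discrete opfibration over a representable) and hence, being a stack, a sheaf, then amalgamate the matching family in $H$ and use separatedness of $\y{C}$ (subcanonicity) to check the amalgamation lies in the correct fibre. The only differences are cosmetic: you verify every component $z_D(X)$ via pullback-stability where the paper invokes Yoneda to check only $z_C(\id{C})$, and you spell out the ``discrete-valued stack is a sheaf'' step that the paper merely asserts.
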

\begin{proof}
	It suffices to prove that the characteristic morphism $z$ for $i(\phi)$ with respect to $\t{\omega}$ produced in \remx\ref{remconcreterecipeforclasmorph} (and \thex\ref{teor2classinprestacks}) factors through $\ell$. Indeed such factorization only depends on the isomorphism class of $z$, by the Yoneda lemma, as any presheaf isomorphic to a sheaf is a sheaf. By \remx\ref{remconcreterecipeforclasmorph}, $z$ is the 2-natural transformation $\y{C}\to \t{\O}$ that corresponds with the functor
	\begin{fun}
		z_C(\id{C}) & \: & {\left(\slice{\C}{C}\right)}\op & \too & \Set \\[1ex]
		&& (D\ar{f}C) & \mto & {\left(\phi_D\right)}_{f}\\[0.6ex]
		&&(f \al{g} f\c g)&\mto& H(g)\v
	\end{fun}
	So it suffices to prove that such functor is a sheaf. Let $f\:D\to C$ in \C and $R$ a covering sieve on $f\:D\to C$, i.e.\ $R\in J(D)$. Consider then a matching family
	$$\trslicematchN{D'}{D}{C}{g}{}{f}{4}{4} \!\in R \quad \amm{m} \quad X_g\in {(\phi_{D'})}_{f\c g}$$
	on $R$ for $z_C(\id{C})$. We need to show that there is a unique $X\in {(\phi_D)}_f$ such that $H(g)(X)=X_g$ for every $\p{D'\ar{g}D}\in R$. Notice that $m$ is also a matching family on $R\in J(D)$ for $H$, as ${(\phi_{D'})}_{f\c g}\subseteq H(D')$ and the action of $z_C(\id{C})$ on morphisms is given by the action of $H$. Since $\y{C}\:\C\op\to \Set$, also $H\:\C\op\to \Set$. As $H$ is a stack, it then needs to be a sheaf. So there is a unique $X\in H(D)$ such that $H(g)(X)=X_g$ for every $\p{D'\ar{g}D}\in R$. It remains to prove that $\phi_D(X)=f$. Since $\y{C}$ is separated, it suffices to prove that $\phi_D(X)\c g=f\c g$ for every $\p{D'\ar{g}D}\in R$. But by naturality of $\phi$
	$$\phi_D(X)\c g=\phi_{D'}(H(g)(X))=\phi_{D'}(X_g)=f\c g.$$
	We thus conclude that $z_C(\id{C})$ is a sheaf.
\end{proof}

We can now apply \thex\ref{teorfactorization} (based on our theorems of reduction of the study of a 2-classifier to dense generators) to guarantee that we have produced a good 2-classifier in stacks that classifies all discrete opfibrations with small fibres. The following theorem is original.

\begin{teor}\label{teor2classinstacks}
	The 2-natural transformation $\omega_J$ from $1$ to
	\begin{fun}
		{\O_J} & \: & \C\op \hphantom{c}& \too & \hphantom{c}\Cat \\[1.3ex]
		&& C\hphantom{C} & \mto & \Sh[J]{\slice{\C}{C}}\\[0.3ex]
		&& (C\al{f} D) & \mto & -\c {(f\c =)}\op\v[0.5]
	\end{fun}
	that picks the constant at 1 sheaf on every component is a good $2$-classifier in $\St[J]{\C}$ that classifies all discrete opfibrations with small fibres.
\end{teor}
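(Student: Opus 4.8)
The plan is to deduce this theorem as a direct application of the general restriction result \thex\ref{teorfactorization}, applied to the inclusion $i\:\St[J]{\C}\cont \m{\C\op}{\Cat}$, taking there the good $2$-classifier in prestacks $\t{\omega}\:1\to \t{\O}$ (\thex\ref{teor2classinprestacks}) as the starting $\omega$, the property $\opn{P}$ to be that of having small fibres, $\O_\M$ to be the stack $\O_J$, $\ell$ to be the chronic arrow $i(\O_J)\ffto \t{\O}$, and the fully faithful dense generator $I\:\Y\to \M$ to be $\yy\:\C\to \St[J]{\C}$. All the hypotheses of \thex\ref{teorfactorization} have in fact already been assembled in the preceding part of this section, so what remains is to check them off in order.

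First, $i\:\St[J]{\C}\cont \m{\C\op}{\Cat}$ is a nice sub-$2$-category by \remx\ref{remstacksformnicesub} (combining \prox\ref{proplimitsinstacks} and \prox\ref{propdiscopfibinstacks}). Second, $\t{\omega}\:1\to \t{\O}$ is a good $2$-classifier in $\m{\C\op}{\Cat}$ with respect to having small fibres by \thex\ref{teor2classinprestacks}. Third, by \prox\ref{propOJisastack} the slicewise inclusions of sheaves into presheaves assemble into a chronic arrow $\ell\:i(\O_J)\ffto \t{\O}$ in $\m{\C\op}{\Cat}$ (chronicity being componentwise, by \prox\ref{propffinprestacks}), and $\t{\omega}$ factors through $\ell$ because its component on $C$ picks $\id{\y{C}}$, corresponding to the constant at $1$ presheaf $\Delta 1$ on $\slice{\C}{C}$ (\remx\ref{remhofmannstricherawodey}), which is a closed $2$-sieve, i.e.\ a sheaf (\remx\ref{remclosed2sieves}); the resulting $\omega_J\:1\to \O_J$ is exactly the $2$-natural transformation picking the constant at $1$ sheaf on every component, as claimed. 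Fourth, representables give a fully faithful dense generator $\yy\:\C\to \St[J]{\C}$ with $i\c \yy$ a dense generator of $\m{\C\op}{\Cat}$ (\exax\ref{exarepresinprestacks}, \remx\ref{remrepresentablesinstacks}), i.e.\ one of the kind produced by \conx\ref{consdenseoffullsubcat}. Finally, \prox\ref{propfactorizationofcharmorinstacks} establishes the last and crucial hypothesis: for every discrete opfibration $\psi\:H\to \y{C}$ in $\St[J]{\C}$ with small fibres, every characteristic morphism of $i(\psi)$ with respect to $\t{\omega}$ factors through $\ell$.

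With all hypotheses in place, \thex\ref{teorfactorization} applies and yields that $\omega_J\:1\to \O_J$ is a good $2$-classifier in $\St[J]{\C}$ with respect to the property of having small fibres, that is, it classifies all discrete opfibrations with small fibres; this is the assertion of the theorem. Thus the genuine content of the theorem is carried entirely by the preparatory results of this section, and the main obstacles were met there: proving that $\O_J$ is in fact a stack (\prox\ref{propOJisastack}), where the gluing of objects forces a sheafification/plus-construction argument compatible with the descent isomorphisms and the cocycle condition, and verifying the factorization of characteristic morphisms over representables (\prox\ref{propfactorizationofcharmorinstacks}), which reduces to showing that the functor $z_C(\id{C})\:{\left(\slice{\C}{C}\right)}\op\to \Set$ collecting the fibres of a small-fibred discrete opfibration in stacks is itself a sheaf, using that its domain is a sheaf and that $J$ is subcanonical. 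I do not expect any residual difficulty at the level of this theorem itself: in particular, the normalization step required by \thex\ref{esssurj} does not need to be performed here, since it is handled internally in the proof of \thex\ref{teorfactorization} by transporting the already-normalized characteristic morphisms from prestacks along $\ell$.
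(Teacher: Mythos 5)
Your proposal is correct and follows exactly the paper's own argument: the theorem is proved by applying \thex\ref{teorfactorization} to the inclusion $i\:\St[J]{\C}\cont \m{\C\op}{\Cat}$, with the hypotheses supplied by \remx\ref{remstacksformnicesub}, \thex\ref{teor2classinprestacks}, \prox\ref{propOJisastack}, \remx\ref{remrepresentablesinstacks} and \prox\ref{propfactorizationofcharmorinstacks}. Your additional remarks on the factorization of $\t{\omega}$ through $\ell$ and on the normalization being absorbed into \thex\ref{teorfactorization} are accurate and consistent with the paper.
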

\begin{proof}
	By \thex\ref{teorfactorization}, the restriction $\omega_J$ of the good 2-classifier $\t{\omega}$ in $\m{\C\op}{\Cat}$ along $i\:\St[J]{\C}\ffto \m{\C\op}{\Cat}$ is a good 2-classifier in $\St[J]{\C}$ with respect to the property of having small fibres. We can apply \thex\ref{teorfactorization} thanks to \remx\ref{remstacksformnicesub}, \thex\ref{teor2classinprestacks}, \prox\ref{propOJisastack}, \remx\ref{remrepresentablesinstacks} and \prox\ref{propfactorizationofcharmorinstacks}.
\end{proof}

\begin{rem}
	We can extract from \thex\ref{teorfactorization} and \remx\ref{remconcreterecipeforclasmorph} a recipe for the characteristic morphism $z_J\:F\to \O_J$ of a discrete opfibration $\phi\:G\to F$ in $\St[J]{\C}$ with small fibres.
	\sq[l][6][6][\h[-3]\v[2]\opn{comma}]{G}{1}{F}{\O_J}{}{\phi}{\omega_J}{z_J}
	Recall that we denote as $i$ the inclusion $\St[J]{\C}\cont \m{\C\op}{\Cat}$. We obtain that $z_J$ corresponds to the 2-natural transformation $i(z_J)\:i(F)\to i(\O_J)$ whose component on $C\in \C$ is the functor $i(z_J)_C$ that sends $X\in i(F)(C)$ to the sheaf
	\begin{fun}
		i(z_J)_C(X) & \: & {\left(\slice{\C}{C}\right)}\op & \too & \Set \\[1ex]
		&& (D\ar{f}C) & \mto & {\left(i(\phi)_D\right)}_{i(F)(f)(X)}\\[0.6ex]
		&&(f \al{g} f\c g)&\mto& i(G)(g)\\[0.275ex]
	\end{fun}
\end{rem}

\subsection*{Acknowledgements}

I sincerely thank my PhD supervisor Nicola Gambino for the numerous helpful discussions on the subject of this paper and useful comments on earlier versions of this work. I would also like to thank the anonymous referees for their useful suggestions. Part of this research has been conducted while visiting the University of Manchester.

\bibliographystyle{abbrv}
\bibliography{Bibliography}

\end{document}